\documentclass[12pt, notitlepage]{amsart}
\usepackage{latexsym, amsfonts, amsmath, amssymb, amsthm, cite}
\usepackage{graphicx}
\usepackage[linktocpage=true,colorlinks,citecolor=blue,linkcolor=blue,urlcolor=blue]{hyperref}
\usepackage{amssymb}
\usepackage{cite}
\usepackage{bbm}
\usepackage{amsmath}
\usepackage{latexsym}
\usepackage{amscd}
\usepackage{tikz}
\usepackage{amsthm}
\usepackage{mathrsfs}
\usepackage{url}
\usepackage{bbm}
\usepackage[utf8]{inputenc}
\usepackage[english]{babel}
\usepackage{amsfonts}
\usepackage{mathtools}
\usepackage{comment}
\usepackage[autostyle]{csquotes}
\usepackage[colorinlistoftodos]{todonotes}
\usepackage[noabbrev,capitalize]{cleveref}
\usepackage{adjustbox}
\usepackage{mathrsfs}
\crefname{equation}{}{}
\usepackage{graphicx, tikz}
\usepackage[margin=1in]{geometry}
\usepackage{microtype}

\numberwithin{equation}{section}
\def\R{\mathbb R}
\def\Z{\mathbb Z}
\def\C{\mathbb C}

\def\E{\mathbb E}

\def\CF{\mathcal F}

\def\wt{\widetilde}
\def\gcd{\operatorname{gcd}}

\newcommand{\eps}{\varepsilon}
\newcommand{\str}{\operatorname{str}}
\newcommand{\sml}{\operatorname{sml}}
\newcommand{\unf}{\operatorname{unf}}
\newcommand{\vol}{\operatorname{vol}}
\newcommand{\lcm}{\operatorname{lcm}}
\newcommand{\Mod}[1]{\ (\mathrm{mod}\ #1)}

\newtheorem{theorem}{Theorem}[section]
\newtheorem{lemma}[theorem]{Lemma}
\newtheorem{proposition}[theorem]{Proposition}

\newtheorem{conjecture}[theorem]{Conjecture}

\theoremstyle{remark}
\newtheorem{remark}[theorem]{Remark}

\theoremstyle{definition}
\newtheorem{definition}[theorem]{Definition}

\theoremstyle{remark}

\numberwithin{equation}{section}

\begin{document}
\title{Maximising the number of solutions to linear equations}
\author[Ran\dj elovi\'c]{\v{Z}arko Ran\dj elovi\'c}
\address{Mathematical Institute of the Serbian Academy of Sciences and Arts
Kneza Mihaila 36
Belgrade 11000, Serbia}
\email{zarko.randjelovic@turing.mi.sanu.ac.rs}

\author[Shao]{Xuancheng Shao} 
\address{Department of Mathematics, University of Kentucky, Lexington, KY 40506, USA}
\email{xuancheng.shao@uky.edu}

\author[Xu]{Max Wenqiang Xu}
\address{Yau Mathematical Sciences Center, Tsinghua University, Beijing, 100084, China}
\address{Beijing Institute of Mathematical Sciences and Applications, Beijing, 101408, China}
\email{maxxu1729@gmail.com, maxxu@tsinghua.edu.cn}

\author[Zhang]{Shengtong Zhang}
\address{Department of Mathematics, Stanford University, Stanford, CA.}
\email{stzh1555@stanford.edu}

\author[Zhou]{Yuan Zhou}
\address{Department of Mathematics, University of Kentucky, Lexington, KY 40506, USA}
\email{yuan.zhou@uky.edu}

\begin{abstract}
    We study the asymptotically maximal possible number of integer solutions to the linear equation $ax+by+cz = 0$ with a fixed choice of $a, b, c \in \mathbb{Z}$ and variables $x, y, z \in S$ for some finite set $S\subset \mathbb{Z}$, as $|S|\to +\infty$. Define $\gamma_{a, b, c}$ to be the largest constant for which there are arbitrary large finite sets $S\subset \mathbb{Z}$ such that the number of solutions to $ax+by+cz=0$ with $x,y,z\in S$ is $\gamma_{a,b,c}|S|^2-o(|S|^2)$. We prove structural results for general $a, b, c$ and moreover, we show that $5/13\le \gamma_{1,1,-3}\le 1/2-\delta$ for some constant $\delta>0$. In addition we show that the limit as $a \rightarrow \infty$ of $\gamma_{1,1,-a}$ is equal to precisely $1/5$. 
    
\end{abstract}

\maketitle
\section{Introduction}
Counting solutions to a given linear equation $\sum_{1\le i \le d} a_i x_i = 0$, where each variable $x_i$ is in a given set $S$, is a simply stated question. In this paper, we study the problem of finding an optimal $S$ such that the number of solutions is maximized. In particular we focus on the case with three variables.  

We count the solutions to the equation $$ax+by +cz = 0$$ with $a, b, c \in \mathbb{Z}$ and $x, y, z \in S$ for some finite set $S\subset \Z$. Our goal is to understand what is the maximal number of solutions one can have among all sets $S$ with a given size $|S|=N$ and with $a,b,c$ fixed. We define 
$$
T_{a,b,c}(S) :=   \#\{(x,y,z)\in S^3:ax+by+cz=0\},
$$
and
\begin{equation}
    \gamma_{a, b, c}: = \limsup_{|S|\to +\infty}\frac{ T_{a,b,c}(S)} {|S|^{2}}.
\end{equation}
Perhaps it is surprising that the limit $\gamma_{a, b, c}$ is known only for a few $(a,b,c)$.
The first known example is $(a, b, c) = (1,1,-1) $ and it was proved by Hardy and Littlewood \cite{hardy1928notes, hardy1952inequalities} and Gabrie\cite{gabriel1931rearrangement} that $\gamma_{1,1,-1} = \frac{3}{4}$ when $|S|$ is odd, and by Lev \cite{lev1998number} when $|S|$ is even. The second known example is $(1,1,-2)$ which is the same as counting 3-term arithmetic progressions in $S$, and Green and Sisask in \cite{green2008maximal} proved that $\gamma_{1,1,-2} = \frac{1}{2}$. And later it was shown that $\gamma_{1, -1, -2}=1/2$ by \cite{lev2014solving} Lev and Pinchasi. It is easy to see that one can choose a centered interval to obtain the maximal number of solutions in the above cases, i.e. 
\[S= \{-\Big\lfloor \frac{N}{2}\Big\rfloor, \dots,  \Big\lfloor \frac{N}{2} \Big\rfloor \}.\]
Aaronson~\cite{Aa} showed that $\gamma_{a,b,c} \geq 1/12$ for nonzero $a,b,c$ and that $1/12$ is the best possible
universal constant. Nevertheless, determining $\gamma_{a,b,c}$ for a
specific fixed triple remains difficult even for very small coefficients.

The first goal of our paper is to make some progress in determining the next non-obvious case $\gamma_{1,1,-3}$. 

\begin{theorem}\label{THM: 113}
    Let $\gamma_{1,1,-3}$ be defined as above. Then there exists some $\delta>0$ such that 
    \[\frac{5}{13} \le \gamma_{1,1,-3} \le \frac{1}{2}-\delta.\]
\end{theorem}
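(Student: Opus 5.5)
For the lower bound $\gamma_{1,1,-3}\ge 5/13$ I plan an explicit construction, analysed in the continuous limit. A centered interval alone only gives $1/3$. In the normalisation $S\approx N\cdot[-\tfrac12,\tfrac12]$, the number of pairs $(x,y)\in S^2$ with $x+y=3z$ is about $N(1-3|z|/N)$ for $|z|\le N/3$. Integrating gives $2\int_0^{1/3}(1-3t)\,dt=1/3$.

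The improvement should come from adding elements that are cheap as $z$'s but rich as $x,y$'s. The family I would take is $S=S_1\cup S_2$. Here $S_1$ consists of all integers in $[-\alpha N,\alpha N]$. The set $S_2$ consists of multiples of $3$ (or, more generally, a sparser progression) in a longer range $[-\beta N,\beta N]$, with $2\alpha+\tfrac{2}{3}(\beta-\alpha)=1$ after removing overlap. I may also iterate this one or two levels, adding a still sparser layer.

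Writing $f$ for the limiting density profile of $S$, one has
\[
T(S)\approx N^2\int f(z)\,(f*f)(3z)\,dz,
\]
corrected by the local residue-class weights mod $3$. This becomes a piecewise-polynomial function of $(\alpha,\beta)$, and I would maximise it by hand. The aim is to find the parameters for which the value is exactly $5/13$. If the two-layer family falls short, I would switch to a residue-weighted density optimisation: density $1$ near $0$ and density $\tfrac13$ on multiples of $3$ further out, with the cutoffs chosen optimally.

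For the upper bound I would first prove $\gamma_{1,1,-3}\le 1/2$, and then upgrade it to $1/2-\delta$ by a stability argument. Write $T(S)=\sum_{z\in S} r_{S+S}(3z)$. Every solution has $x+y\equiv 0\pmod 3$. Splitting $S$ into residue classes mod $3$ with proportions $p_0,p_1,p_2$, the number of admissible pairs is $N^2(p_0^2+2p_1p_2)$. Combined with the fact that each $z$ is used at most once, this should give a bound of the form
\[
T\le \sum_{z}\min\bigl(r(3z),\,\cdot\bigr),
\]
to which a Pollard/Lev--Pinchasi type rearrangement inequality applies. This is the same mechanism that yields $1/2$ for $\gamma_{1,-1,-2}$. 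If $S$ is concentrated in a single residue class, I would rescale (replace $S$ by $(S-r)/3$, noting $x+y=3z$ is preserved up to translation) and induct. So the genuine case is when $S$ is spread out mod $3$.

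The main obstacle is the stability step: showing that $T(S)\ge(1/2-\delta)N^2$ forces a structure that is incompatible with the equation. The plan is to extract the equality cases of the $1/2$ argument. Near-equality in the rearrangement/Pollard step should force $S$ to be close to an arithmetic progression, via a Freiman-type inverse theorem for sets with many popular sums in the spirit of Green--Sisask. Once $S$ is close to a progression, a direct computation gives about $1/3+o(1)<1/2$, which is a contradiction.

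The delicate part is making this quantitative across the residue classes and the rescaling induction. I would handle it by a compactness argument on limiting density profiles: a $\delta$ exists by contradiction, without needing an explicit value. The first thing I would try is to show that any limiting profile achieving $1/2$ must satisfy equality in both the mod-$3$ count and the rearrangement inequality simultaneously, and that these two equality conditions are incompatible.
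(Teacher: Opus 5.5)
Your lower-bound family is the paper's construction, so that half is fine once the computation is done. All integers in $[-\alpha N,\alpha N]$ together with the multiples of $3$ in $[-\beta N,\beta N]$ is the set of Lemma~\ref{lem:local-sets} with $q=3$ and $(g(0),g(1),g(2))=(9/13,2/13,2/13)$. In your normalisation this is $\alpha=3/13$, $\beta=27/26$, and the count is exactly $5/13$.

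The upper bound has a genuine gap, and even the $1/2$ step is not actually established. Rearrangement applied to $\sum_{z\in S} r_{S+S}(3z)$ gives only $3/4$. The mod-$3$ count $p_0^2+2p_1p_2$ is close to $1$ when $p_0\approx 1$. The unspecified bound $\sum_z\min(r(3z),\cdot)$ does not combine the two. Your dichotomy ``one residue class (rescale)'' versus ``spread out mod $3$'' misses exactly the regime $|S_0|=(1-\eta)|S|$ with $\eta$ small: there you cannot rescale, and nothing is gained at the top level. (Also, $x+y=3z$ is not translation invariant, so $(S-r)/3$ is not a legal move. It is harmless only because $S\subset r+3\Z$ with $r\not\equiv 0$ has no solutions at all.) What works is to write $T(S,S,3S)=2T(S_1,S_2,3S)+T(S_0,S_0,3(S_1\cup S_2))+T(S_0,S_0,3S_0)$ and apply induction to the last term, which equals $T(S',S',3S')$ with $S'=\frac13 S_0$.

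The stability step is where the proposal actually fails, because sets with many solutions need not be close to progressions. Your own $5/13$ set has small doubling and is exactly what a Freiman/Green--Sisask-type inverse theorem outputs. Yet it is far from any single progression and beats the progression value $1/3$. So the chain ``structured $\Rightarrow$ close to an AP $\Rightarrow$ about $1/3$'' breaks at its first link.

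What structure theory really yields is Theorem~\ref{thm:main}: $\gamma_{1,1,-3}=\sup_k\gamma_{1,1,-3}(3^k)$, a profile optimisation mod $3^k$ with $k$ unbounded. Bounding this uniformly in $k$ is the whole problem. Your compactness argument over ``limiting density profiles'' does not address it, because you give no compact space of limit objects that captures all $3$-adic scales at once. A set can be mostly in $3\Z$, with $\frac13 S_0$ mostly in $3\Z$, and so on, so that the deficit at each individual scale is tiny. The ``equality conditions'' you want to show are incompatible are also never defined.

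The paper handles this multi-scale issue by induction on $|S|$:
\begin{itemize}
\item The Lev--Pinchasi bound (Lemma~\ref{lemma ABC}) on the middle term, together with the mod-$9$ estimate $T(A,B,3(A\cup B))\le\frac{6}{25}(|A|+|B|)^2+2$ (Lemma~\ref{lemma ABAcupB}), gives a definite gain whenever $|S_1|+|S_2|\ge\frac13|S|$.
\item Otherwise, either $T(S_0,S_0,3(S_1\cup S_2))\le\frac23|S_0|(|S_1|+|S_2|)$, which is a gain at this scale, or $\frac13 S_0$ satisfies $|S'_1|+|S'_2|\ge\frac13|S'|$, which is a gain one scale down.
\end{itemize}
This ``gain here or at the next scale'' mechanism is what your proposal is missing.
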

 Note that the lower bound is strictly bigger than $1/3$, which is what the interval example would give. This shows that the centered interval is no longer the optimal choice. Our upper bound is certainly not optimal, but it breaks the natural barrier of $1/2$. We conjecture that the lower bound is sharp.  
\begin{conjecture}\label{conj}
    Let $\gamma_{1,1,-3}$ be defined as above. Then 
    \[\gamma_{1,1,-3} = \frac{5}{13}. \]
\end{conjecture}
The main result of our paper is the following structural result, Theorem \ref{thm:main}. Roughly speaking, it gives finite-dimensional description of $\gamma_{a_1,a_2,a_3}$ and therefore determines the structure of optimal sets. 
\begin{definition}\label{defn:G}
For $\alpha,\beta,\gamma \geq 0$, define $G(\alpha,\beta,\gamma)$ to be the are of the region
$$
\{(x,y) \in \R^2: |x| \leq \alpha/2, |y| \leq \beta/2, |x+y| \leq \gamma/2\}.
$$
\end{definition}

It can be explicitly computed: $G$ is symmetric in all three variables, and in the case $\alpha \leq \beta \leq \gamma$ we have
$$
G(\alpha,\beta,\gamma) = \begin{cases} \alpha\beta & \text{if }\alpha+\beta \leq \gamma \\ \alpha\beta - \frac{1}{4}(\alpha+\beta-\gamma)^2 & \text{if }\alpha+\beta\geq\gamma. \end{cases}
$$

\begin{theorem}\label{thm:main}
Let $a_1,a_2,a_3$ be fixed nonzero integers with $\gcd(a_1,a_2,a_3) = 1$. For a positive integer $q$ and a function $g: \Z/q\Z \rightarrow [0,1]$ with $\sum_r g(r) = 1$, define
$$
\gamma_{a_1,a_2,a_3}(g) = \frac{1}{|a_1a_2a_3|} \sum_{\substack{r_1,r_2,r_3\Mod{q} \\ a_1r_1+a_2r_2+a_3r_3\equiv 0\Mod{q}}} G(|a_1|g(r_1), |a_2|g(r_2), |a_3|g(r_3)).
$$
Define $\gamma_{a_1,a_2,a_3}(q)$ to be the maximum of $\gamma_{a_1,a_2,a_3}(g)$ over all such functions $g$. Then
$$
\gamma_{a_1,a_2,a_3} = \sup_q \gamma_{a_1,a_2,a_3}(q).
$$
Moreover, in the supremum over $q$ above we may restrict to those $q$ all of whose prime factors divide $a_1a_2a_3$.
\end{theorem}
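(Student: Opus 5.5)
Theorem~\ref{thm:main} asserts an equality, so the plan is to prove two inequalities: the lower bound $\gamma_{a_1,a_2,a_3}\ge \gamma_{a_1,a_2,a_3}(g)$ for every $q$ and $g$, and the matching upper bound.

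\textbf{Lower bound (construction).} Fix $q$ and $g$. For a large parameter $N$, take
$$
S=\bigcup_{r \bmod q}\{x\equiv r \Mod q : |x|\le g(r)qN/2\}.
$$
Each residue class then contributes about $g(r)N$ elements, so $|S|\approx N$.

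For residues $r_1,r_2,r_3$ with $a_1r_1+a_2r_2+a_3r_3\equiv 0 \Mod q$, count solutions of $a_1x_1+a_2x_2+a_3x_3=0$ with $x_i\equiv r_i \Mod q$ and $|x_i|\le g(r_i)qN/2$.
\begin{itemize}
\item Set $u_i=a_ix_i$. The triples $(u_1,u_2)$, with $u_3=-u_1-u_2$, range over a lattice coset in $\Z^2$.
\item This lattice encodes $u_i\equiv a_ir_i \Mod{a_iq}$ for each $i$, together with the one compatibility condition that is automatic from the congruence on the $r_i$.
\item Using $\gcd(a_1,a_2,a_3)=1$, the coset is nonempty and its covolume is $|a_1a_2a_3|q^2$.
\end{itemize}
The constraints are $|u_i|\le |a_i|g(r_i)qN/2$. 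By Definition~\ref{defn:G} the region has area $G(|a_1|g(r_1),|a_2|g(r_2),|a_3|g(r_3))\,q^2N^2$. Dividing by the covolume gives the count $G(\cdots)N^2/|a_1a_2a_3|+o(N^2)$. Summing over admissible residue triples yields $T(S)\ge(\gamma(g)-o(1))|S|^2$.

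The covolume computation is the step where care is needed. I would first verify it for $q=1$, where it should give exactly $|a_1a_2a_3|$, using the standard index computation for the subgroup of $\Z^2$ cut out by $a_1\mid u_1$, $a_2\mid u_2$, $a_3\mid u_1+u_2$. This index is $|a_1a_2a_3|$ when the $a_i$ are pairwise coprime. In the non-coprime case, I expect the surplus gcd factors to be exactly compensated by the number of residue triples that are solvable but otherwise look identical. If so, the clean formula survives after summing over residues.

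\textbf{Upper bound (structure).} Given $S$ with $T(S)\ge(\gamma-\epsilon)|S|^2$, I would apply a Freiman-type or arithmetic regularity argument to show that $S$ is, up to $o(|S|)$ elements, efficiently covered by a union of residue classes mod some $q$, each intersected with an interval. The key input is that many solutions force large additive energy. By Balog--Szemer\'edi--Gowers and Freiman, a large portion of $S$ then lies in a generalized arithmetic progression. One further needs that a multidimensional progression can be projected to one dimension, or to a residue class structure, without losing solutions. That projection is essentially Fourier or Bohr-set regularity.

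Within each class I would use a rearrangement inequality, of Riesz--Sobolev or Hardy--Littlewood type, to replace the fibres by centered intervals of the same sizes. Rearrangement only increases the count, and for centered intervals the count is given by the $G$ formula. The total is then at most $\gamma(g)$ for $g(r)$ equal to the relative density of $S$ in class $r$, up to $o(1)$. Components lying at different scales, or far apart, contribute only lower-order cross terms. This is because a solution needs its three elements at compatible scales, and by a convexity argument each such configuration is dominated by a single-scale one.

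\textbf{Reduction of $q$.} To restrict to $q$ whose primes divide $a_1a_2a_3$, write $q=q_1q_2$ with $q_2$ coprime to $a_1a_2a_3$. Given $g$ mod $q$, averaging over the $q_2$-part gives a function $g'$ mod $q_1$, and I would show $\gamma(g')\ge\gamma(g)$. For fixed $q_1$-residues, the map from $q_2$-parts of $(r_1,r_2)$ to the third residue is a bijection, since each $a_i$ is invertible mod $q_2$. The inequality then reduces to a concavity or rearrangement property of $G$ summed over a Latin-square structure, which follows from the one-dimensional Riesz rearrangement inequality again.

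\textbf{Main obstacle.} I expect the structural step of the upper bound to be hardest: showing that an asymptotic extremizer is close to a finite union of residue-class intervals. I would attack it with an arithmetic regularity lemma in the Green--Tao style. That lemma decomposes $1_S=f_{\mathrm{str}}+f_{\mathrm{sml}}+f_{\mathrm{unf}}$, and a counting lemma shows that only $f_{\mathrm{str}}$ contributes. The remaining task is to control $f_{\mathrm{str}}$, which is a nilsequence in general but, for a single linear equation in three variables, only needs to be a Bohr-set or Fourier structured function, and to rearrange it.
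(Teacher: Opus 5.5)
Your lower bound is the paper's Lemma~\ref{lem:local-sets}, and your $q$-reduction is essentially Lemma~\ref{lem:q-divide-a1a2a3} via Lemma~\ref{lem:G-sumbound}. There you should \emph{sum}, not average, over the $q_2$-part, so that the total mass stays $1$. Your hedge about the covolume is unnecessary. Since $\gcd(a_1,\gcd(a_2,a_3))=1$, $x_1$ ranges over the multiples of $q|a_1|\gcd(a_2,a_3)$, and then $x_2$ ranges over one class modulo $q\lcm(a_2,a_3)$. So the covolume is exactly $q^2|a_1a_2a_3|$ whenever $\gcd(a_1,a_2,a_3)=1$, and nothing needs compensating.

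The genuine gap is in the upper bound, and the first failure is the rearrangement step. ``Within each residue class replace the fibre by a centered interval; rearrangement only increases the count'' is not a valid inequality for subsets of $\Z$. Inside a class the equation still has coefficients $a_i$, so the fibre can exploit residues modulo the $a_i$ again. For $q=1$ your claim would give $T_{1,1,-3}(S)\le(\frac{1}{3}+o(1))|S|^2$, contradicting $\gamma_{1,1,-3}\ge\frac{5}{13}$. Rearrangement is only legitimate on the limit object $X=\Z/q\Z\times\R\times(\R/\Z)^d$ produced by regularity. Even there you must handle the torus coordinate, on which multiplication by $a_i$ is $|a_i|^d$-to-one, and ``rearrange it'' does not say how. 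The missing idea is the pushforward $F^{(a)}(y)=\E_{x:\,ax=y}F(x)$. It is still $[0,1]$-valued, has $\int F^{(a)}=|a|\int F$, and satisfies
$$\wt{T}_{a_1,a_2,a_3}(F_1,F_2,F_3)=\frac{1}{|a_1a_2a_3|}\,\wt{T}_{1,1,1}\bigl(F_1^{(a_1)},F_2^{(a_2)},F_3^{(a_3)}\bigr).$$
Then $\wt{T}_{1,1,1}\le G(\delta_1,\delta_2,\delta_3)$ on $\R\times(\R/\Z)^d$ follows by transferring back to $\Z$ (an irrational $\theta$ plus random rounding) and applying the rearrangement inequality for $T_{1,1,1}$ in $\Z$. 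This is exactly where the weights $|a_i|g(r_i)$ and the factor $1/|a_1a_2a_3|$ come from, and why Bohr structure gains nothing beyond residues mod $q$. Relatedly, what you need is the model inequality $\wt{T}(F)\le\gamma_{a_1,a_2,a_3}(q)\,\delta^2$. You do not need the stability claim that near-extremal $S$ is essentially a union of residue-class intervals, and regularity does not give that claim anyway.

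The second failure is the reduction to the dense case. Regularity only handles dense subsets of an interval, and your reduction of a general, possibly sparse, $S$ to that case does not work as described. Balog--Szemer\'edi--Gowers plus Freiman captures only a positive proportion of $S$. Moreover, components that are ``far apart'' can interact: for $x+y=2z$, the intervals $[0,N]$, $[L,L+N]$, $[2L,2L+N]$ with $L$ huge give $\gg N^2$ mixed solutions. What is needed is Green--Sisask's structure theorem: a full decomposition $S=A_0\cup A_1\cup\dots\cup A_n$ with $n\ll_{\eps} 1$, where
\begin{itemize}
\item each $A_i$ with $i\ge1$ has small doubling;
\item $E(\lambda\cdot A_i,\lambda'\cdot A_j)$ is small for $i\neq j$ and all $\lambda,\lambda'\in\{a_1,a_2,a_3\}$;
\item $A_0$ has small energy against $S$.
\end{itemize}
The argument then closes in three steps:
\begin{itemize}
\item mixed terms are $O(\eps|S|^2)$ by $T_{a_1,a_2,a_3}(A,B,C)\le E(a_1\cdot A,a_2\cdot B)^{1/2}|C|^{1/2}$;
\item each $A_i$ is rectified by a Freiman $(|a_1|+|a_2|+|a_3|)$-isomorphism to a dense subset of an interval;
\item $\sum_i|A_i|^2\le|S|^2$ combines the bounds for the pieces.
\end{itemize}
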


The quantity $\gamma_{a_1,a_2,a_3}(g)$ in the statement appears naturally when considering $T_{a_1,a_2,a_3}(A)$ when $A$ is a balanced interval on each residue class modulo $q$:

\begin{lemma}\label{lem:local-sets}
Let $a_1,a_2,a_3$ be fixed nonzero integers with $\gcd(a_1,a_2,a_3) = 1$. Let $q$ be a positive integer and let $g: \Z/q\Z \rightarrow [0,1]$ be a function with $\sum_r g(r) = 1$. Then for $N$ sufficiently large, the set $A \subset \Z$ defined by
$$
A = \bigcup_{r=1}^q \Big\{n \equiv r\Mod{q}: \Big|\frac{n}{N}\Big| \leq \frac{g(r)}{2}\Big\}
$$
satisfies
$$
\frac{T_{a_1,a_2,a_3}(A)}{|A|^2} \sim \gamma_{a_1,a_2,a_3}(g).
$$
\end{lemma}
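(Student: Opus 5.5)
The plan is a direct lattice-point count. First, $|A|$: for each residue $r$, the set $\{n\equiv r \Mod q : |n|\le g(r)N/2\}$ has $g(r)N/q + O(1)$ elements. Summing over $r$ and using $\sum_r g(r)=1$ gives $|A| = N/q + O(q)$, so $|A|^2 = N^2/q^2 + O(N)$.

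Next, split $T_{a_1,a_2,a_3}(A)$ by residue classes. Write $x=r_1+qu$, $y=r_2+qv$, $z=r_3+qw$, so that
$$
T(A)=\sum_{r_1,r_2,r_3}\#\{(x,y,z): x\equiv r_i \text{ etc.},\ |x|\le g(r_1)N/2,\ldots,\ a_1x+a_2y+a_3z=0\}.
$$
A class triple contributes only if $a_1r_1+a_2r_2+a_3r_3\equiv 0 \Mod q$. For such a triple, the solutions of $a_1x+a_2y+a_3z=0$ with the prescribed residues form a translate of a rank-two sublattice $\Lambda$ of the plane $P=\{a_1x+a_2y+a_3z=0\}$. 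Namely, $\Lambda = q\Z^3\cap P$, and the translate is nonempty precisely because the residue condition holds. Since $\gcd(a_1,a_2,a_3)=1$, the lattice $\Z^3\cap P$ has covolume $\sqrt{a_1^2+a_2^2+a_3^2}$ inside $P$. I also need $[\Z^3\cap P : q\Z^3\cap P]=q^2$; this follows because $\Z^3\cap P$ is a primitive (saturated) sublattice, so $q\Z^3\cap P = q(\Z^3\cap P)$. Hence each nonempty translate has covolume $q^2\sqrt{a_1^2+a_2^2+a_3^2}$.

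The count for a fixed triple is then the number of points of this translated lattice in the convex polygon $K=\{(x,y,z)\in P: |x|\le g(r_1)N/2,\ |y|\le g(r_2)N/2,\ |z|\le g(r_3)N/2\}$. Because $K$ is convex with perimeter $O(N)$, standard lattice-point counting gives $\operatorname{area}(K)/\operatorname{covol} + O(N)$, with the implied constant depending on $a_i$ and $q$.

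It remains to compute the area. Project $P$ to the $(x,y)$-plane, which works since $a_3\neq 0$; the area scales by the factor $|a_3|/\sqrt{a_1^2+a_2^2+a_3^2}$. The image of $K$ is $\{|x|\le g(r_1)N/2,\ |y|\le g(r_2)N/2,\ |a_1x+a_2y|\le |a_3|g(r_3)N/2\}$. Now change variables $X=a_1x$, $Y=a_2y$, with Jacobian $1/|a_1a_2|$. The region becomes $\{|X|\le |a_1|g(r_1)N/2,\ |Y|\le|a_2|g(r_2)N/2,\ |X+Y|\le|a_3|g(r_3)N/2\}$, whose area is $N^2G(|a_1|g(r_1),|a_2|g(r_2),|a_3|g(r_3))$ by homogeneity of $G$. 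So
$$
\operatorname{area}(K)=\frac{\sqrt{a_1^2+a_2^2+a_3^2}}{|a_3|}\cdot\frac{N^2}{|a_1a_2|}\,G(\cdots),
$$
and dividing by the covolume gives $\dfrac{N^2}{q^2|a_1a_2a_3|}\,G(\cdots)+O(N)$.

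Summing over the $O(q^3)$ admissible class triples gives $T(A)=\dfrac{N^2}{q^2}\gamma_{a_1,a_2,a_3}(g)+O(N)$. Dividing by $|A|^2$ yields the claim, with error $O(1/N)$.

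The main obstacle is the lattice bookkeeping: the covolume computation, which uses $\gcd=1$ and the saturation argument, and the verification that each admissible residue triple gives exactly one translate of $\Lambda$. Degenerate cases where some $g(r)=0$ are harmless, since then the class is essentially empty and $G=0$.
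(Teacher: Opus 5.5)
Your proof is correct and is essentially the paper's argument: both split by admissible residue triples and count points of a translate of a rank-two lattice in the convex polygon of area $N^2G(|a_1|g(r_1),|a_2|g(r_2),|a_3|g(r_3))$. The paper differs only in bookkeeping: it substitutes $x_i=a_in_i$ and computes the covolume $q^2|a_1a_2a_3|$ of an explicit congruence lattice in $\Z^2$ by a gcd/lcm argument, whereas you work in the plane $a_1x+a_2y+a_3z=0$ using the covolume $\sqrt{a_1^2+a_2^2+a_3^2}$ of $\Z^3\cap P$ together with the index $q^2$. The one step you assert without proof, that each admissible coset is nonempty, is where the paper uses $\gcd(a_1,a_2,a_3)=1$ to lift the residues to an exact integer solution $(r_1^*,r_2^*,r_3^*)$; alternatively, your $q^2$ cosets of $\Lambda$ inject into the $q^2$ admissible residue triples and hence biject.
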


In view of the above lemma, the lower bound $\gamma_{a_1,a_2,a_3} \geq \gamma_{a_1,a_2,a_3}(q)$ holds for every $q$. Theorem \ref{thm:main} says, roughly speaking, that asymptotically extremal values of $\gamma_{a_1,a_2,a_3}$ are achieved  by taking, in each residue class modulo $q$, an interval
centered at the origin whose length is prescribed by $g$.
 
As an application of Theorem \ref{thm:main}, we prove the following:

\begin{theorem}\label{thm:1/5}
Let $a$ be a positive integer. Then
$$
\gamma_{1,1,a} = \frac{1}{5} + O\Big(\frac{\log\log a}{\log a}\Big).
$$
\end{theorem}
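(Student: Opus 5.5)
The plan is to use Theorem~\ref{thm:main} for both bounds. Write the equation as $x+y+az=0$. Then $\gamma_{1,1,a}=\sup_q\gamma_{1,1,a}(q)$, where $q$ ranges over integers whose prime factors divide $a$, and
$$\gamma_{1,1,a}(g)=\frac1a\sum_{\substack{r_1,r_2,r_3\Mod q\\ r_1+r_2+ar_3\equiv 0\Mod q}}G\bigl(g(r_1),g(r_2),a\,g(r_3)\bigr).$$
The basic observation is that $G(\alpha,\beta,a\gamma)$ equals $\alpha\beta$ whenever $\alpha+\beta\le a\gamma$, and always satisfies $G\le\min(\alpha\beta,\ a\alpha\gamma,\ a\beta\gamma)$. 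So a triple of residues contributes roughly $\frac1a\,g(r_1)g(r_2)$ when $g(r_3)$ is not too small, and at most $\min(g(r_1),g(r_2))\,g(r_3)$ in general.

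\emph{Lower bound.} I would build $g$ on $\Z/a^k\Z$ with a multi-scale structure. Put mass $m_j$ on residue classes of exact $a$-adic level $j$, for $j=0,\dots,k$, spread evenly among the classes at that level, with a geometrically decaying profile $m_j\approx c\,\theta^j$. Solutions then come mainly from pairs $x,y$ at level $j$ whose sum is $\equiv 0 \Mod{a^{j+1}}$, together with $z$ at level $j+1$ or lower. The count for one pair of consecutive levels is an explicit quadratic form in $(m_j,m_{j+1})$, coming from the $\alpha\beta$ regime of $G$. Summing over $j$ and optimising over $\theta$ should give $\frac15$ in the limit $k\to\infty$, $a\to\infty$.

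The profile has to be truncated, and there are boundary losses of size $O(1/k)$. There are also losses of size about $k/\log a$, because within each level we may only use classes whose $a$-adic valuation is exactly controlled; this is why $q$ must be a power of $a$, or of the product of the primes dividing $a$. Taking $k\asymp \log a/\log\log a$ balances the two and gives $\frac15-O(\log\log a/\log a)$.

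\emph{Upper bound.} Fix $q$ and $g$. I would sort the residues $r$ by the size of $g(r)$ on a logarithmic scale relative to $a$. Write $r_1,r_2$ for the residues of the two variables with coefficient $1$; for fixed $r_3$, each $r_1$ determines $r_2$. The three bounds on $G$ then reduce $\gamma_{1,1,a}(g)$ to a quadratic-type expression in the level masses $M_j=\sum_{g(r)\in(a^{-j-1},a^{-j}]}g(r)$. The aim is to show that this expression is at most the same quadratic form that appeared in the construction, up to an error.

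The main obstacle is this step: proving the matching inequality. In particular one must rule out gains from configurations where $r_1,r_2$ lie on different scales, or where $g(r_3)$ is comparable to $g(r_1)/a$, so that $G$ is in its second, non-product regime. My first attempt would be a Cauchy--Schwarz bound on $\sum_{r_1+r_2\equiv -ar_3}\min(g(r_1),g(r_2))$, separately for each scale. The residual overlap between adjacent scales would be charged to the $O(\log\log a/\log a)$ error by choosing the scale width to be a power of $\log a$.
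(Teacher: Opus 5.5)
\emph{Lower bound.} Your construction does not give $\tfrac15$. In your level model, the triples you single out (where $x+y$ is divisible by an extra power of $a$) are only a $1/a$-fraction and contribute $O(1/a)$ in total. The main term comes instead from generic pairs $x,y$ at one level, together with $z$ in the more numerous, lighter classes one level down, where $a\,g(z)$ is comparable to $g(x)$. Summing, the value is $\sum_j G(m_j,m_j,m_{j-1})+O(1/a)$. This is exactly the chain functional of Lemma~\ref{lem:optimize}, whose maximum $\tfrac15$ is attained by the two-term chain $(\tfrac35,\tfrac25)$. A geometric profile with fixed ratio gives, as $k\to\infty$, at most $\max_{0<\phi<1}(\phi-\tfrac{\phi^2}{4})\tfrac{1-\phi}{1+\phi}$, roughly $0.155$. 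So sending $k\to\infty$ moves you away from $\tfrac15$. The right example is simply $q=a$, $g(0)=\tfrac35$, and $g(r)=\tfrac{2}{5(a-1)}$ for $r\neq 0$, which gives
\[
\gamma_{1,1,a}\ge\gamma_{1,1,a}(g)\ge\frac{a-1}{a}\,G\Big(\frac35,\frac35,\frac{2a}{5(a-1)}\Big)\ge\frac15-O\Big(\frac1a\Big).
\]
There is no $k/\log a$ loss to balance; the $\log\log a/\log a$ term comes only from the upper bound. The paper leaves this easy direction implicit.

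\emph{Upper bound.} Here the decisive steps are missing. The cross-scale configurations you flag are actually cheap. Triples in which $g(x),g(y),ag(z)$ are not all within a factor $\eps$ of one another contribute $O(\eps)$, directly from $G(\alpha,\beta,\gamma)\le\min(\alpha\beta,\alpha\gamma,\beta\gamma)$. For the boundary effects you need scales of ratio exactly $a$, so that $z$ lies exactly one scale below $x,y$. You also need the shift $\lambda\in[1,a]$ of the boundaries to be chosen by averaging $g(E_\lambda)$ against $d\lambda/\lambda$. With $\eps=1/\log a$ this leaves mass $O(\log\log a/\log a)$ within a factor $\log a$ of a boundary, whereas a fixed partition can have all its mass there. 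What remains is the real difficulty, the sharp per-scale inequality
\[
\frac1a\sum_{\substack{x,y\in X_i',\ z\in X_{i+1}'\\ x+y+az\equiv 0\Mod{q}}} G\bigl(g(x),g(y),ag(z)\bigr)\le G\bigl(g(X_i'),g(X_i'),g(X_{i+1}')\bigr),
\]
followed by Lemma~\ref{lem:optimize}. Your tools cannot produce this. Any argument that only uses $G\le\min(\alpha\beta,\alpha\gamma,\beta\gamma)$, including Cauchy--Schwarz on $\sum\min(g(r_1),g(r_2))$, is capped at $\tfrac14$. Indeed, for $q=a$, $g(0)=\tfrac12$ and $g(r)=\tfrac{1}{2(a-1)}$ for $r\neq 0$, the sum of these minima is already at least $\tfrac14$. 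The correction $-\tfrac14(\alpha+\beta-\gamma)^2$ in $G$ is exactly what lowers $\tfrac14$ to $\tfrac15$, and capturing it requires a rearrangement inequality. The paper obtains the displayed bound (Lemma~\ref{lem:G-sumbound}) by realising the left side, via the sets of Lemma~\ref{lem:local-sets}, as a count of solutions to $x+y=z$ in three sets of integers. It then applies Lev's rearrangement inequality, which says this count is maximised by balanced intervals.
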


For studying $\gamma_{1,1,3}$ or $\gamma_{1,-1,3}$, Theorem \ref{thm:main} reduces to an optimization problem over $\Z/q\Z$, where $q$ is a power of $3$. For $q=3$, the solutions to the corresponding optimization problems are given as follows.

\begin{theorem}\label{thm:gamma113}
We have
$$
\gamma_{1,-1,\pm 3}(3) = \frac{47}{122}, \ \ \gamma_{1,1,\pm 3}(3) = \frac{5}{13}.
$$
\end{theorem}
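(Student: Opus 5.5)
The plan is to make $\gamma_{1,\pm1,\pm3}(g)$ completely explicit on $\Z/3\Z$ and then solve a finite-dimensional, piecewise-quadratic maximisation on the simplex. Write $g=(g_0,g_1,g_2)$ with $g_0+g_1+g_2=1$, $g_i\ge 0$. Since $3r_3\equiv 0 \Mod 3$, the congruence in Theorem \ref{thm:main} no longer involves $r_3$. It reads $r_1+r_2\equiv 0$ for $(1,1,\pm3)$ and $r_1\equiv r_2$ for $(1,-1,\pm3)$. Hence
$$
\gamma_{1,1,\pm3}(g)=\frac13\sum_{t}\Big(G(g_0,g_0,3g_t)+2G(g_1,g_2,3g_t)\Big),
$$
$$
\gamma_{1,-1,\pm3}(g)=\frac13\sum_{r,t}G(g_r,g_r,3g_t).
$$
In particular the sign of the coefficient $3$ is irrelevant, and each of the two claims is a maximisation of an explicit function on a $2$-simplex.

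First I would use symmetry to cut down the domain. The first function is symmetric under $g_1\leftrightarrow g_2$, and the second is fully $S_3$-symmetric, so I may assume $g_1\ge g_2$ in the first case and $g_0\ge g_1\ge g_2$ in the second. Next I would partition the simplex into cells according to which branch of the explicit formula for $G$ applies to each term. For $G(x,x,3y)$, the branch depends on the ordering of $x$ and $3y$ and on the sign of $2x-3y$ or $x+3y-x$. For $G(g_1,g_2,3g_t)$, one compares the smallest two arguments with the largest. All these boundaries are lines, so the cells are polygons and $\gamma(g)$ is a quadratic polynomial on each cell.

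The maximum is then attained either at an interior critical point of some cell's quadratic (found by a Lagrange multiplier computation with the constraint $\sum g_i=1$), at a critical point along an edge, or at a vertex. I would enumerate these candidates and evaluate. For the lower bounds it suffices to exhibit the maximiser. I expect it to come from a cell where several $G$ terms are in the "overlap" branch $\alpha\beta-\frac14(\alpha+\beta-\gamma)^2$, giving rational critical points with denominators $13$ and $122$. I would first try the ansatz $g_1=g_2$ for the $(1,1,3)$ case, and $g_1=g_2$ (two equal classes) for the $(1,-1,3)$ case. This makes the problem one-variable piecewise quadratic, and I would check that it yields $5/13$ and $47/122$ respectively.

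The main obstacle is the upper bound, that is, ruling out all other cells without an unmanageable case explosion. To tame it, I would use the global inequality
$$
G(\alpha,\beta,\gamma)\le \alpha\beta-\tfrac14(\alpha+\beta-\gamma)_+^2,
$$
valid whenever $\gamma$ is the largest argument, together with $G\le \min$ of the pairwise products, to discard cells whose crude upper bound already falls below the target value. Only the few cells adjacent to the conjectured maximiser should survive. On those I would verify directly that the quadratic form restricted to the cell has its maximum at the claimed point, by checking concavity along the constraint plane or by comparing the boundary values.
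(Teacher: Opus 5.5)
Your reduction to an explicit function on the simplex is correct and is the paper's starting point. The one concrete prediction you make, however, is wrong, and it sits exactly where the content of the theorem lies. For $(1,-1,\pm3)$, use $G(\alpha,\alpha,\beta)=\alpha^2-(\alpha-\beta/2)_+^2$ and order $u\le v\le w$. This gives
\[
\gamma_{1,-1,\pm3}(g)=u^2+v^2+w^2-\frac13\Big[\big(v-\tfrac32u\big)_+^2+\big(w-\tfrac32v\big)_+^2+\big(w-\tfrac32u\big)_+^2\Big].
\]
On the locus where two of $u,v,w$ coincide, this never exceeds $5/13$. The branch $u=v\le w$ peaks at $(2/13,2/13,9/13)$ with value $5/13$, and the branch $u\le v=w$ only reaches $5/14$. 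Since $5\cdot122=610<611=47\cdot13$, your planned check of the ansatz would return $5/13$, not $47/122$, so the lower bound fails as proposed.

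The true maximiser is $(7/61,12/61,42/61)$, which has three distinct values. It lies in the interior of the cell $v\ge\frac32u$, $w\ge\frac32v$, where all three penalty terms are active. On that cell the paper writes the quadratic as
\[
\gamma_{1,-1,\pm3}(g)=\frac{47}{122}(u+v+w)^2-\frac{1}{366}\Big(18u-\frac73(v+w)\Big)^2-\frac{1}{27}\Big(w-\frac72v\Big)^2.
\]
This exhibits the value $47/122$ and proves it is the maximum on that cell at the same time. Exhaustive enumeration of critical points would in principle find this point, but your plan is organised around the wrong candidate.

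Your upper-bound strategy also would not work as described. The first of your ``global inequalities'' is an identity when $\gamma$ is the largest argument, so it is not a crude bound at all. The real obstacle is that competing cells come extremely close to the optimum. The maximum over the cell $v\le\frac32u$, $w\ge\frac32v$ is $62/161$, attained at $(20/161,30/161,111/161)$, which is short of $47/122$ by only $3/19642$. The symmetric point above is short by $1/1586$. No crude bound can discard such cells, so each must be optimised essentially exactly. The paper does this in five cases. It uses cell constraints such as $|v-w|\le\frac15(v+w)$ to bound the leftover quadratic sharply. It also uses a mass-shifting argument that pushes the cell where only $(w-\frac32u)_+$ is active onto the main cell.

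For $(1,1,\pm3)$, your ansatz $g_1=g_2$ does hit the true maximiser $(9/13,2/13,2/13)$, but you give no reason why the optimum should be symmetric. The paper supplies one. It bounds the four terms with $x,z\neq0$ by $g_1g_2$, then applies AM--GM, the symmetrisation $G(u,v,w)\le G(\frac{u+v}2,\frac{u+v}2,w)$, and concavity of $w\mapsto G(u,v,w)$. This reduces everything to a piecewise quadratic in $g(0)$ alone, maximised at $g(0)=9/13$.
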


We conjecture that the modulo $3$ construction is already globally optimal; i.e. $\gamma_{1,\pm 1, \pm 3} = \gamma_{1,\pm 1,\pm 3}(3)$ (leading to Conjecture \ref{conj}).

After the completion of this work, Korsky~\cite{Korsky} obtained the
strikingly similar upper bound
$$
    \gamma_{1,2,-3}\leq\frac{47}{122}
$$
for the translation-invariant equation $x+2y=3z$.
It is perhaps not surprising that the constant $47/122$ from Korky's work and from Theorem \ref{thm:gamma113} arise from the same quadratic optimization problem. It would be interesting to understand this connection more systematically, and in
particular whether the recursive ideas in~\cite{Korsky} can be adapted
to resolve the value of $\gamma_{1,-1,\pm 3}$.

\subsection*{Organization of the paper}

In Section \ref{sec:113} we prove the nontrivial upper bound for $\gamma_{1,1,-3}$ in Theorem \ref{THM: 113}.  In Section \ref{sec:local} we prove Lemma \ref{lem:local-sets} connecting $\gamma_{a_1,a_2,a_3}(g)$ with $T_{a_1,a_2,a_3}(A)$. In Section \ref{sec:1/5} we prove the asymptotic behavior of $\gamma_{1,1,a}$ as $a\rightarrow\infty$ in Theorem \ref{thm:1/5}. Sections \ref{sec:pf-outline}--\ref{sec:proof3} contains the proof of Theorem \ref{thm:main}, with an outline in Section \ref{sec:pf-outline}. Its proof uses arithmetic regularity to reduce an arbitrary set to a structured model,  and then studies the resulting optimization problem over $\Z/q\Z \times \R \times (\R/\Z)^d$.  Finally, in Section \ref{sec:47/122} we
study the finite optimization problems modulo $3$ in Theorem~\ref{thm:gamma113}.

\section{Proof of  Theorem~\ref{THM: 113}}\label{sec:113}

The lower bound $\gamma_{1,1,-3} \geq 5/13$ follows from the observation $\gamma_{1,1,-3} \geq \gamma_{1,1,-3}(3)$ (see the discussion following Lemma \ref{lem:local-sets}) combined with Theorem \ref{thm:gamma113}.

For the upper bound, we first give a short proof that $\gamma_{1,1,-3} \leq \frac{1}{2}$. For sets $A, B, C$, let $T(A, B, C)$ denote the number of solutions to $x + y = z$ with $x \in A, y \in B, z \in C$. We only need the trivial bound
$$T(A, B, C) \leq \min(|A||B|, |B||C|, |C||A|).$$
\begin{lemma}
    \label{lem:half-bound}
  For any finite $S\subset \mathbb{Z}$ we have
    \begin{equation}\label{1/2}
  T(S, S, 3 \cdot S) \leq \frac{1}{2} |S|^2 + 1.      
    \end{equation}  
\end{lemma}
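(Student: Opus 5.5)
The natural approach is to show that $x+y=3z$ forces a strong ordering relation between $z$ and the larger of $x,y$. That relation would let us inject solutions into a set of pairs of elements of $S$ of size about $|S|^2/2$. Write $T:=T(S,S,3\cdot S)$, so $T=\#\{(x,y,z)\in S^3: x+y=3z\}$. Write $S=P\cup N\cup Z$ with $P=S\cap(0,\infty)$, $N=S\cap(-\infty,0)$, $Z=S\cap\{0\}$, and set $p=|P|$, $n=|N|$. By the symmetry $S\mapsto -S$, the positive and negative parts can be treated identically.

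\textbf{Step 1 (the trivial part).} The solutions with $z=0$ are the $(x,-x)$ with $x,-x\in S$. Among these, $(0,0,0)$ contributes the ``$+1$'', and the others number at most $2\min(p,n)$. The solutions with $x=y$ have $2x=3z$; for each $z$ there is at most one such $x$, and it has the same sign as $z$. So these diagonal solutions are handled directly.

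\textbf{Step 2 (ordering).} Fix $z>0$ and a solution with $x<y$. Then $y>3z/2>z$, so $y\in P$ and $y>z$. The map sending such a solution to the ordered pair $(z,y)$ with $z<y$, $z,y\in P$ is injective, because $x=3z-y$ is determined. Hence the number of solutions with $z>0$, counted with the factor $2$ for the ordering of $(x,y)$, is at most $2\binom{p}{2}=p(p-1)$. Similarly, solutions with $z<0$ number at most $n(n-1)$. This gives $T\le p^2+n^2+O(|S|)$, which is only of order $|S|^2$ when $p\approx|S|$.

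\textbf{Step 3 (the main obstacle): gaining the factor $1/2$.} Step 2 alone loses when almost all of $S$ has one sign. The plan is to use a second injection for the same solutions with $z>0$: send them to the pair $\{x,z\}$ instead of $\{z,y\}$. Then add the two counts in a weighted way, exploiting that for a given unordered pair $\{u,v\}\subset P$ with $u<v$, the two maps cannot both be ``saturated''. Concretely, $\{u,v\}$ can be the image of a solution with $z=u$, $y=v$ only if $3u-v\in S$. It can be the image of a solution with $x=u$, $z=v$ only if $3v-u\in S$, and then $3v-u>v$.

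I would first try to organise this by listing $P=\{s_1<\dots<s_p\}$ and bounding, for each $z=s_k$, the number of representations $r(3s_k)$ by $\min\bigl(p,\,2(p-k)\bigr)$ from the $y$-side together with an analogous bound coming from the $x$-side. For the larger elements one can also use that both $x,y$ lie above $3s_k-s_p$. The aim is to make $\sum_k r(3s_k)\le p^2/2+O(1)$, and likewise for $N$. Since $p^2+n^2\le(p+n)^2$, combining the halves then gives $T\le\frac12|S|^2+1$.

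The precise bookkeeping in this step is where I expect the difficulty. In particular, the boundary terms have to be checked so that the additive constant is exactly $1$; the diagonal solutions from Step 1 and the solutions with $z=0$ must be absorbed into the slack $\frac12(p^2+n^2)\le\frac12|S|^2-pn$ when both $p$ and $n$ are positive.
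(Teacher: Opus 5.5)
\textbf{There is a genuine gap: Step 3 is a plan rather than a proof, and the inequality it aims at is false in the form your bookkeeping needs.}

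Step 3 is where the factor $\tfrac12$ has to come from. The ``second injection'' and the ``weighted'' combination are never turned into an actual inequality.

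The target itself fails. In Step 2 the third variable $x=3z-y$ ranges over all of $S$, so it can be negative. At the end you reserve the cross slack $pn$ in $\tfrac12(p+n)^2$ only for the $z=0$ and diagonal solutions. So you are in effect claiming that the number of solutions with $z>0$ is at most $\tfrac12p^2+O(1)$. Take $S=\{-M,\dots,M\}$, so $p=M$. Then that number is
\[
\sum_{1\le z\le 2M/3}(2M+1-3z)=\Big(\tfrac23+o(1)\Big)M^2>\tfrac12p^2 .
\]
Hence mixed-sign solutions (such as $x<0<z<y$) must be charged to the $pn$ term, and your argument never does this.

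Suppose instead that $r(3s_k)$ counts only representations from $P\times P$, which your bound $\min(p,2(p-k))$ tacitly assumes. Then the claim $\sum_k r(3s_k)\le\tfrac12p^2+O(1)$ is just the lemma itself for sets of positive integers, so the sign split has reduced nothing. Moreover:
\begin{itemize}
\item the bounds you list give only $\sum_k\min(p,2(p-k))\approx\tfrac34p^2$;
\item the promised $x$-side bound is never formulated;
\item the mixed-sign solutions would still need a separate estimate.
\end{itemize}

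\textbf{The missing idea is to split by residues modulo $3$, not by sign.} Write $S_i=\{s\in S: s\equiv i\pmod 3\}$. Since $x+y\equiv 0\pmod 3$, every solution has $(x,y)\in S_0^2\cup(S_1\times S_2)\cup(S_2\times S_1)$. This gives
\[
T(S,S,3\cdot S)=2T(S_1,S_2,3\cdot S)+T(S_0,S_0,3\cdot(S_1\cup S_2))+T(S_0,S_0,3\cdot S_0).
\]
The three terms are handled as follows.
\begin{itemize}
\item The first two terms are trivially at most $2|S_1||S_2|+|S_0|(|S_1|+|S_2|)$.
\item The last term equals $T(S',S',3\cdot S')$ with $S'=\tfrac13\cdot S_0$. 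After rescaling $S$ so that $S_0\neq S$, this is a strictly smaller set.
\item The AM--GM bound $2|S_1||S_2|\le\tfrac12(|S_1|+|S_2|)^2$ then closes an induction on $|S|$.
\end{itemize}
The $+1$ comes only from the base case.
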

\begin{proof}
    We argue by induction on $|S|$. The base case $|S| = 1$ is trivial. For the induction step, let $S_i$ denote the elements of $S$ congruent to $i$ modulo $3$; Without loss of generality, we can assume $S_0 \neq S$. Then we have
\begin{equation}\label{eqn: iden}
   T(S, S, 3 \cdot S) = 2 T(S_1, S_2, 3 \cdot S) + T(S_0, S_0, 3 \cdot (S_1 \cup S_2)) + T(S_0, S_0, 3 \cdot S_0). 
\end{equation}    
    Using the trivial bound on the first two terms, and the induction hypothesis on the third, we obtain
    $$T(S, S, 3 \cdot S) \leq 2 |S_1| |S_2| + |S_0|(|S_1| + |S_2|) + \frac{1}{2} |S_0|^2 + 1.$$
    By the AM-GM inequality, we conclude that 
    $$T(S, S, 3 \cdot S) \leq \frac{1}{2} (|S_0| + |S_1| + |S_2|)^2 + 1$$
    as desired.
\end{proof}
We now show how an improvement on \eqref{1/2} is possible. This needs the following non-trivial bound (\cite[Lemma 2]{lev2014solving}).
\begin{lemma}
    If $|A| + |C| \geq |B|$ and $|B| + |C| \geq |A|$, then
    \label{lemma ABC}
    $$T(A, B, C) \leq |A||B| - \frac{1}{4} \max(|A| + |B| - |C|, 0)^2 + 1.$$
\end{lemma}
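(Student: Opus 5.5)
The plan is to write $T(A,B,C)$ as a sum of level sets of the representation function and to bound each level set by a Freiman-type inequality. Let $a=|A|$, $b=|B|$, $m=|C|$, and assume without loss of generality that $a\le b$. Put $r(x)=\#\{(u,v)\in A\times B: u+v=x\}$. Then
$$T(A,B,C)=\sum_{x\in C} r(x)=\sum_{k\ge 1} |C\cap L_k|, \qquad L_k:=\{x: r(x)\ge k\}.$$
Since $r(x)\le \min(a,b)=a$, only $k\le a$ contributes. Trivially $|C\cap L_k|\le m$. The key input is the level-set bound
$$|L_k|\le a+b+1-2k \qquad (1\le k\le a),$$
which is exactly what equality gives when $A$ and $B$ are intervals, where $r$ is the trapezoid $1,2,\dots,a,\dots,a,\dots,2,1$. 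Combining the two bounds gives
$$T(A,B,C)\le \sum_{k=1}^{a}\min(m,\,a+b+1-2k).$$

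For the level-set bound, I would try a direct ordering argument first. List $A=\{u_1<\dots<u_a\}$ and $B=\{v_1<\dots<v_b\}$. If $r(x)\ge k$, then $x$ has at least $k$ representations $u_i+v_j$ with distinct $i$ and distinct $j$. Hence some representation has $i\ge k$, and some (possibly different) representation has $j\ge k$. Symmetrically, some representation has $i\le a-k+1$, and some has $j\le b-k+1$.

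I would then show that $L_k$ sits inside a ``middle'' part of a monotone chain of sums of length $a+b-1$, with $k-1$ elements cut from each end. The chain is $u_1+v_1<\dots$, obtained by increasing one index at a time along a staircase path in the $a\times b$ grid. If the direct argument does not close, the fallback is induction on $a+b$. Delete $\max A$ or $\max B$, choosing whichever is larger when added to the other set's maximum. This should lower every value of $r$ by at most $1$ and remove the top element of each nonempty level set, giving $|L_k(A,B)|\le |L_{k-1}(A',B)|+1$ together with the base case $|L_1|=|A+B|\le ab$. Some care is needed here, since the trivial bound $|A+B|\le a+b-1$ is false for $k=1$: really $|L_1|$ is unbounded.

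So the correct statement to aim for is weaker. The claim is not a per-level bound but that $C\cap L_k$ is controlled in aggregate: the sum of the $m$ largest values of $r$ is at most the sum of the $m$ largest values of the trapezoid. This is the Pollard / Lev--Pinchasi rearrangement form. I would prove it by induction on $a+b$, deleting an extreme element of $A$ and tracking how the top-$m$ sum changes. This rearrangement step is the main obstacle.

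Finally, we evaluate the trapezoid sum. The $m$ largest trapezoid values consist of the flat part of height $a$ and length $b-a+1$, plus symmetric shoulders. If $m\ge a+b-1$, the sum is $ab$, which matches the claim since $\max(a+b-m,0)^2/4\le 1/4$ then. Otherwise, set $t=a+b-1-m$. The hypotheses $|A|+|C|\ge|B|$ and $|B|+|C|\ge|A|$ guarantee $t\le 2a-1$, so the removed portion consists only of the lowest shoulder values, about $t/2$ from each side. This gives the sum
$$ab-\sum_{j\le t/2}j\cdot 2 \approx ab-\tfrac14(t+1)^2=ab-\tfrac14(a+b-m)^2.$$
A parity check on $t$ shows the rounding error is at most $1$, which yields the stated bound.
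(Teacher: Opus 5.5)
\paragraph{The gap.} The problem is the step you flag as the main obstacle, and it carries the whole lemma. The sum of the $m$ largest values of $r$ equals $\max_{|C|=m}T(A,B,C)$. So your claim that it is at most the sum of the $m$ largest trapezoid values is just the sharp form of the lemma; once it is granted, the rest is bookkeeping. Neither route you sketch proves it:
\begin{itemize}
\item The per-level bound $|L_k|\le a+b+1-2k$ fails well beyond $k=1$. For a Sidon set $A=B$ one has $|L_2|=\binom{a}{2}>2a-3$ once $a\ge 4$, so no chain argument can rescue it.
\item Deleting $\max A$ alone only shows that the top-$m$ sum grows by at most $\min(m,b)$. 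That overshoots the growth of the trapezoid's top-$m$ sum: for $a=b=m=2$ the induction gives $2+2=4$, not the required $3$.
\end{itemize}
The claim is true. It follows from Pollard's theorem in $\mathbb{Z}$, or from Lev's rearrangement inequality \cite[Theorem 1]{lev1998number}, which the paper uses elsewhere. You could cite it, but as written the argument is incomplete.

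\paragraph{The missing idea.} You need to peel off hooks rather than level sets. This is the step in the induction on $|A|+|B|-|C|$ from \cite[Lemma 2]{lev2014solving} that the paper invokes; the paper only cites that induction, and the description here of how it runs is a reconstruction. Let $\alpha=\max A$ and $\beta=\min B$. The $|A|+|B|-1$ pairs in $(\{\alpha\}\times B)\cup(A\times\{\beta\})$ have pairwise distinct sums, because $x+\beta\le\alpha+\beta\le\alpha+y$ with equality only at $(\alpha,\beta)$. Hence at most $|C|$ of them are solutions. Write $n=|A|+|B|-|C|$.
\begin{itemize}
\item For $n\le 1$ the bound is trivial.
\item For $n\ge 2$ the hypotheses force $A,B\neq\emptyset$. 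Removing $\alpha$ from $A$ and $\beta$ from $B$ lowers $n$ by $2$ and keeps both hypotheses.
\item The induction then closes exactly, since $(|A|-1)(|B|-1)+|C|-\tfrac14(n-2)^2=|A||B|-\tfrac14 n^2$.
\end{itemize}
Equivalently, peeling $a$ successive hooks of sizes $a+b-1,a+b-3,\dots$ gives exactly your displayed inequality $T(A,B,C)\le\sum_{k=1}^{a}\min(m,a+b+1-2k)$. From there your trapezoid evaluation finishes the proof; it correctly uses $a+m\ge b$ to get $t\le 2a-1$ and loses at most $\tfrac14$ to parity.
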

\begin{proof}
    Note that it is enough to consider the case $|C|\le |A|+|B|$. Again the proof is identical to that of (\cite[Lemma 2]{lev2014solving}) using induction on $|A|+|B|-|C|$ and noting that the case where either $A$ or $B$ is empty is trivial.
\end{proof}
First, the trivial bound on the second term in \eqref{eqn: iden} can be improved to
$$T(S_0, S_0, 3 \cdot (S_1 \cup S_2)) \leq |S_0| |S_1 \cup S_2| - \frac{1}{4} |S_1 \cup S_2|^2+1,$$
unless $|S_1 \cup S_2| \geq 2 |S_0|$. In that case, we have an even better improvement
$$T(S_0, S_0, 3 \cdot (S_1 \cup S_2)) \leq |S_0|^2.$$
Thus we already get a win unless either $S_0$ is very small, or $S_1 \cup S_2$ is very small. 

We first deal with the case when $S_0$ is very small. In the most extreme case, $S_0 = \emptyset$, so we want to win over the trivial bound
$$T(S_1, S_2, 3 \cdot (S_1 \cup S_2)) \leq |S_1||S_2| \leq \frac{1}{4} \left(|S_1| + |S_2|\right)^2.$$
We next state two lemmas and defer their proofs to Appendix. 
\begin{lemma}
\label{lemma ABAcupB}
For any set $A$ consisting of integers congruent to $1$ modulo $3$, and $B$ consisting of integers congruent to $2$ modulo $3$, we have 
$$T(A, B, 3 \cdot (A \cup B)) \leq \frac{6}{25} \left(|A| + |B|\right)^2 + 2.$$
\end{lemma}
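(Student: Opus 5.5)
The plan is to refine the decomposition of $T(A,B,3\cdot(A\cup B))$ by working modulo $9$, and then to reduce the lemma to a finite-dimensional quadratic optimization. For $x\in A$, $y\in B$ we always have $x+y\equiv 0\pmod 3$. For $z\in A$ we have $3z\equiv 3\pmod 9$, and for $z\in B$ we have $3z\equiv 6\pmod 9$. So
$$T(A,B,3\cdot(A\cup B)) = T(A,B,3\cdot A)+T(A,B,3\cdot B).$$
Write $A=A_1\cup A_4\cup A_7$ and $B=B_2\cup B_5\cup B_8$ according to residues mod $9$, with sizes $a_i,b_j$, and set $a=|A|$, $b=|B|$. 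A pair $(x,y)\in A_i\times B_j$ has $x+y\equiv i+j\pmod 9$. Hence:
\begin{itemize}
\item the pairs $(1,2),(4,8),(7,5)$ (sum $\equiv 3$) can only contribute to $T(A,B,3\cdot A)$;
\item the pairs $(1,5),(4,2),(7,8)$ (sum $\equiv 6$) can only contribute to $T(A,B,3\cdot B)$;
\item the remaining three pairs (sum $\equiv 0$) contribute nothing.
\end{itemize}

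I will collect several upper bounds for the two pieces.

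\emph{Pairwise bounds.} Each pair $(i,j)$ in the first group satisfies
$$T(A_i,B_j,3\cdot A)\le \min(a_ib_j,\ a_ia,\ b_ja),$$
and also, by Lemma \ref{lemma ABC} with $C=3\cdot A$, the sharper bound $a_ib_j-\tfrac14\max(a_i+b_j-a,0)^2+1$ when its hypotheses hold. The second group is handled in the same way with $a$ replaced by $b$.

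\emph{Global bounds.} Lemma \ref{lemma ABC} applied to the whole triple gives
$$T(A,B,3\cdot A)\le ab-\tfrac14 b^2+1,\qquad T(A,B,3\cdot B)\le ab-\tfrac14 a^2+1.$$
These global bounds carry only the additive constant $1$ each, which accounts for the $+2$ in the statement. I therefore intend to use the global Lemma \ref{lemma ABC} bounds (each at most once) together with the purely multiplicative pairwise bounds, so that no further additive errors accumulate.

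Combining everything, $T(A,B,3\cdot(A\cup B))$ is at most
$$\min\Big(S_3,\ ab-\tfrac{b^2}{4}+1\Big)+\min\Big(S_6,\ ab-\tfrac{a^2}{4}+1\Big),$$
where $S_3=a_1b_2+a_4b_8+a_7b_5$ and $S_6=a_1b_5+a_4b_2+a_7b_8$. The key structural point is that
$$S_3+S_6 = ab-(a_1b_8+a_4b_5+a_7b_2).$$
So the two groups cannot both be large without making the "wasted" class-$0$ products small, and that forces $a$ and $b$ to be concentrated in a way the Lemma \ref{lemma ABC} bounds penalize.

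The remaining, and main, step is the optimization: after normalizing $a+b=1$, show that the maximum of the above expression (dropping the $+1$'s) over nonnegative $a_i,b_j$ is at most $6/25$. Some sanity checks:
\begin{itemize}
\item If all mass sits in one pair, e.g.\ $A=A_1$, $B=B_2$, the bound $ab-b^2/4$ maximizes at $b=2/5$ with value $5/25$.
\item The uniform distribution gives $S_3+S_6=1/6$.
\item The extremal value $6/25$ should come from an intermediate configuration where both groups are active.
\end{itemize}
I would first try to solve this by fixing $a,b$ and maximizing $\min(S_3,\cdot)+\min(S_6,\cdot)$ over the splittings. By symmetry and convexity in each block, I expect the extremum to occur with only two residue classes on each side used. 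That reduces the problem to a few variables, handled by the case split according to which term of each $\min$ is active, solving each case with elementary calculus and AM-GM.

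If this relaxation turns out to be too lossy (i.e.\ its maximum exceeds $6/25$), I will add the per-pair Lemma \ref{lemma ABC} penalties $\tfrac14(a_i+b_j-a)_+^2$ as multiplicative-only refinements. Alternatively, I would set up an induction on $|A|+|B|$, passing to $(A_i-i)/3$ and $(B_j-j)/3$ as in Lemma \ref{lem:half-bound}, with this step reusing the statement itself for the dominant pair. Verifying that the optimum is exactly $6/25$ is where I expect the real difficulty.
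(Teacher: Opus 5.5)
There is a genuine gap: the relaxation you propose to optimize is too weak, so the main step cannot succeed. Take $|A|=|B|=n$ with $A=A_1$ and $B=B_2\cup B_5$, $|B_2|=|B_5|=n/2$. Then $S_3=S_6=n^2/2$ and the class-$0$ products vanish. The global bounds $ab-\tfrac14b^2+1=ab-\tfrac14a^2+1=\tfrac34n^2+1$ do not bind, and neither do the pairwise trivial bounds. So your expression equals $n^2=\tfrac14(|A|+|B|)^2>\tfrac6{25}(|A|+|B|)^2$.

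The global applications of Lemma~\ref{lemma ABC} depend only on $a$ and $b$. They therefore cannot penalize concentration inside a residue class mod $9$, and that concentration is exactly what makes the wasted term $a_1b_8+a_4b_5+a_7b_2$ small. Your heuristic that such concentration is ``penalized by the Lemma~\ref{lemma ABC} bounds'' fails for the bounds you actually use. Separately, $T(A,B,3\cdot A)\le ab-\tfrac14b^2+1$ needs the hypothesis $b\le 2a$. This is harmless, since otherwise $ab\le\tfrac6{25}(a+b)^2$, but it must be stated.

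What is missing is a decision about where to spend the two applications of Lemma~\ref{lemma ABC}, together with the optimization itself, which is the whole content of the lemma and which you leave undone. The paper splits on whether some class mod $9$ contains at least $\tfrac45$ of $A$ or of $B$.

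\emph{No dominant class.} A short linear-programming argument gives $a_1b_8+a_4b_5+a_7b_2\ge ab/25$, hence $T\le\tfrac{24}{25}ab\le\tfrac6{25}(a+b)^2$ with no additive loss.

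\emph{A dominant class.} Suppose $A^{\star}=A_{r_0}$ has $|A^{\star}|\ge\tfrac45a$ (otherwise swap roles via $x\mapsto 2x$). The paper writes
$T=T(A^{\star},B^{\star},3A)+T(A^{\star},B^{\circ},3B)+T(A\setminus A^{\star},B,3(A\cup B))$.
Here $B^{\star}$ is the class with $r_0+s_0\equiv 3\pmod 9$, and $B^{\circ}=B\setminus B^{\star}$ lumps the sum-$6$ class together with the sum-$0$ class. This way Lemma~\ref{lemma ABC} is used exactly twice, which produces the $+2$. The two penalties satisfy $u+v\ge 2|A^{\star}|-|A|\ge\tfrac35|A|$. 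Separate subcases handle the situations where the hypotheses of Lemma~\ref{lemma ABC} fail.

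Your fallback of per-pair penalties $\tfrac14(a_i+b_j-a)_+^2$ points the right way. Restricted to the two pairs containing a dominant class, it is essentially the paper's argument, but you would still need the dichotomy above and the explicit computation. Applied to all pairs, however, it costs up to $+6$ rather than $+2$, because each use of Lemma~\ref{lemma ABC} carries a $+1$; no ``multiplicative-only'' version of that lemma is available.

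The induction alternative does not close either. On $A_1\times B_2$ against $3\cdot A$, the substitution $x=9x'+1$, $y=9y'+2$, $z=3z'+1$ turns the count into $x'+y'=z'$ with $z'\in(A-1)/3$. That is a problem of type $T_{1,1,-1}$, not an instance of the lemma.
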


We say $S$ is \textit{winnable} if $|S_1| + |S_2| \geq \frac{1}{3}|S|$.
\begin{lemma}\label{lem 3.4}
    If $S$ is winnable, then for some absolute constant $\varepsilon > 0$, we have
    $$T(S, S, 3 \cdot S) \leq \left(\frac{1}{2} - \varepsilon\right) |S|^2 + 6.$$
\end{lemma}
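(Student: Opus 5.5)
We apply the decomposition \eqref{eqn: iden} once, write $s_i=|S_i|$ and $n=|S|=s_0+s_1+s_2$, and bound each of the three terms separately.

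\emph{First term.} Since $S_1$ and $S_2$ lie in the classes $1$ and $2$ mod $3$, any solution $x+y=3z$ with $x\in S_1$, $y\in S_2$, $z\in S$ forces $x+y\equiv 0\pmod 3$. This is automatic here, so every $z\in S$ can occur. Moreover $x$ and $y$ determine $z$, so the count is at most $s_1s_2$. This is also the bound obtained by restricting $z$ to $S_1\cup S_2$ and then applying Lemma~\ref{lemma ABAcupB}. Lemma~\ref{lemma ABAcupB} improves on $s_1s_2$ only when $z$ ranges over $S_1\cup S_2$, so we keep the trivial bound $s_1s_2$ here and look for the gain elsewhere.

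\emph{Second term.} Set $m=s_1+s_2$. If $m\le 2s_0$, Lemma~\ref{lemma ABC} applies with $A=B=S_0$ and $C=3\cdot(S_1\cup S_2)$; its hypotheses $|A|+|C|\ge|B|$ and $|B|+|C|\ge|A|$ hold trivially. It gives
$$T(S_0,S_0,3\cdot(S_1\cup S_2))\le s_0m-\tfrac14 m^2+1.$$
If $m>2s_0$, we use the bound $s_0^2\le s_0m-\tfrac14 m^2+\tfrac14(m-2s_0)^2$. Either way the second term is at most $s_0m-c\min(m,2s_0)^2+1$ for an absolute constant $c>0$.

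\emph{Third term.} We use Lemma~\ref{lem:half-bound}: $T(S_0,S_0,3\cdot S_0)\le \tfrac12 s_0^2+1$.

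\emph{Combining.} Adding the three bounds and using $2s_1s_2\le \tfrac12 m^2$ gives
$$T(S,S,3\cdot S)\le \tfrac12 m^2+s_0m+\tfrac12 s_0^2-c\min(m,2s_0)^2+2=\tfrac12 n^2-c\min(m,2s_0)^2+2.$$
Winnability gives $m\ge n/3$. So if $s_0\ge \eta n$ for a small fixed $\eta$, the loss $c\min(m,2s_0)^2$ is at least $c\min(1/3,2\eta)^2n^2$, and we are done with $\varepsilon=c\min(1/9,4\eta^2)$.

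\emph{The case $s_0<\eta n$.} This is the main obstacle, because the gain must now come from the first term. We go back to the exact count. Split $T(S_1,S_2,3\cdot S)=T(S_1,S_2,3\cdot(S_1\cup S_2))+T(S_1,S_2,3\cdot S_0)$. The first piece is at most $\tfrac{6}{25}m^2+2$ by Lemma~\ref{lemma ABAcupB}. The second is at most $s_1s_0$, since $x$ and $z$ determine $y$. Hence
$$2T(S_1,S_2,3\cdot S)\le \tfrac{12}{25}m^2+2s_0m+4=\tfrac12 m^2-\tfrac1{50}m^2+2s_0m+4.$$
The remaining two terms contribute at most $s_0m+\tfrac12 s_0^2+2$ by the trivial bound and Lemma~\ref{lem:half-bound}. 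Altogether
$$T(S,S,3\cdot S)\le \tfrac12 n^2-\tfrac1{50}m^2+s_0m+6\le \tfrac12 n^2-\tfrac1{450}n^2+\eta n^2+6,$$
using $m\ge n/3$ and $s_0m\le s_0n<\eta n^2$. Choosing $\eta=1/900$ handles this case with additive constant $6$, which matches the statement. Taking $\varepsilon$ to be the smaller of the two constants then finishes the proof.

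The only delicate points are checking the hypotheses of Lemma~\ref{lemma ABC} in the second-term case analysis and keeping track of the additive constants.
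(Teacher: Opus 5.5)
Your strategy is the paper's. You use the same decomposition \eqref{eqn: iden} and split according to whether $|S_0|$ is a small fixed fraction of $n$. When $S_0$ is not tiny, you apply Lemma~\ref{lemma ABC} to the middle term and the trivial bound $2s_1s_2\le\frac12 m^2$ to the first. When $S_0$ is tiny, you use Lemma~\ref{lemma ABAcupB} plus a bound on $T(S_1,S_2,3\cdot S_0)$. The paper cuts at $|S_0|=n/51$ and optimizes the resulting quadratics to get $\varepsilon=49/2601$. Your first case is fine with $c=1/4$, since for $m>2s_0$ one has $s_0^2\le s_0m-s_0^2$.

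The second case contains an arithmetic slip that, for your choice of $\eta$, wipes out the gain. Adding your bounds gives
$\frac{12}{25}m^2+2s_0m+4+s_0m+\frac12 s_0^2+2=\frac12 n^2-\frac1{50}m^2+2s_0m+6$, not $\frac12 n^2-\frac1{50}m^2+s_0m+6$. The expansion $\frac12 n^2=\frac12 m^2+s_0m+\frac12 s_0^2$ absorbs only one of the three copies of $s_0m$. So you only get $T(S,S,3\cdot S)\le \frac12 n^2-\frac1{450}n^2+2\eta n^2+6$. With $\eta=1/900$ the two terms cancel exactly, and no $\varepsilon>0$ results.

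The repair is immediate, and any one of the following works:
\begin{itemize}
\item Take $\eta=1/1800$.
\item Do as the paper does and bound $T(S_1,S_2,3\cdot S_0)\le \min(s_1,s_2)\,s_0\le \frac m2 s_0$. For fixed $z$, either of $x,y$ determines the other. This makes your displayed inequality correct as written.
\item In this case use $m>(1-\eta)n$ instead of $m\ge n/3$. This leaves a gain of $\bigl(\frac1{50}(1-\eta)^2-2\eta\bigr)n^2$, which is positive for small $\eta$.
\end{itemize}
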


It remains to consider the case when $S$ is not winnable, where we need a final observation.
\begin{lemma}
If $T(S_0, S_0, 3 \cdot (S_1 \cup S_2)) \geq \frac{2}{3} |S_0| |S_1 \cup S_2|$, then $S' = \frac{1}{3} \cdot S_0$ is winnable.   
\end{lemma}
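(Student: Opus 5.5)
The plan is a direct counting argument. Write $C = S_1 \cup S_2$, and note $S' = \frac13 \cdot S_0$ is a set of integers. For $S'$ write $S'_i$ for its elements congruent to $i$ modulo $3$, so that $|S'| = |S_0|$. By the definition of winnability, the goal is
$$|S'_1| + |S'_2| \geq \tfrac13 |S'|.$$

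First I rewrite the solution count. A solution of $x+y=3z$ with $x,y \in S_0$ and $z \in C$ is the same as a solution of $x'+y'=z$ with $x' = x/3$, $y' = y/3 \in S'$ and $z \in C$. So $T(S_0,S_0,3\cdot C) = T(S',S',C)$.

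The key observation is that $z \in C$ is not divisible by $3$. Hence no solution can have both $x'$ and $y'$ in $S'_0$, since then $z = x'+y' \equiv 0 \pmod 3$. Every solution therefore has $x' \in S'_1\cup S'_2$ or $y' \in S'_1 \cup S'_2$.

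Next I bound each of these two types of solution with the trivial bound. For solutions with $x' \in S'_1 \cup S'_2$, choosing $x'$ and $z$ determines $y'$. So there are at most $(|S'_1|+|S'_2|)\,|C|$ of them, and by symmetry the same bound holds for those with $y' \in S'_1\cup S'_2$. This gives
$$T(S_0,S_0,3\cdot C) \leq 2\,(|S'_1|+|S'_2|)\,|C|.$$
Combining with the hypothesis $T(S_0,S_0,3\cdot C) \geq \frac23 |S_0||C|$ and dividing by $2|C|$ yields $|S'_1|+|S'_2| \geq \frac13|S_0| = \frac13|S'|$, which is exactly winnability of $S'$.

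The only point needing care is the division by $|C|$, i.e.\ the degenerate case $C=\emptyset$, where the hypothesis holds vacuously as $0\ge 0$. In the surrounding argument we have assumed $S_0 \neq S$, so $C \neq \emptyset$ and the division is legitimate. I would state the lemma under that standing assumption, or equivalently add it as an explicit hypothesis. Beyond this there is no real obstacle; the lemma is essentially the remark that at least one summand must lie outside $3\Z$.
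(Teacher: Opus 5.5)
Your proof is correct and is essentially the paper's argument. The paper first pigeonholes to a single $a \in S_1 \cup S_2$ with at least $\frac23|S'|$ representations $a = b + (a-b)$ with $b, a-b \in S'$, and then uses that each such pair has a summand outside $S'_0$. You sum that same observation over all $z \in S_1\cup S_2$ at once, which skips the pigeonhole step. Your remark that $S_1 \cup S_2 \neq \emptyset$ is needed is apt. The paper's proof relies on it too (to choose $a$), and it is supplied there by the standing assumption $S_0 \neq S$.
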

\begin{proof}
    As
    $$T(S_0, S_0, 3 \cdot (S_1 \cup S_2)) = T\left(S', S', S_1 \cup S_2\right)$$
    there exists some $a \in  S_1 \cup S_2$ such that for at least $\frac{2}{3} |S'|$ of the $b \in S'$, $a - b$ also lies in $S'$. For any such pair $(b, a - b)$, at least one of $a$ and $b - a$ lies in $S_1' \cup S_2'$.
\end{proof}
We can finally show that
\begin{proposition}
    For some absolute constant $\delta > 0$, we have
    $$T(S, S, 3 \cdot S) \leq \left(\frac{1}{2} - \delta\right) |S|^2 + 3.$$
\end{proposition}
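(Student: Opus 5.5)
We argue by strong induction on $|S|$, combining the identity \eqref{eqn: iden} with the preceding lemmas; $\delta$ is chosen small at the end, depending only on $\varepsilon$ from Lemma~\ref{lem 3.4} and on the fixed constants below. Write $n=|S|$, $n_0=|S_0|$ and $m=|S_1\cup S_2|=n-n_0$, and assume, as in the proof of Lemma~\ref{lem:half-bound}, that $S_0\neq S$. Since $S\mapsto \frac13 S_0$ strictly shrinks the set when $S_0\ne S$, the induction is well founded. Note also that $T(S_0,S_0,3\cdot S_0)=T(S',S',3\cdot S')$ with $S'=\frac13\cdot S_0$. There are three cases.

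\textbf{Case 1: $S$ is winnable.} Lemma~\ref{lem 3.4} gives the bound directly, provided $\delta\le\varepsilon$. The additive constant $6$ there exceeds the $3$ we want, but since $\varepsilon n^2$ absorbs the difference once $n$ is large, we can take $\delta\le \varepsilon/2$ and treat small $n$ by taking $\delta$ smaller still (for bounded $n$, $T\le n^2/2+1$ already suffices with a tiny $\delta$). I will adjust the constant bookkeeping so the induction closes; this is routine.

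\textbf{Case 2: $S$ is not winnable and $T(S_0,S_0,3\cdot(S_1\cup S_2)) \ge \frac23 n_0 m$.} In this case $m<n/3$, and the preceding lemma says that $S'$ is winnable. For the first two terms of \eqref{eqn: iden} we use the trivial bounds $2|S_1||S_2|\le m^2/2$ and $n_0 m$. For the third term we apply Lemma~\ref{lem 3.4} to $S'$, obtaining
\[
T\le \tfrac12 m^2+n_0m+(\tfrac12-\varepsilon)n_0^2+6=\tfrac12 n^2-\varepsilon n_0^2+6 .
\]
Because $n_0\ge 2n/3$, this is at most $(\frac12-\frac49\varepsilon)n^2+6$.

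\textbf{Case 3: $S$ is not winnable and $T(S_0,S_0,3\cdot(S_1\cup S_2))<\frac23 n_0m$.} Here the middle term of \eqref{eqn: iden} saves $\frac13 n_0 m$ relative to the trivial bound, and we apply induction to the third term. This gives
\[
T\le \tfrac12 n^2-\delta n_0^2+3-\tfrac13 n_0 m .
\]
We need $\delta n_0^2+\frac13 n_0m\ge \delta n^2$. Since $n^2-n_0^2=m(n+n_0)\le 2nm\le 3n_0m$ (using $n_0\ge 2n/3$), it suffices that $\delta\le 1/9$.

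The main obstacle is that the additive constants do not close cleanly. In Case 3 the induction returns the same $+3$, which is fine. In Case 2, however, Lemma~\ref{lem 3.4} carries $+6$, and in Case 1 we need $+6\le+3$. I would handle this by noting that the gain $\frac49\varepsilon n^2-\delta n^2$ (respectively $(\varepsilon-\delta)n^2$) exceeds $3$ once $n\ge n_1(\varepsilon)$ with $\delta=\varepsilon/4$. For $n<n_1$ I would use the exact bound $T\le \frac12n^2+1$ from Lemma~\ref{lem:half-bound}: this gives $T\le(\frac12-\delta)n^2+3$ whenever $\delta n^2\le 2$, which holds after shrinking $\delta$ to $\min(\varepsilon/4,1/9,2/n_1^2)$.
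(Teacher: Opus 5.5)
Your proof is correct and follows the paper's argument: the same three-way split into winnable $S$; non-winnable $S$ with the middle term of \eqref{eqn: iden} at least $\frac23|S_0||S_1\cup S_2|$, where $\frac13\cdot S_0$ is winnable and Lemma~\ref{lem 3.4} is applied to it; and the remaining case closed by induction using $|S_0|\ge\frac23|S|$. Your handling of the additive constant is more careful than the paper's, which takes $\delta=4\varepsilon/9$ and writes $+3$ where Lemma~\ref{lem 3.4} only gives $+6$; shrinking $\delta$ and treating small $|S|$ via Lemma~\ref{lem:half-bound}, as you do, closes that gap.
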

\begin{proof} Let $\varepsilon = \frac{49}{2601}$ as in the previous lemma and let $\delta=\frac{4\varepsilon}{9}$. We argue by induction on $|S|$. Assume that the result holds for all subsets of $S$.

    If $S$ is winnable, then we are already done. Otherwise, $S$ is not winnable, which means $|S_1| + |S_2| \leq \frac{1}{3} |S|$. We may assume that $S_0\neq S$. Recall that
    $$T(S, S, 3 \cdot S) = 2 T(S_1, S_2, 3 \cdot S) + T(S_0, S_0, 3 \cdot (S_1 \cup S_2)) + T(S_0, S_0, 3 \cdot S_0).$$
    Bounding the first term trivially and the third term by the induction hypothesis, we get
    \begin{equation}\label{eqn: *}
       T(S, S, 3 \cdot S) \le \frac{1}{2} (|S_1| + |S_2|)^2 + T(S_0, S_0, 3 \cdot (S_1 \cup S_2)) + \left(\frac{1}{2} - \delta\right) |S_0|^2 + 3. 
    \end{equation}
    If $T(S_0, S_0, 3 \cdot (S_1 \cup S_2)) \leq \frac{2}{3} (|S_1| + |S_2|) |S_0|$, then we are done, since
    $$\frac{1}{2} (|S_1| + |S_2|)^2 + T(S_0, S_0, 3 \cdot (S_1 \cup S_2)) \leq \frac{1}{2} (|S_1| + |S_2|)^2 + \frac{2}{3} (|S_1| + |S_2|) |S_0|$$
    and we have
    $$\frac{1}{2} (|S_1| + |S_2|)^2 + T(S_0, S_0, 3 \cdot (S_1 \cup S_2)) \leq (\frac{1}{2} - \delta)((|S_1| + |S_2|)^2 + (1-2\delta)(|S_1| + |S_2|) |S_0|)$$
    and substituting into \eqref{eqn: *} gives what we want.
    
    Otherwise, by the preceding lemma, $\frac{1}{3} \cdot S_0$ is winnable, which allows us to upgrade \eqref{eqn: *} into
    $$T(S, S, 3 \cdot S) \le \frac{1}{2} (|S_1| + |S_2|)^2 + T(S_0, S_0, 3 \cdot (S_1 \cup S_2)) + \left(\frac{1}{2} - \varepsilon\right) |S_0|^2 + 3.$$
    Now we can also bound the second term trivially
    $$T(S, S, 3 \cdot S) \le \frac{1}{2} (|S_1| + |S_2|)^2 + |S_0|(|S_1| + |S_2|) + \left(\frac{1}{2} - \varepsilon\right) |S_0|^2 + 3.$$
    By our assumption that $|S_1| + |S_2| \leq \frac{1}{3} |S|$, we conclude that
    $$T(S, S, 3 \cdot S) \le \left(\frac{1}{2} - \delta\right) |S|^2 + 3$$
    as desired.
\end{proof}

\section{The construction in Lemma \ref{lem:local-sets}}\label{sec:local}

\begin{proof}[Proof of Lemma \ref{lem:local-sets}]
We prove a natural asymmetric extension of Lemma \ref{lem:local-sets}. Let $g_1,g_2,g_3: \Z/q\Z \rightarrow [0,1]$ be three functions. For large $N$ and $i \in \{1,2,3\}$, define
$$
A^{(i)} = \bigcup_{r=1}^q A_r^{(i)}, \ \ A_r^{(i)} := \Big\{n \equiv r\Mod{q}: \Big|\frac{n}{N}\Big| \leq \frac{g_i(r)}{2}\Big\}.
$$
Then
$$
|A^{(i)}| \sim  \frac{N}{q} \sum_r g_i(r).
$$
We will prove that
\begin{equation}\label{eq:local-sets-1}
T_{a_1,a_2,a_3}(A^{(1)}, A^{(2)}, A^{(3)}) \sim \Big(\frac{N}{q}\Big)^2 \gamma_{a_1,a_2,a_3}(g_1,g_2,g_3),
\end{equation}
where $T_{a_1,a_2,a_3}(A^{(1)}, A^{(2)}, A^{(3)})$ denotes the number of solutions to $a_1x_1 + a_2x_2 + a_3x_3 = 0$ with $x_i \in A^{(i)}$, and 
$$
\gamma_{a_1,a_2,a_3}(g_1,g_2,g_3) := \frac{1}{|a_1a_2a_3|} \sum_{\substack{r_1,r_2,r_3\Mod{q} \\ a_1r_1+a_2r_2+a_3r_3\equiv 0\Mod{q}}} G(|a_1|g_1(r_1), |a_2|g_2(r_2), |a_3|g_3(r_3)).
$$
Since
$$
T_{a_1,a_2,a_3}(A^{(1)}, A^{(2)}, A^{(3)}) = \sum_{\substack{r_1,r_2,r_3\Mod{q} \\ a_1r_1+a_2r_2+a_3r_3\equiv 0\Mod{q}}} T_{a_1,a_2,a_3}(A^{(1)}_{r_1}, A^{(2)}_{r_2}, A^{(3)}_{r_3}),
$$
it suffices to show that
\begin{equation}\label{eq:local-sets-2}
 T_{a_1,a_2,a_3}(A^{(1)}_{r_1}, A^{(2)}_{r_2}, A^{(3)}_{r_3}) \sim \frac{N^2}{q^2|a_1a_2a_3|}  G(|a_1|g_1(r_1), |a_2|g_2(r_2), |a_3|g_3(r_3))
\end{equation}
for all $r_1,r_2,r_3\Mod{q}$ with $a_1r_1+a_2r_2+a_3r_3 \equiv 0\Mod{q}$. Fix such $r_1,r_2,r_3\Mod{q}$ for the rest of the proof.

Choose $r_1^*,r_2^*,r_3^*\in\Z$ with $r_i^*\equiv r_i\Mod{q}$ for each $i$, such that $a_1r_1^*+a_2r_2^*+a_3r_3^*=0$. This can be achieved as follows. Start with any $r_1',r_2',r_3'\in\Z$ with $r_i'\equiv r_i\Mod{q}$. Then $a_1r_1'+a_2r_2'+a_3r_3'\equiv 0\Mod{q}$. Since $\gcd(a_1,a_2,a_3)=1$, one can find $m_1,m_2,m_3 \in \Z$ such that 
$$
a_1m_1+a_2m_2+a_3m_3 = -\frac{a_1r_1'+a_2r_2'+a_3r_3'}{q}.
$$
Then set $r_i^* = r_i' + qm_i$.

If $n_1,n_2,n_3$ with $n_i \in A^{(i)}_{r_i}$ satisfy $a_1n_1+a_2n_2+a_3n_3 = 0$, then $x_i = a_in_i$ satisfy
$$
x_1 + x_2 + x_3 = 0, \ \ |x_i| \leq \frac{|a_i|}{2}g_i(r_i)N, \ \ x_i \equiv a_ir_i\Mod{a_iq}. 
$$
Moreover, the congruence condition $x_3 \equiv a_3r_3\Mod{a_3q}$ is equivalent to $x_3 \equiv a_3r_3^*\Mod{a_3q}$, which is in turn equivalent to
$$
x_1+x_2 \equiv a_1r_1^*+a_2r_2^*\Mod{a_3q}.
$$
Hence the number of such $(n_1,n_2,n_3)$ is the size of the intersection of the convex body $N \cdot \Omega$, where
$$
\Omega = \Big\{(x_1, x_2) \in \R^2: |x_i| \leq \frac{|a_i|}{2}g_i(r_i), \ \ |x_1+x_2|\leq \frac{|a_3|}{2}g_3(r_3)\Big\},
$$
and the translate $(a_1r_1^*, a_2r_2^*) + \Gamma$ of the lattice
$$
\Gamma = \{(x_1, x_2) \in \Z^2: x_i \equiv 0\Mod{a_iq}, \ \ x_1+x_2\equiv 0\Mod{a_3q}\}.
$$
It follows that
$$
T_{a_1,a_2,a_3}(A^{(1)}_{r_1},A^{(2)}_{r_2},A^{(3)}_{r_3}) = |(N\cdot \Omega) \cap ((a_1r_1^*,a_2r_2^*) + \Gamma)| \sim \frac{\vol(\Omega)}{\vol(\Gamma)} N^2.
$$
By Definition \ref{defn:G}, we have 
$$
\vol(\Omega) = G(|a_1|g_1(r_1), |a_2|g_2(r_2), |a_3|g_3(r_3)). 
$$
If $(x_1,x_2) \in \Gamma$, then from $x_2 \equiv 0\Mod{a_2q}$ and $x_2 \equiv -x_1\Mod{a_3q}$ it follows that $x_1 \equiv 0\Mod{q\cdot \gcd(a_2,a_3)}$. Since $a_1$ is coprime with $\gcd(a_2,a_3)$, we have $x_1 \equiv 0\Mod{qa_1\cdot \gcd(a_2,a_3)}$. Once such an $x_1$ is chosen, $x_2$ must lie in a specific residue class modulo $q\cdot \lcm(a_2,a_3)$. It follows that 
$$
\vol(\Gamma) = q^2|a_1 \cdot \gcd(a_2,a_3) \cdot \lcm(a_2,a_3)| = q^2 |a_1a_2a_3|.
$$
This leads to the desired estimate \eqref{eq:local-sets-2}.
\end{proof}

Using this construction, one can deduce the following lemma which will be used later.

\begin{lemma}\label{lem:G-sumbound}
For any functions $f_1,f_2,f_3: \Z/q\Z \rightarrow \R_{\geq 0}$ we have
$$
\frac{1}{|a_1a_2a_3|} \sum_{\substack{r_1,r_2,r_3\Mod{q} \\ a_1r_1+a_2r_2+a_3r_3\equiv 0\Mod{q}}} G(|a_1|f_1(r_1), |a_2|f_2(r_2), |a_3|f_3(r_3)) \leq G(F_1, F_2, F_3),
$$
where $F_i = \sum_s f_i(s)$.
\end{lemma}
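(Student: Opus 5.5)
The plan is to use the construction from the proof of Lemma \ref{lem:local-sets}, in its asymmetric form \eqref{eq:local-sets-1}, and to compare it with a continuous rearrangement bound for three intervals. By homogeneity and continuity of $G$ (it is the area of a polygon whose sides depend continuously on the parameters), we may assume every $f_i$ takes values in $[0,1]$; scaling all $f_i$ by a common factor $\lambda$ multiplies both sides by $\lambda^2$. Put $g_i = f_i$. For large $N$, build the sets $A^{(i)}$ as in that proof. Then \eqref{eq:local-sets-1} gives
$$
T_{a_1,a_2,a_3}(A^{(1)},A^{(2)},A^{(3)}) \sim \Big(\frac{N}{q}\Big)^2 \cdot \text{LHS},
\qquad |A^{(i)}| \sim \frac{N}{q} F_i .
$$
It therefore suffices to prove an upper bound for $T_{a_1,a_2,a_3}(A,B,C)$ in terms of the sizes of the three sets:
$$
T_{a_1,a_2,a_3}(A,B,C) \le G(|A|,|B|,|C|) + o\big((|A|+|B|+|C|)^2\big).
$$

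I would prove this general bound as follows. Note first that $T_{a_1,a_2,a_3}(A,B,C)$ counts solutions of $x+y+z=0$ with $x\in a_1A$, $y\in a_2B$, $z\in a_3C$. The dilates $a_iA$ have the same sizes as $A,B,C$, so it is enough to treat the equation $x+y+z=0$ with arbitrary finite sets $X,Y,Z$. The number of solutions is
$$
\sum_{x\in X,\, y\in Y} 1_{-Z}(x+y).
$$
The Riesz--Sobolev / Hardy--Littlewood rearrangement inequality on $\Z$ (due to Gabriel and Hardy--Littlewood, and in Lev's discrete form already used in the introduction and in Lemma \ref{lemma ABC}) says this count is maximised, up to $O(1)$ or lower-order error, when $X$, $Y$, $Z$ are replaced by centred intervals of the same sizes. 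For three centred intervals of lengths $\alpha,\beta,\gamma$, the number of solutions is the number of lattice points in the region of Definition \ref{defn:G}, namely $G(\alpha,\beta,\gamma)+O(\alpha+\beta+\gamma)$. This uses the symmetric form $|x|\le\alpha/2$, $|y|\le \beta/2$, $|x+y|\le\gamma/2$.

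Dividing by $(N/q)^2$ and letting $N\to\infty$ then gives $\text{LHS}\le G(F_1,F_2,F_3)$, using the homogeneity $G(\lambda\cdot)=\lambda^2G(\cdot)$ and the continuity of $G$.

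The main obstacle is quoting the discrete three-set rearrangement inequality in the precise form needed, with an error that is $o(N^2)$. Lemma \ref{lemma ABC} gives exactly the bound $|A||B|-\tfrac14(|A|+|B|-|C|)^2+1$ under the triangle conditions, and this matches $G$ in the case $\alpha+\beta\ge\gamma$ with $\gamma$ the largest size. When the triangle condition fails, the trivial bound $\min$ of pairwise products, which was noted before Lemma \ref{lem:half-bound}, equals $G$. To apply this, I would relabel the sets so that the largest one plays the role of $C$; the equation $x+y=z$ used there becomes $x+y+z=0$ after negating $Z$, and negation preserves sizes. So I would combine these two cases and check that they cover all orderings of the sizes.
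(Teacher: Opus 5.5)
Your proposal is correct and is essentially the paper's proof: realize the left-hand side through the asymmetric construction \eqref{eq:local-sets-1}, dilate by $a_i$ to pass to $x+y+z=0$, and bound the count by $G$ of the set sizes via the three-set rearrangement inequality. The paper simply quotes Lev's Theorem~1 for this last step, and your rescaling to $[0,1]$-valued $f_i$ is harmless but not needed, since that construction never uses $g_i\le 1$. Your fallback via Lemma~\ref{lemma ABC} also works and even gives an error of only $+1$: once the largest set is negated and placed in the role of $C$, the triangle hypotheses hold automatically and the bound equals $G(|A|,|B|,|C|)+1$ in both regimes, so the trivial-bound case never arises.
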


\begin{proof}
Note that the left-hand side is precisely $\gamma_{a_1,a_2,a_3}(f_1,f_2,f_3)$ above. Let $N$ be sufficiently large and define $A^{(i)}$ for $i \in \{1,2,3\}$ as in the proof of Lemma \ref{lem:local-sets} above. By \eqref{eq:local-sets-1} we have
$$
\gamma_{a_1,a_2,a_3}(f_1,f_2,f_3) \sim \Big(\frac{q}{N}\Big)^2 T_{a_1,a_2,a_3}(A^{(1)}, A^{(2)}, A^{(3)}).
$$
Letting $B^{(i)} = a_i \cdot A^{(i)} := \{a_in : n \in A^{(i)}\}$, we have by definition
$$
 T_{a_1,a_2,a_3}(A^{(1)}, A^{(2)}, A^{(3)}) =  T_{1,1,1}(B^{(1)}, B^{(2)}, B^{(3)}).
$$
By the rearrangement inequality \cite[Theorem 1]{lev1998number}, $T_{1,1,1}(B^{(1)},B^{(2)},B^{(3)})$ is maximized when each $B^{(i)}$ is a balanced interval around $0$ of length $|B^{(i)}| = |A^{(i)}| \sim (N/q) F_i$, and hence
$$
T_{1,1,1}(B^{(1)}, B^{(2)}, B^{(3)}) \leq (1+o(1)) G(|B^{(1)}|, |B^{(2)}|, |B^{(3)}|) \leq (1+o(1)) \Big(\frac{N}{q}\Big)^2 G(F_1, F_2, F_3).
$$
The conclusion follows.
\end{proof}

\section{Proof of Theorem~\ref{thm:1/5}}\label{sec:1/5}

Let $\eps > 0$ be a small parameter. By Theorem \ref{thm:main}, we have $\gamma_{1,1,a} = \sup_g \gamma(g)$, where the supremum is taken over all functions $g: \Z/q\Z \rightarrow [0,1]$ with $\sum_x g(x) = 1$ for some $q = a^k$, and
$$
\gamma(g) = \gamma_{1,1,a}(g) = \frac{1}{a} \sum_{\substack{x,y,z\Mod{q}\\ x+y+az\equiv 0\Mod{q}}} G(g(x), g(y), ag(z)).
$$

First we show that we can restrict to those terms with $g(x), g(y), ag(z)$ within a multiplicative factor of $\eps$ of each other in the sum above at the cost of an $O(\eps)$ error. 
Indeed, the total contribution to $\gamma(g)$ from those terms with $g(x) < \eps g(y)$ is at most
$$
\frac{1}{a} \sum_{\substack{x,y,z\Mod{q}\\ x+y+az\equiv 0\Mod{q}}} \eps g(y)\cdot ag(z) = \eps \sum_{y,z} g(y) g(z) = \eps,
$$
where we used the bound $G(g(x), g(y), ag(z)) \leq g(x) \cdot ag(z)$. Similarly, the total contribution from those terms with $g(y) < \eps g(x)$ is also at most $\eps$. Next, the total contribution from those terms with $g(x) < \eps ag(z)$ is at most
$$
\frac{1}{a} \sum_{\substack{x,y,z\Mod{q}\\ x+y+az\equiv 0\Mod{q}}} \eps ag(z) g(y) = \eps \sum_{y,z} g(y)g(z) = \eps,
$$
where we used the bound $G(g(x), g(y), ag(z)) \leq g(x)g(y)$. Finally, the total contribution from those terms with $ag(z) < \eps g(x)$ is at most
$$
\frac{1}{a} \sum_{\substack{x,y,z\Mod{q}\\ x+y+az\equiv 0\Mod{q}}} \eps g(x) g(y) = \eps \sum_{\substack{x,y\Mod{q} \\ a \mid x+y}} g(x) g(y) \leq \eps,
$$
where we used the bound $G(g(x), g(y), ag(z)) \leq g(y) \cdot ag(z)$ and the fact that there are $a$ choices of $z\Mod{q}$ satisfying $x+y+az\equiv 0\Mod{q}$ once $x,y$ are fixed with $a\mid x+y$. Similarly, the total contribution from those terms with $g(y) < \eps ag(z)$ or $ag(z) < \eps g(y)$ is also at most $\eps$. Hence we have
$$
\gamma(g) = \frac{1}{a} \sum_{\substack{x,y,z\Mod{q}\\ x+y+az\equiv 0\Mod{q} \\ \min(g(x), g(y), ag(z)) \geq \eps \max(g(x),g(y),ag(z))}} G(g(x), g(y), ag(z)) + O(\eps)
$$

Let $\lambda \in [1,a]$ be a parameter to be chosen later. For $i \geq 0$, let 
$$
X_i = \{x \in \Z/q\Z: g(x) \in (\lambda a^{-i-1}, \lambda a^{-i}]\}, \ \ X_i' = \{x \in \Z/q\Z: g(x) \in (\eps^{-1}\lambda a^{-i-1}, \eps \lambda a^{-i}]\}.
$$
Then $X_0,X_1,\cdots$ forms a partition of $\Z/q\Z$, and $X_i' \subset X_i$ for each $i$. Let $E = E_{\lambda}:= \cup_i (X_i\setminus X_i')$ be the set of exceptional $x$. The contribution to $\gamma(g)$ from those terms with at least one of $x,y,z$ lying in $E$ is at most
$$
\frac{1}{a}\sum_{\substack{x,y,z\Mod{q} \\ x+y+az\equiv 0\Mod{q} \\ x \in E}} g(x) \cdot ag(z) + \frac{1}{a}\sum_{\substack{x,y,z\Mod{q} \\ x+y+az\equiv 0\Mod{q} \\ y \in E}} g(y) \cdot ag(z) + \frac{1}{a}\sum_{\substack{x,y,z\Mod{q} \\ x+y+az\equiv 0\Mod{q} \\ z \in E}} g(x) \cdot ag(z),
$$
and each of the three terms above is at most $g(E) := \sum_{x \in E}g(x)$.

Now let's choose $\lambda \in [1,a]$ so that $g(E)$ is small. If $x \in X_i\setminus X_i'$ then
$$
\eps g(x)a^{i+1} \leq \lambda < g(x)a^{i+1}\text{ or }g(x)a^i \leq \lambda < \eps^{-1} g(x)a^i,
$$
and hence
$$
\int_1^a g(E_\lambda) \frac{d\lambda}{\lambda} = \sum_x g(x) \sum_i \Big(\int_{\eps g(x)a^{i+1}}^{g(x)a^{i+1}} 1_{\lambda \in [1,a]}\frac{d\lambda}{\lambda} + \int_{g(x)a^i}^{\eps^{-1}g(x)a^i} 1_{\lambda \in [1,a]}\frac{d\lambda}{\lambda}\Big).
$$
Each of the integrals on the right-hand side above is at most $\log\eps^{-1}$, and, since $\lambda \in [1,a]$, for each $x$ there are at most two values of $i$ for which the integral does not vanish. Hence the expression above is at most
$$
4\log \eps^{-1} \sum_x g(x) = 4\log \eps^{-1}.
$$
Hence there exists $\lambda$ such that $E = E_\lambda$ satisfies
$$
g(E) \leq \frac{4\log \eps^{-1}}{\log a}.
$$
We make this choice of $\lambda$ with $\eps = 1/(\log a)$, which leads to
$$
\gamma(g) = \frac{1}{a} \sum_{\substack{x,y,z\Mod{q}\\ x+y+az\equiv 0\Mod{q} \\ \min(g(x), g(y), ag(z)) \geq \eps \max(g(x),g(y),ag(z)) \\ x,y,z\notin E}} G(g(x), g(y), ag(z)) + O\Big(\frac{\log\log a}{\log a}\Big).
$$

If $x,y,z \notin E$ satisfies $\min(g(x), g(y), ag(z)) \geq \eps \max(g(x),g(y),ag(z))$, then $x \in X_i'$ for some $i$, and hence $g(x) \in (\eps^{-1}\lambda a^{-i-1}, \eps\lambda a^{-i}]$. Then $g(y), ag(z) \in (\lambda a^{-i-1}, \lambda a^{-i}]$, so that $y \in X_i$ and $z \in X_{i+1}$. Hence $y \in X_i'$ and $z \in X_{i+1}'$ since $y,z \notin E$. It follows that
$$
\gamma(g) \leq \frac{1}{a} \sum_i \sum_{\substack{x,y \in X_i', z \in X_{i+1}'\\ x+y+az\equiv 0\Mod{q}}} G(g(x), g(y), ag(z)) + O\Big(\frac{\log\log a}{\log a}\Big).
$$
For each $i$, by Lemma \ref{lem:G-sumbound} (applied to the functions $g(x)1_{x \in X_i'}, g(y)1_{y \in X_i'}, g(z)1_{z\in X_{i+1}'}$) we have
$$
\frac{1}{a} \sum_{\substack{x,y \in X_i', z \in X_{i+1}'\\ x+y+az\equiv 0\Mod{q}}} G(g(x), g(y), ag(z))  \leq G(g(X_i'), g(X_i'), g(X_{i+1}')).
$$
It follows that
$$
\gamma(g) \leq \sum_{i \geq 0} G(g(X_i'), g(X_i'), g(X_{i+1}')) + O\Big(\frac{\log\log a}{\log a}\Big) \leq \frac{1}{5} + O\Big(\frac{\log\log a}{\log a}\Big),
$$
where the second inequality follows from the lemma below applied to the sequence $\{g(X_i')\}_{i\geq 0}$. This completes the proof.

\begin{lemma}
    \label{lem:optimize}
    Suppose $n_1, n_2, \cdots, n_k$ is a finite sequence of non-negative real numbers, with $ n_1 + \cdots + n_k \leq 1$. Then we have
    $$\sum_{i = 1}^{k - 1} G(n_i, n_i,n_{i + 1}) \leq \frac{1}{5}.$$
\end{lemma}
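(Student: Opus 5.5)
The plan is to prove the homogeneous form $\sum_{i=1}^{k-1} G(n_i,n_i,n_{i+1}) \le \frac15\big(\sum_i n_i\big)^2$. This is equivalent to the lemma, since $G$ is homogeneous of degree $2$ and nondecreasing in each argument. The argument has three parts: an explicit formula for $G(a,a,b)$, a variational reduction to short sequences, and a small optimization.

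\textbf{Formula and extremal example.} By the explicit formula after Definition \ref{defn:G},
$$G(a,a,b)=\begin{cases} ab-\tfrac14 b^2 & b\le 2a,\\ a^2 & b\ge 2a.\end{cases}$$
(For $b \le a$ and for $a\le b\le 2a$ the two branches of the formula both reduce to $ab-b^2/4$.) The case $k=2$ shows the constant is sharp. With $n_1=a$, $n_2=b$ and $a+b=1$, the expression $b(1-b)-b^2/4$ is maximised at $b=2/5$, giving exactly $1/5$. So the target is to show that longer sequences never beat two terms.

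\textbf{Reduction to a short support.} The maximum exists because for fixed $k$ the constraint set is compact and the objective is continuous. I would also check that the bound does not deteriorate as $k$ grows, by showing the maximiser can be taken with short support. Take a maximiser and suppose some $n_i>0$ is followed by $n_{i+1}\ge 2n_i$. Then $G(n_i,n_i,n_{i+1})=n_i^2$ does not increase when mass moves out of $n_{i+1}$. So either the chain splits there, or some mass can be shifted. When the positive entries form two disjoint blocks, I would merge or rescale them, using $x^2+y^2\le (x+y)^2$ and homogeneity, to show one block suffices.

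Inside a block every consecutive pair has $n_{i+1}\le 2n_i$, so the objective is the quadratic form
$$Q(n)=\sum_i \big(n_in_{i+1}-\tfrac14 n_{i+1}^2\big).$$
I would then apply Lagrange/KKT conditions on the simplex. Stationarity gives linear equations $n_{i-1} - \tfrac12 n_i + n_{i+1} = \mu$ for interior indices, with modified equations at the two ends. These force a rigid (essentially linear-recurrence) shape. Combined with the constraint $n_{i+1}\le 2n_i$, I expect to rule out supports of length $\ge 4$ and to reduce to $k\le 3$.

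\textbf{Main obstacle.} The hardest step is the case $k=3$ (and excluding longer supports), where the two regimes of $G$ interact. I would first try a direct peeling inequality, for example an induction of the form
$$G(n_1,n_1,n_2)+\tfrac15 R^2 \le \tfrac15 (n_1+R)^2 \qquad (R\ge n_2).$$
This fails exactly when $n_2/n_1\in(2/5,2)$. In that range I would instead peel the first two terms together. The comparison to prove is
$$G(a,a,b)+G(b,b,c)\le \tfrac15\big[(a+b+R)^2-R^2\big] \qquad (R\ge c).$$
This is a piecewise-quadratic inequality in $a,b,c$ (with $R=c$ the worst case) and can be verified by checking the finitely many quadratic regimes, i.e.\ copositivity of explicit $3\times3$ forms. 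If this two-step peeling closes the induction, it replaces the variational argument altogether.
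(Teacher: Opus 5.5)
Your final route is a direct induction on the homogeneous bound $\sum_{i<k}G(n_i,n_i,n_{i+1})\le\frac15\bigl(\sum_i n_i\bigr)^2$. You peel off one term when $n_2\notin(\frac25 n_1,2n_1)$ and two terms otherwise. This is correct and genuinely different from the paper's proof.

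Your one-step threshold is right, since $5t^2-12t+4=(5t-2)(t-2)$. You left the two-step inequality unverified, but it does hold whenever $b\le 2a$, which your case split guarantees. For $c\le 2b$,
\[
4(a+b)^2+8(a+b)c-20\bigl(G(a,a,b)+G(b,b,c)\bigr)=(2a-3b+2c)^2+c^2\ge 0 .
\]
For $c\ge 2b$, the left-hand side $G(a,a,b)+b^2$ is independent of $c$ while the bound increases in $c$, so that case follows from $c=2b$. Taking $c=0$ covers $k=2$. The restriction $b\le 2a$ is genuinely needed: for $a=1$ and $b=c=t$ large, the two-step bound fails by about $\frac{3}{20}t^2$. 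So it matters that one-step peeling handles $b\ge 2a$.

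The paper instead takes a counterexample of minimal length and, among those, a maximiser. For $k\ge 3$ it performs three local moves on the last three entries: merging $m_k$ into $m_{k-2}$, merging $m_{k-1}$ into $m_{k-2}$, and then shifting a small $\varepsilon$ from $m_{k-1}$ to $m_{k-2}$. These force $m_{k-1}>2m_{k-2}\ge 2m_k$ and then a strict improvement, which reduces everything to the two-term optimisation.

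The paper's perturbative argument needs no guessed certificate, but it relies on the existence of a maximiser and careful slope bookkeeping for $x\mapsto G(x,x,y)$. Your argument, with the certificate above, is shorter and handles all $k$ at once. The certificate also shows immediately that equality in the two-step bound forces $c=0$ and $2a=3b$, i.e.\ the extremal pair $(\frac35,\frac25)$.

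You should discard the variational/KKT paragraph. The assertion that inside a block every consecutive pair satisfies $n_{i+1}\le 2n_i$ is not derived: shifting mass from $n_{i+1}$ to $n_i$ gains at rate $2n_i$ but can lose at rate $\min(2n_{i+1},n_{i+2})>2n_i$. The reduction to $k\le 3$ there is also only anticipated, so that part is not a proof by itself.
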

\begin{proof}
For convenience, we write $f(x,y) = G(x,x,y)$ so that
$$
f(x,y) = \begin{cases} x^2 & \text{if }y \geq 2x, \\ xy-\frac{1}{4}y^2 & \text{if }y \leq 2x.\end{cases}
$$
Assume the contrary. Let $k$ be minimal such that there are $m_1, \cdots, m_k$ satisfying $m_1+\cdots+m_k \leq 1$ and
\[\sum_{i=1}^{k-1}f(m_i,m_{i+1})>\frac{1}{5}
\] For that particular value of $k$ take $m_1,m_2,\ldots ,m_k$ to be such that $\sum_{i=1}^{k-1}f(m_i,m_{i+1})$ is maximal. We claim that $k\le 2$. First, note that all of the $m_i$ are non-zero. Otherwise, we can increase the sum by removing zeros.

Assume that $k\ge 3$. We will consider various sequences $m_i'$ to derive a contradiction. First, we consider 
$$m_i' = \begin{cases}
m_i, &i \leq k - 3, \\
m_{k - 2} + m_k, &i = k - 2, \\ 
m_{k - 1}, & i = k - 1. \\
\end{cases}$$
Then we have
\begin{align*}
\sum_{i = 1}^{k - 2} f(m_i', m_{i + 1}') - \sum_{i = 1}^{k - 1} f(m_i, m_{i + 1}) 
=& (f(m_{k - 3}, m_{k - 2}') - f(m_{k - 3}, m_{k - 2})) \\
+& (f(m_{k - 2} + m_k, m_{k - 1}) - f(m_{k - 2}, m_{k - 1}) - f(m_{k - 1}, m_{k}))    .
\end{align*}
By minimality of $k$ the left hand side must be negative. On the right hand side, the first summand is non-negative since $f$ is non-decreasing in the second variable (note that if $k=3$ the first summand is absent). Therefore, we must have
$$f(m_{k - 2} + m_k, m_{k - 1}) - f(m_{k - 2}, m_{k - 1}) \leq f(m_{k - 1}, m_{k}).$$
This implies $m_{k - 1} > 2 m_{k - 2}$, for otherwise we have
$$f(m_{k - 2} + m_k, m_{k - 1}) - f(m_{k - 2}, m_{k - 1}) = m_k m_{k - 1}$$
while
$$f(m_{k - 1}, m_{k}) < m_{k - 1} m_{k}$$
which is a contradiction.

Next, we consider
$$m_i' = \begin{cases}
m_i, &i \leq k - 3, \\
m_{k - 2} + m_{k - 1}, &i = k - 2, \\ 
m_{k}, &i = k - 1.
\end{cases}$$
Then we have
\begin{align*}
\sum_{i = 1}^{k - 2} f(m_i', m_{i + 1}') - \sum_{i = 1}^{k - 1} f(m_i, m_{i + 1}) 
=& (f(m_{k - 3}, m_{k - 2}') - f(m_{k - 3}, m_{k - 2})) \\
+& (f(m_{k - 2} + m_{k - 1}, m_{k}) - f(m_{k - 2}, m_{k - 1}) - f(m_{k - 1}, m_{k}))    .
\end{align*}
Again, the first summand is non-negative, so we must have
$$f(m_{k - 2} + m_{k - 1}, m_{k}) - f(m_{k - 1}, m_{k}) \leq  f(m_{k - 2}, m_{k - 1})  = m_{k - 2}^2.$$
The slope of $f(m,n)$ in the first variable is equal to $\min (2m,n)$ so we have
$$f(m_{k - 2} + m_{k - 1}, m_{k}) - f(m_{k - 1}, m_{k}) \geq m_{k - 2} \min(2m_{k - 1}, m_k).$$
Therefore, we have $m_{k - 2}^2 \geq m_{k - 2} \min(m_{k - 1}, m_k)$, which implies that $m_{k - 2} \geq \min(m_{k - 1}, m_k)$. However, we have $m_{k - 1} > 2m_{k - 2}$, so we must have $m_{k - 2} \geq m_k$. Thus we get
$$m_{k - 1} > 2m_{k - 2} \geq 2 m_k.$$
Finally, we take $\varepsilon > 0$ sufficiently small, and let
$$m_i' = \begin{cases}
m_i, &i \leq k - 3, \text{ or } i=k\\
m_{k - 2} + \varepsilon, &i = k - 2, \\ 
m_{k - 1} - \varepsilon, &i = k - 1, \\
\end{cases}.$$
Then we have
\begin{align*}
&\sum_{i = 1}^{k - 1} f(m_i', m_{i + 1}') - \sum_{i = 1}^{k - 1} f(m_i, m_{i + 1}) \\
=& (f(m_{k - 3}, m_{k - 2}') - f(m_{k - 3}, m_{k - 2})) 
+ (f(m_{k - 2}', m_{k - 1}') - f(m_{k - 2}, m_{k - 1}) + f(m_{k - 1}', m_{k}) -  f(m_{k - 1}, m_{k})).  
\end{align*}
On the right hand side, the first term is again non-negative. For the second term, we can compute that
$$f(m_{k - 2}', m_{k - 1}') - f(m_{k - 2}, m_{k - 1}) = (m_{k - 2}')^2 - m_{k - 2}^2 \geq 2\varepsilon m_{k - 2}$$
while
$$f(m_{k - 1}', m_{k}) -  f(m_{k - 1}, m_{k}) = (m_{k - 1}' - m_{k - 1}) m_k = -\varepsilon m_k.$$
So their sum is at least $\varepsilon (2 m_{k - 2} - m_k) > 0$, contradiction.

We conclude that $k\le 2$. Let $b=m_2$. Then we obtain
$$\sum_{i = 1}^{k - 1} f(n_i, n_{i + 1}) \leq f(1 - b,b ).$$
As 
$$f(1 - b, b) = \begin{cases}
    b - \frac{5b^2}{4}, &b \leq \frac{2}{3} \\
    (1 - b)^2, &b \geq \frac{2}{3}
\end{cases}$$
it is clear that the maximum value of $f(1 - b, b)$ is achieved at $b=\frac{2}{5}$ and is equal to $\frac{1}{5}$. This gives us a contradiction as desired.
\end{proof}

\begin{remark}
Aaronson's work \cite{Aa} shows that $\gamma_{1,a,a+1} \leq 1/12+\eps$ if 
$$
a > (K_1K_2)^{K_1},
$$
where $K_1,K_2$ come from the regularity lemma:
$$
K_1 = \eps^{-C}, \ \ K_2 = \exp(\eps^{-C}).
$$
So his result implies that
$$
\gamma_{1,a,a+1} \leq \frac{1}{12} + O\Big(\frac{1}{(\log a)^c}\Big).
$$
\end{remark}

\section{Proof outline of Theorem \ref{thm:main}}\label{sec:pf-outline}

In this section we sketch the proof of Theorem \ref{thm:main}. Let $A \subset \Z$ be a finite subset. The first step is to use the structure theorem in \cite{green2008maximal} to reduce to the case when $A$ is a dense subset of an interval, summarized in the following proposition.

\begin{proposition}\label{prop:main-dense-case}
Let $a_1,a_2,a_3$ be fixed nonzero integers with $\gcd(a_1,a_2,a_3)=1$. Let $I \subset \Z$ be an interval of length $N$ and let $A \subset I$ be a subset. For any $\eps \in (0,1/2)$, there exists a positive integer $q = q(\eps)$, all of whose prime factors divide $a_1a_2a_3$, such that
$$
T_{a_1,a_2,a_3}(A) \leq \gamma_{a_1,a_2,a_3}(q)|A|^2 + \eps N^2,
$$
provided that $N$ is sufficiently large in terms of $\eps$.
\end{proposition}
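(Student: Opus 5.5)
We will prove Proposition \ref{prop:main-dense-case} by arithmetic regularity, reducing to a continuous optimisation problem, and then eliminating the "non-rational" part of the structure.

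\textbf{Step 1 (regularity).} Write $1_A = f_{\str} + f_{\sml} + f_{\unf}$ using the arithmetic regularity lemma (in the form of Green--Tao, or the structure theorem of \cite{green2008maximal}) applied on a cyclic group $\Z/N'\Z$ with $N'$ a prime of size $\asymp (|a_1|+|a_2|+|a_3|)N$, so that no wraparound occurs. Here $f_{\str}(n) = F(n \bmod q_0, n/N, \theta n)$ with $F: \Z/q_0\Z\times\R\times(\R/\Z)^d\to[0,1]$ Lipschitz with complexity bounded in $\eps$, $\|f_{\sml}\|_2\le\eps$, and $\|f_{\unf}\|_{U^2}$ tiny. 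Since the equation has three variables, the trilinear count is controlled by the $U^2$ norm, so
$$T_{a_1,a_2,a_3}(A) = \sum_{a_1x_1+a_2x_2+a_3x_3=0} f_{\str}(x_1)f_{\str}(x_2)f_{\str}(x_3) + O(\eps N^2).$$
By equidistribution of the polynomial (here linear) orbit $(n \bmod q_0, \theta n)$ on an irrational subtorus, this main term equals $N^2$ times an integral of $F\otimes F\otimes F$ over the subgroup of $\Z/q_0\Z\times\R\times(\R/\Z)^d$ cut out by $a_1u_1+a_2u_2+a_3u_3=0$, up to $o(N^2)$.

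\textbf{Step 2 (remove the torus and symmetrise).} For each fixed residue profile and fixed torus coordinates, Lemma \ref{lem:G-sumbound} (equivalently the Lev rearrangement inequality \cite{lev1998number}) shows that replacing each fibre $F(r,\cdot,t)$ on $\R$ by a centred interval of the same mass does not decrease the count. Next, for the torus variables: the torus subgroup $\{a_1t_1+a_2t_2+a_3t_3=0\}$ in $(\R/\Z)^d$ plays the same role as the residue classes. The key claim is that the count on $\Z/q_0\Z\times(\R/\Z)^d$ can be bounded by the count on a finite quotient $\Z/q\Z$: we approximate $(\R/\Z)^d$ by $(\frac1M\Z/\Z)^d$ for large $M$ (the function is Lipschitz, so this costs $O(\eps)$), producing a finite abelian group $H=\Z/q_0\Z\times(\Z/M\Z)^d$ and a function $g$ on it, with count of the form
$$\frac{1}{|a_1a_2a_3|}\sum_{a_1h_1+a_2h_2+a_3h_3=0} G(|a_1|g(h_1),|a_2|g(h_2),|a_3|g(h_3)).$$

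\textbf{Step 3 (reduce to cyclic groups with primes dividing $a_1a_2a_3$).} Decompose $H=H'\times H''$ with $|H'|$ having only primes dividing $a_1a_2a_3$ and $|H''|$ coprime to $a_1a_2a_3$. On $H''$ the maps $h\mapsto a_ih$ are automorphisms, so fibring over $H'$ and applying Lemma \ref{lem:G-sumbound}-type concavity (sum over the $H''$ coordinate of $G$ is at most $G$ of the totals, proved the same way via the rearrangement inequality on the group $H''$ with the rescaling) collapses $H''$. Then $H'$ is a product of cyclic $p$-groups; embed the relevant structure into a single cyclic group: I would argue that any solution-count on $\prod \Z/p^{k_j}\Z$ is dominated by one on $\Z/q\Z$ for suitable $q$ (e.g. via a homomorphism $\Z\to H'$ that is surjective is impossible in general, so instead use that the count only depends on the subgroup lattice and apply Freiman-type isomorphism with $q$ a large power). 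This step is the main obstacle: handling non-cyclic $p$-parts and the irrational torus directions rigorously. What I would try first is to avoid the discretisation entirely by realising the torus-plus-residue structure directly as a set in $\Z$ via a generalised arithmetic progression and rearranging coordinate-by-coordinate, showing that only the $a_1a_2a_3$-torsion directions can give a gain.

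Finally, normalising $\sum g = |A|/N$ and using homogeneity of $G$ gives $T_{a_1,a_2,a_3}(A)\le\gamma_{a_1,a_2,a_3}(q)|A|^2+O(\eps)N^2$; rescaling $\eps$ finishes.
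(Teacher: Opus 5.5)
\textbf{Gap in Step 3.} Your Steps 1 and 2 are fine in substance. (The fibrewise rearrangement on $\R$ is really the $d=0$ case of Lemmas \ref{lem:TF-TFa} and \ref{lem:max-T111}, a Riesz--Sobolev-type inequality, not Lemma \ref{lem:G-sumbound}.) But the argument breaks exactly where you flag it, and the gap is substantive.

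The target bound involves $\gamma_{a_1,a_2,a_3}(q)$ for a \emph{cyclic} group. Discretising $(\R/\Z)^d$ by $(\tfrac1M\Z/\Z)^d$ with unspecified $M$ instead produces $H=\Z/q_0\Z\times(\Z/M\Z)^d$, whose $p$-parts for $p\mid a_1a_2a_3$ are in general non-cyclic. You give no proof that the weighted count on such an $H'$ is bounded by some $\gamma_{a_1,a_2,a_3}(q)$.

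The mechanism you suggest cannot supply one. Take a Freiman isomorphism of order at least $2$ from, say, $(\Z/p\Z)^2$ onto a subset $B$ of a cyclic group. It forces $|B+B|=|B|$, so $B$ is a coset of a subgroup of order $p^2$. After translation the map is then a group isomorphism onto a cyclic group, which is impossible. So Step 3 as written does not reach the claimed inequality.

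\textbf{The missing idea.} The torus should not be discretised at all, because it carries no arithmetic gain. The paper fibres only over $r\in\Z/q_0\Z$, which is cyclic because it comes directly from the regularity lemma. It then handles $\R\times(\R/\Z)^d$ in one go:
\begin{itemize}
\item Multiplication by $a$ is surjective with finite kernel on $\R\times(\R/\Z)^d$. So with $F^{(a)}(y)=\E_{x:ax=y}F(x)$ one has $\wt{T}_{a_1,a_2,a_3}(F_1,F_2,F_3)=|a_1a_2a_3|^{-1}\wt{T}_{1,1,1}(F_1^{(a_1)},F_2^{(a_2)},F_3^{(a_3)})$ and $\int F^{(a)}=|a|\int F$ (Lemma \ref{lem:TF-TFa}).
\item For $y_1+y_2+y_3=0$ on the whole group $\R\times(\R/\Z)^d$ one has $\wt{T}_{1,1,1}\le G(\delta_1,\delta_2,\delta_3)$ (Lemma \ref{lem:max-T111}). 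This is proved by pulling back to $\Z$ along an irrational orbit, random sampling, and Lev's rearrangement inequality.
\item Applying this to each fibre $F_r$ with $g(r)=(q_0\delta)^{-1}\int F_r$ gives $\wt{T}(F)\le\gamma_{a_1,a_2,a_3}(q_0)\delta^2$. Lemma \ref{lem:q-divide-a1a2a3} then removes the primes of $q_0$ not dividing $a_1a_2a_3$.
\end{itemize}

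\textbf{Repairing your own route.} Alternatively, keep your discretisation but choose $M$ coprime to $a_1a_2a_3$ (e.g.\ a large prime). This is legitimate: since $\gcd(a_1,a_2,a_3)=1$, the solution set $\{a_1t_1+a_2t_2+a_3t_3=0\}$ is a connected subtorus of $((\R/\Z)^d)^3$, so its $M$-torsion points equidistribute for every large $M$. Then all of $(\Z/M\Z)^d$ sits in $H''$, which collapses by iterating Lemma \ref{lem:G-sumbound} over its cyclic factors. What remains, $H'$, is just the cyclic $a_1a_2a_3$-part of $\Z/q_0\Z$.
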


The deduction of Theorem \ref{thm:main} from Proposition \ref{prop:main-dense-case} is carried out in Section \ref{sec:proof1}.

To prove Proposition \ref{prop:main-dense-case}, we start with applying (the abelian case of) the arithmetic regularity lemma in \cite{GreenTao-regularity} (see also the note \cite{Eberhard-note}) to appoximate $1_A$ by a function of the form
$$
n\mapsto F(n\Mod{q}, \frac{n}{N}, n\theta)
$$
for some positive integer $q$, some ``highly irrational" $\theta \in (\R/\Z)^d$ for some positive integer $d$, and some Lipschitz function $F$ on $X := \Z/q\Z \times \R \times (\R/\Z)^d$. For example, a set $A\subset \Z$ of the form
$$
A = \bigcup_{r=1}^q \Big\{n \equiv r\Mod{q}: \Big|\frac{n}{N}\Big| \leq \frac{g(r)}{2}\Big\}
$$
should be modeled by the function $F = 1_{\wt{A}}$ where $\wt{A} \subset X$ is defined by
\begin{equation}\label{eq:wtA}
\wt{A} = \bigcup_{r=1}^q \left(\{r\} \times [-g(r)/2,g(r)/2] \times (\R/\Z)^d\right)
\end{equation}
Using this approximation, we will reduce the problem of maximizing $T_{a_1,a_2,a_3}(A)$ to the corresponding problem of maximizing $\wt{T}_{a_1,a_2,a_3}(\wt{A})$, where $\wt{A} \subset X$ and $\wt{T}(\wt{A})$ ``counts" the solutions to $a_1y_1+a_2y_2+a_3y_3 = 0$ with $y_1,y_2,y_3 \in \wt{A}$.

 The counting operator $\wt{T}_{a_1,a_2,a_3}$ is properly defined as follows. Note that there is an obvious metric on $X$, namely the product metric of the discrete metric on $\Z/q\Z$ and the Euclidean metrics on $\R$ and $(\R/\Z)^d$, and also an obvious measure on $X$ which we always denote by $\mu$, which is the product of the probability measure on $\Z/q\Z$ (which assigns mass $1/q$ to each element of $\Z/q\Z$) and the Lebesgue measures on $\R$ and $(\R/\Z)^d$.

\begin{definition}\label{def:Ttilde}
Let $a_1,a_2,a_3$ be nonzero integers.
\begin{enumerate}
\item  For measurable functions $F_1,F_2,F_3$ on $(\R/\Z)^d$, we define
$$
\wt{T}_{a_1,a_2,a_3}(F_1,F_2,F_3) := \int F_1(y_1) F_2(y_2) \Big(\E_{y_3: a_1y_1+a_2y_2+a_3y_3=0} F_3(y_3)\Big) d\mu(y_1) d\mu(y_2).
$$
\item For finitely supported measurable function $F_1,F_2,F_3$ on $\R \times (\R/\Z)^d$, we define
$$
\wt{T}_{a_1,a_2,a_3}(F_1,F_2,F_3) = \frac{1}{|a_3|}\int F_1(y_1) F_2(y_2) \Big(\E_{y_3: a_1y_1+a_2y_2+a_3y_3=0} F_3(y_3)\Big) d\mu(y_1) d\mu(y_2),
$$
\item 
Suppose that $\gcd(a_1,a_2,a_3)=1$. For finitely supported measurable functions $F_1,F_2,F_3$ on $X = \Z/q\Z \times \R \times (\R/\Z)^d$, we define
$$
\wt{T}_{a_1,a_2,a_3}(F_1,F_2,F_3) := \frac{1}{q^2} \sum_{\substack{r_1,r_2,r_3\Mod{q} \\ a_1r_1+a_2r_2+a_3r_3\equiv 0\Mod{q}}} \wt{T}_{a_1,a_2,a_3}(F_1(r_1,\cdot), F_2(r_2,\cdot), F_3(r_3,\cdot))
$$
\end{enumerate}
If $F_1=F_2=F_3 = F$ we write $\wt{T}_{a_1,a_2,a_3}(F)$ for $\wt{T}_{a_1,a_2,a_3}(F_1,F_2,F_3)$.
\end{definition}

In (1) and (2), given $y_1,y_2$, there are $|a_3|^d$ values of $y_3$ in the inner average. 
In (3), the number of solutions to $a_1r_1+a_2r_2+a_3r_3\equiv 0\Mod{q}$ is $q^2$ (since $\gcd(a_1,a_2,a_3)=1$), and thus there are $q^2$ terms in the sum. One would expect $\wt{T}_{a_1,a_2,a_3}(F)$ to be symmetric in $a_1,a_2,a_3$. This is not obvious from the definition, but will be confirmed by Lemma \ref{lem:TF-TFa} below.

In Section \ref{sec:proof2} we will use the arithmetic regularity lemma to prove the following proposition, allowing us to pass from counting in $\Z$ to counting in $X$.

\begin{proposition}\label{prop:proof2}
Let $a_1,a_2,a_3$ be fixed nonzero integers with $\gcd(a_1,a_2,a_3)=1$. Let $I \subset \Z$ be an interval of length $N$ and let $A \subset I$ be a subset. For any $\eps \in (0,1/2)$, there exist positive integers $q = q(\eps)$, $d = d(\eps)$, and a finitely supported measurable function $F$ on $X := \Z/q\Z\times \R \times (\R/\Z)^d$ taking values in $[0,1]$ with $\int F = |A|/N + O(\eps)$, such that
$$
T_{a_1,a_2,a_3}(A) = (\wt{T}_{a_1,a_2,a_3}(F) + O(\eps))N^2,
$$
provided that $N$ is sufficiently large in terms of $\eps$.
\end{proposition}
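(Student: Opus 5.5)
We apply the abelian arithmetic regularity lemma of \cite{GreenTao-regularity} (in the form of \cite{Eberhard-note}) to $1_A$ on the interval $I$. Translating, we may take $I=[0,N)$ or a centered interval; only the scaling $n/N$ matters. Given $\eps$, and a growth function $\mathcal{F}$ chosen later, the lemma yields a complexity bound $M=M(\eps,\mathcal F)$ and a decomposition $1_A = f_{\str}+f_{\sml}+f_{\unf}$ with the following properties.
\begin{itemize}
\item[(i)] $f_{\str}(n) = F_0(n\Mod{q}, n/N, n\theta)$, with $q,d\le M$, $F_0$ Lipschitz of constant $\le M$ taking values in $[0,1]$, and $\theta\in(\R/\Z)^d$ being $(\mathcal F(M),N)$-irrational.
\item[(ii)] $\|f_{\sml}\|_2\le \eps\|1_I\|_2$.
\item[(iii)] $\|f_{\unf}\|_{U^2}\le \mathcal F(M)^{-1}$.
\end{itemize}
Since the relevant count is a $3$-term linear form with pairwise independent coefficients, the $U^2$ norm (equivalently, Fourier uniformity) suffices. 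Before applying the lemma, I would replace $\theta$ by a scaled version so that the $a_i$-dilates are compatible. I would also enlarge $q$ to a multiple of $\lcm$-related quantities if needed. The claim that the prime factors of $q$ divide $a_1a_2a_3$ is not required in this proposition, so that restriction is deferred.

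\textbf{Step 1: expanding the count.} Expand $T_{a_1,a_2,a_3}(A)=\sum_{a_1n_1+a_2n_2+a_3n_3=0}1_A(n_1)1_A(n_2)1_A(n_3)$ trilinearly. Any term containing $f_{\unf}$ is bounded by the generalized von Neumann inequality: $|\Lambda(f,g,h)|\ll \|f\|_{U^2}N^2$ for $1$-bounded functions supported on $O(N)$, with constants depending on the $a_i$. This applies because $f_{\str}$ and $f_{\sml}$ are bounded by $O(1)$, which one arranges in the standard version of the lemma. Terms containing $f_{\sml}$ are bounded by Cauchy--Schwarz. Fixing $n_1,n_2$ determines $n_3$, so $|\Lambda(f,g,h)|\le \|f\|_2\|g\|_2\|h\|_\infty\ll \eps N^2$. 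Hence $T(A)=\Lambda(f_{\str},f_{\str},f_{\str})+O(\eps N^2)$, once $\mathcal F$ is chosen so that $\mathcal F(M)^{-1}\le\eps$.

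\textbf{Step 2: the main term (expected main obstacle).} We must show
$$\Lambda(f_{\str},f_{\str},f_{\str}) = (\wt T_{a_1,a_2,a_3}(F_0)+O(\eps))N^2.$$
Split by residues $r_i$ modulo $q$ with $\sum a_ir_i\equiv0$, as in the proof of Lemma \ref{lem:local-sets}. Parametrize the solutions $(n_1,n_2)$ as a translate of a rank-$2$ lattice; each $n_3$ is then determined. The sum becomes a sum over lattice points $(n_1,n_2)$ of $\Phi(n_1/N,n_2/N,n_1\theta,n_2\theta)$, where $\Phi$ is Lipschitz. Because $\theta$ is highly irrational, the sequence $(n_1\theta,n_2\theta)$ over lattice points in boxes of side $\asymp \eps N/M$ equidistributes in $(\R/\Z)^{2d}$. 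This uses Weyl's criterion, with quantitative irrationality ruling out small characters $k_1\cdot\theta$ with $k_1$ in a bounded set. One subtlety is that $n_3\theta$ is determined only through $a_3n_3\theta=-(a_1n_1+a_2n_2)\theta$. I would therefore track $n_3\theta$ as an $|a_3|^d$-valued choice, which is equidistributed among the $a_3$-th roots. This is exactly why Definition \ref{def:Ttilde} averages over $y_3$ with $a_1y_1+a_2y_2+a_3y_3=0$. To handle it, I would write $n_i=a_3 m_i+$(fixed residue) where convenient, so that $n_3\theta$ becomes a linear function of $(m_1\theta,m_2\theta)$ plus the fractional information $\theta/a_3$. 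Equidistribution of $(m_1\theta,m_2\theta,\dots)$ then yields the uniform average over the fibre. Riemann-summing the $\R$-coordinates, with the lattice covolume $q^2|a_1a_2a_3|$ computed as in Lemma \ref{lem:local-sets}, gives the normalizing factors $1/q^2$ and $1/|a_3|$. The Lipschitz error is $O(M\cdot \eps/M)$ after choosing the box size appropriately, and the equidistribution error is $o(1)$ provided $N$ is large relative to $M$.

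\textbf{Step 3: normalization.} Finally, set $F=F_0$ restricted to the support corresponding to $I$; it takes values in $[0,1]$ and is finitely supported. The same equidistribution argument applied to the linear sum $\sum_n f_{\str}(n)$ gives $\int F = |A|/N+O(\eps)$. Here we use that $\sum f_{\unf}$ is small, which follows from its $U^2$ smallness (its zero Fourier coefficient is controlled by $\|f_{\unf}\|_{U^2}$), and that $\sum f_{\sml}=O(\eps N)$ by Cauchy--Schwarz. Rescaling $\eps$ by a constant depending on $a_1,a_2,a_3$ completes the proof.
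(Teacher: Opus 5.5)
Your proposal is correct in substance and follows essentially the same route as the paper. The shared steps are the regularity lemma, $\ell^2$ and $U^2$ control of all terms involving $f_{\sml}$ or $f_{\unf}$, a counting lemma for $f_{\str}$ obtained by splitting into residue classes mod $q$ and short pieces in the $\R$-coordinate, and $\int F$ recovered from the average of $f_{\str}$.

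The only variation is in the torus step. The paper Fourier-expands the $F_i$ and checks that only the frequencies $(a_1m,a_2m,a_3m)$ survive (Lemma~\ref{lem:equi-dist}). You instead write $n_i=qa_3k_i+c_i$ ($i=1,2$), so that $(n_1\theta,n_2\theta,n_3\theta)$ is an affine image of the equidistributed pair $(k_1\theta,k_2\theta)$. The pushforward of Haar measure under this map is Haar measure on the subtorus $\{a_1y_1+a_2y_2+a_3y_3=0\}$, which is connected because $\gcd(a_1,a_2,a_3)=1$, and this is exactly the fibre average in Definition~\ref{def:Ttilde}.

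Two small corrections. First, the equidistribution error is controlled by the irrationality parameter $\mathcal F(M)$, not by $N$: you must choose the growth function so that $\mathcal F(M)$ is large in terms of $M,q,d,\eps$, since taking $N$ large alone does not help. Second, your preliminary remarks about rescaling $\theta$ or enlarging $q$ are unnecessary and can be dropped.
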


In Section \ref{sec:proof3} we will study the problem of maximizing $\wt{T}_{a_1,a_2,a_3}(F)$ and show that the maximizer must take the form $F = 1_{\wt{A}}$ for some $\wt{A}$ of the form \eqref{eq:wtA}, thus connecting the maximum value of $\wt{T}_{a_1,a_2,a_3}(F)$ with $\gamma_{a_1,a_2,a_3}(q)$.

\begin{proposition}\label{prop:proof3}
Let $a_1,a_2,a_3$ be fixed nonzero integers with $\gcd(a_1,a_2,a_3)=1$. Let $X = \Z/q\Z \times \R \times (\R/\Z)^d$ for some positive integers $q,d$. Then for any finitely supported measurable function $F: X \rightarrow [0,1]$ with $\delta = \int F$, we have
$$
\wt{T}_{a_1,a_2,a_3}(F) \leq \gamma_{a_1,a_2,a_3}(q) \cdot \delta^2.
$$
\end{proposition}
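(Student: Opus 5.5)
The plan is a symmetrization argument: rearrange $F$ fibre by fibre into centred intervals, check that $\wt{T}_{a_1,a_2,a_3}$ does not decrease, and then read off the resulting quantity as $\gamma_{a_1,a_2,a_3}(g)\delta^2$ for a suitable $g$.

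\textbf{Step 1: decompose by residues.} Write $F_r = F(r,\cdot)$ on $\R\times(\R/\Z)^d$ and set
$$m(r) = \int F_r \, d\mu,$$
so that $\delta = \frac1q\sum_r m(r)$. By Definition \ref{def:Ttilde}(3), $\wt{T}(F)$ is $q^{-2}$ times a sum over admissible triples $(r_1,r_2,r_3)$ of $\wt{T}_{a_1,a_2,a_3}(F_{r_1},F_{r_2},F_{r_3})$. It therefore suffices to prove the fibrewise inequality
$$\wt{T}_{a_1,a_2,a_3}(F_1,F_2,F_3)\le \frac{1}{|a_1a_2a_3|}\,G\bigl(|a_1|m_1,|a_2|m_2,|a_3|m_3\bigr)$$
for $[0,1]$-valued, finitely supported $F_i$ on $\R\times(\R/\Z)^d$ with $m_i=\int F_i$.

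\textbf{Step 2: prove the fibrewise inequality.} Integrate out the torus. For fixed real coordinates $t_1,t_2$ (with $t_3$ forced by $a_1t_1+a_2t_2+a_3t_3=0$), the inner torus integral is $\wt{T}$ on $(\R/\Z)^d$ of the slices $F_i(t_i,\cdot)$. I bound it by $\min$ of the pairwise products of the slice masses $h_i(t_i)=\int F_i(t_i,\theta)\,d\theta\in[0,1]$. This uses that the map $(y_1,y_2)\mapsto y_3$ is measure-preserving in the right averaged sense, and that each $F_i\le1$.

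This reduces the problem to the real line: bound
$$\frac{1}{|a_3|}\int h_1(t_1)\,h_2(t_2)\,h_3(t_3)\ \text{(relaxed)}\,dt_1\,dt_2$$
by $G$. Since the $\min$-of-products bound is not multilinear, I would instead avoid the relaxation and use a layer-cake decomposition of $F_i$ into indicator sets. The cleaner route is to view $\wt{T}$ on $\R\times(\R/\Z)^d$ as the limit of discrete counts $T_{a_1,a_2,a_3}(A^{(1)},A^{(2)},A^{(3)})$ in $\Z$. Approximate each $F_i$ by sets $A^{(i)}=\{n : F_i(n/N, n\theta)\text{ large}\}$, with random thinning for fractional values, using a generic $\theta$ and Weyl equidistribution, exactly as in the proof of Lemma \ref{lem:local-sets}. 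Then rescale $B^{(i)}=a_i\cdot A^{(i)}$ and apply the rearrangement inequality \cite[Theorem 1]{lev1998number}, as in Lemma \ref{lem:G-sumbound}. This gives $T\lesssim G(|A^{(1)}|,|A^{(2)}|,|A^{(3)}|)$ with $|A^{(i)}|\approx N m_i$, and after normalizing, the claimed bound with the $1/|a_1a_2a_3|$ factor. (The normalization should be checked against the case $F_i=$ centred interval indicators, where Lemma \ref{lem:local-sets} gives equality.)

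\textbf{Step 3: reassemble.} Sum the fibrewise bounds and set $g(r)=m(r)/(q\delta)$, so that $\sum_r g(r)=1$. Each $m(r)\le1$, but $g(r)$ may exceed $1$; since the maximum in Theorem \ref{thm:main} is taken over $g\in[0,1]$, I would either note that the definition of $\gamma(q)$ tolerates this by homogeneity, or observe that $g(r)\le 1$ whenever $\delta\ge1/q$ and handle small $\delta$ by the same formula. By the degree-2 homogeneity of $G$,
$$\frac{1}{q^2|a_1a_2a_3|}\sum G\bigl(|a_i|\, m(r_i)\bigr)=\delta^2\,\gamma_{a_1,a_2,a_3}(g)\le\gamma_{a_1,a_2,a_3}(q)\,\delta^2.$$

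\textbf{Main obstacle.} The main obstacle is Step 2: transferring the discrete rearrangement inequality to the continuous-torus setting, which requires the discretization/equidistribution approximation with controlled error, together with a careful normalization check. I would try first to do this directly in the continuous setting via a Riesz-type rearrangement on $\R$, after averaging over the torus fibres using Fubini.
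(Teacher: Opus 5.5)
Your Steps 1 and 3 match the paper: the fibrewise target inequality is exactly what the paper establishes, and the reassembly with $g(r)=m(r)/(q\delta)$ is the same. Incidentally, $g(r)\le 1$ is automatic since $g\ge 0$ and $\sum_r g(r)=1$, whereas $m(r)\le 1$ need not hold.

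The gap is in Step 2. You discretize $F_i$ directly into sets $A^{(i)}\subset\Z$ with $|A^{(i)}|\approx Nm_i$ and apply Lev's inequality to $B^{(i)}=a_i\cdot A^{(i)}$. This gives $T_{1,1,1}(B^{(1)},B^{(2)},B^{(3)})\lesssim G(Nm_1,Nm_2,Nm_3)$, that is, only
\[
\wt T_{a_1,a_2,a_3}(F_1,F_2,F_3)\le G(m_1,m_2,m_3).
\]
No normalization turns this into $\frac{1}{|a_1a_2a_3|}G(|a_1|m_1,|a_2|m_2,|a_3|m_3)$, which is in general strictly smaller. Lemma \ref{lem:G-sumbound} with $q=1$ is precisely the inequality between the two.

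The check you propose exposes this. Take $(a_1,a_2,a_3)=(1,1,-3)$, $q=1$, $F=1_{[-1/2,1/2]\times(\R/\Z)^d}$. Then $\wt T(F)=\frac13G(1,1,3)=\frac13=\gamma_{1,1,-3}(1)$, while your route only yields $\wt T(F)\le G(1,1,1)=\frac34$.

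The loss is structural. In $\Z$ the set $a_i\cdot A^{(i)}$ has the same size as $A^{(i)}$ but lives in $a_i\Z$, so it cannot approach the interval extremizers of Lev's inequality. Feeding the weaker fibrewise bound into Step 3 would give $\wt T(F)\le\frac{1}{q^2}\sum G(m(r_1),m(r_2),m(r_3))$, not $\gamma_{a_1,a_2,a_3}(q)\delta^2$.

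The missing idea is the dilation identity of Lemma \ref{lem:TF-TFa}, carried out in the continuous group before any discretization:
\begin{itemize}
\item On $\R\times(\R/\Z)^d$, multiplication by $a$ is onto: it scales $\R$ by $|a|$ and is an $|a|^d$-to-one covering of the torus.
\item Hence $F^{(a)}(y)=\E_{x:ax=y}F(x)$ is still $[0,1]$-valued but has $\int F^{(a)}=|a|\int F$.
\item The change of variables $y_i=a_ix_i$ gives $\wt T_{a_1,a_2,a_3}(F_1,F_2,F_3)=\frac{1}{|a_1a_2a_3|}\wt T_{1,1,1}(F_1^{(a_1)},F_2^{(a_2)},F_3^{(a_3)})$.
\end{itemize}
Only after this does the paper run your discretize--thin--rearrange argument, and only for the equation $x_1+x_2+x_3=0$ (Lemma \ref{lem:max-T111}), where Lev's bound is sharp. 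That produces exactly $\frac{1}{|a_1a_2a_3|}G(|a_1|m_1,|a_2|m_2,|a_3|m_3)$.

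Your abandoned first route would have produced the right $|a_i|$ scalings: collapse the torus, then substitute $x_i=a_it_i$ on $\R$. But you would still need a rearrangement inequality for the resulting non-multilinear integrand, and the dilation identity sidesteps that.
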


Finally, in Section \ref{sec:proof3} we will also prove the following lemma which allows us to restrict to those $q$ all of whose prime factors divide $a_1a_2a_3$ in the computation of $\gamma_{a_1,a_2,a_3}(q)$.

\begin{lemma}\label{lem:q-divide-a1a2a3}
Let $a_1,a_2,a_3$ be fixed nonzero integers with $\gcd(a_1,a_2,a_3)=1$. Let $q,m$ be positive integers with $(q,m)=1$ and $(m,a_1a_2a_3)=1$. Then $\gamma_{a_1,a_2,a_3}(qm) = \gamma_{a_1,a_2,a_3}(q)$.
\end{lemma}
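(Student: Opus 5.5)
We prove the two inequalities $\gamma_{a_1,a_2,a_3}(q)\le\gamma_{a_1,a_2,a_3}(qm)$ and $\gamma_{a_1,a_2,a_3}(qm)\le\gamma_{a_1,a_2,a_3}(q)$ separately. The Chinese remainder theorem identifies $\Z/qm\Z$ with $\Z/q\Z\times\Z/m\Z$, and we write residues as pairs $(r,s)$.

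For $\gamma(q)\le\gamma(qm)$, take a function $g$ on $\Z/q\Z$ and lift it to $\tilde g(r,s)=g(r)/m$. Then $\sum\tilde g=1$. Since $G$ is homogeneous of degree $2$,
$$G(|a_1|g(r_1)/m,\dots)=m^{-2}G(|a_1|g(r_1),\dots).$$
For each solution $(r_1,r_2,r_3)$ modulo $q$, the number of solutions $(s_1,s_2,s_3)$ of $a_1s_1+a_2s_2+a_3s_3\equiv 0\Mod{m}$ is $m^2$. This uses only that some $a_i$ is invertible modulo $m$, which holds because $(m,a_1a_2a_3)=1$. Hence $\gamma(\tilde g)=\gamma(g)$.

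For the reverse inequality, take $g$ on $\Z/qm\Z$ and set $f_i(s)=g(r,s)$ on $\Z/m\Z$ for each fixed $r$. The sum defining $\gamma(g)$ splits over solutions $(r_1,r_2,r_3)$ modulo $q$. Each inner sum over $(s_1,s_2,s_3)$ modulo $m$ has the shape treated in Lemma \ref{lem:G-sumbound}, except for the normalizing factor $1/|a_1a_2a_3|$. Here the key point is that $a_i$ is invertible mod $m$. I would therefore like a version of Lemma \ref{lem:G-sumbound} for coefficients coprime to $m$ in which the constant $1/|a_1a_2a_3|$ is dropped, namely
$$\sum_{\substack{s_i \Mod{m}\\ a_1s_1+a_2s_2+a_3s_3\equiv0}} G(|a_1|f_1(s_1),|a_2|f_2(s_2),|a_3|f_3(s_3)) \le G(|a_1|F_1,|a_2|F_2,|a_3|F_3),$$
with $F_i=\sum_s f_i(s)$.

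To prove this inequality, substitute $t_i=a_is_i$, which is a bijection modulo $m$, so the constraint becomes $t_1+t_2+t_3\equiv 0\Mod{m}$. Applying Lemma \ref{lem:G-sumbound} with coefficients $(1,1,1)$ and modulus $m$ to the functions $h_i(t)=|a_i|f_i(a_i^{-1}t)$ gives exactly the displayed bound, since the $\gcd$ condition is trivial and $|a_1a_2a_3|=1$ there.

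Summing this bound over $(r_1,r_2,r_3)$ and dividing by $|a_1a_2a_3|$ gives $\gamma(g)\le\gamma(g')$, where $g'(r)=\sum_s g(r,s)$. This $g'$ takes values in $[0,1]$ and satisfies $\sum g'=1$. Therefore $\gamma(qm)\le\gamma(q)$.

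The main care needed is this reduction of the inner sum to Lemma \ref{lem:G-sumbound} with unit coefficients. In particular, one must check that the counting of solutions modulo $qm$ factors exactly through the Chinese remainder theorem, so that no extra multiplicity arises.
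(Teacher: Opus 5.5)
Your proof is correct and follows the paper's argument. For the nontrivial direction the paper also splits the sum via the Chinese remainder theorem and substitutes $s_i'=a_is_i$ (using $(m,a_1a_2a_3)=1$). It then applies Lemma \ref{lem:G-sumbound} with unit coefficients modulo $m$, bounding each inner sum by $G(|a_1|\wt{g}(r_1),|a_2|\wt{g}(r_2),|a_3|\wt{g}(r_3))$ with $\wt{g}(r)=\sum_s g(r,s)$. Your added justification of the direction the paper calls clear, via the lift $g(r)/m$ and the degree-$2$ homogeneity of $G$, is also valid.
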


Clearly Proposition \ref{prop:main-dense-case} follows by combining Propositions \ref{prop:proof2} and \ref{prop:proof3} and using Lemma~\ref{lem:q-divide-a1a2a3}.

\section{Reducing to the dense case}\label{sec:proof1}

In this section we deduce Theorem \ref{thm:main} assuming Proposition \ref{prop:main-dense-case}. Our goal is to prove that for every $\eps > 0$, there exists a positive integer $q = q(\eps)$, all of whose prime factors divide $|a_1a_2a_3|$, such that we have
$$
T_{a_1,a_2,a_3}(A) \leq (\gamma_{a_1,a_2,a_3}(q) + O(\eps)) |A|^2
$$
for all finite subsets $A \subset \Z$ with $|A|$ sufficieintly large in terms of $\eps$.
Let $\eps'>0$ be a constant sufficiently small in terms of $\eps$. We apply Proposition 3.2 from \cite{green2008maximal} to obtain a partition $A = A_1 \cup A_2 \cup \cdots \cup A_n \cup A_0$ satisfying the following conditions:
\begin{itemize}
\item (Components are large) $|A_i| \gg_{\eps} |A|$ for each $1 \leq i \leq n$;
\item (Components are structured) $|A_i+A_i| \ll_{\eps,\eps'} |A_i|$ for each $1 \leq i \leq n$;
\item (Different components do not communicate) $E(\lambda_i\cdot A_i, \lambda_j \cdot A_j) \leq \eps' |A_i|^{3/2}|A_j|^{3/2}$ for all $1 \leq i < j \leq n$ and $\lambda_i,\lambda_j \in \{a_1, a_2, a_3\}$;
\item (Small noise term) $E(\lambda_0\cdot A_0, \lambda \cdot A) \leq \eps^2 |A|^3$ for all $\lambda_0,\lambda \in \{a_1, a_2, a_3\}$.
\end{itemize}
In particular we have $n \ll_{\eps} 1$.

\begin{lemma}\label{lem:TA-TAi}
Let the notations and assumptions be as above. We have
$$
T_{a_1,a_2,a_3}(A) = \sum_{i=1}^n T_{a_1,a_2,a_3}(A_i) + O(\eps |A|^2).
$$
\end{lemma}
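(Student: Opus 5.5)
We expand $1_A = \sum_{i=0}^n 1_{A_i}$, using that the $A_i$ partition $A$. Then $T_{a_1,a_2,a_3}(A)$ becomes the sum of $T_{a_1,a_2,a_3}(A_{i_1},A_{i_2},A_{i_3})$ over all triples $(i_1,i_2,i_3)\in\{0,\dots,n\}^3$. Here $T(B_1,B_2,B_3)$ counts solutions with $x_j\in B_j$. The diagonal triples $i_1=i_2=i_3\ge 1$ give the main term $\sum_{i\ge1} T_{a_1,a_2,a_3}(A_i)$. It remains to show that every other triple contributes $O(\eps|A|^2)$ in total. Since $n\ll_\eps 1$, there are $O_\eps(1)$ such triples, so for the non-communicating terms we need a bound of the form $o_{\eps'\to0}(1)\cdot|A|^2$ per triple. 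For the noise terms the bound must be uniform, with only a bounded number of terms involved.

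The basic tool is Cauchy--Schwarz through additive energy. Write $T(B_1,B_2,B_3)=\sum_x r_{a_1B_1+a_2B_2}(x)\,1_{-a_3B_3}(x)$. By Cauchy--Schwarz this is at most
$$
E(a_1\cdot B_1,a_2\cdot B_2)^{1/2}|B_3|^{1/2},
$$
using $\sum_x r_{a_1B_1+a_2B_2}(x)^2=E(a_1\cdot B_1,a_2\cdot B_2)$. The same holds for any choice of which two of the three sets play the roles of $B_1,B_2$.

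\textbf{Noise terms.} Suppose some index, say $i_j=0$. Put $A_0$ in one of the two energy slots and bound the other slot by the full set $A$, using monotonicity of $T$ in each argument. This gives
$$
T\le E(\lambda_0\cdot A_0,\lambda\cdot A)^{1/2}|A|^{1/2}\le \eps|A|^{3/2}|A|^{1/2}=\eps|A|^2 .
$$
Here we invoke the small-noise hypothesis. Summing over the three positions that may contain $A_0$, and bounding the remaining sets by $A$ (i.e.\ bounding the noise contribution by inclusion--exclusion on $1_A=1_{A_0}+1_{A\setminus A_0}$, which has $O(1)$ terms), the noise contributes $O(\eps|A|^2)$.

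\textbf{Non-communicating terms.} Now suppose all indices are $\ge1$ and at least two are distinct, say $i_1\neq i_2$ after relabelling the variables. The Cauchy--Schwarz bound with the mixed pair in the energy slot gives
$$
T\le (\eps'|A_{i_1}|^{3/2}|A_{i_2}|^{3/2})^{1/2}|A_{i_3}|^{1/2}\le \eps'^{1/2}|A|^2 .
$$
There are $O_\eps(1)$ such triples, namely at most $(n+1)^3$ with $n\ll_\eps1$. Their total is therefore $O_\eps(\eps'^{1/2})|A|^2$, which is at most $\eps|A|^2$ once $\eps'$ is chosen small enough in terms of $\eps$. This is allowed, since $n$ and the lower bounds $|A_i|\gg_\eps|A|$ depend only on $\eps$, not on $\eps'$.

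The main point requiring care is the order of quantifiers. The number of components $n$ must be bounded in terms of $\eps$ alone, not $\eps'$, so that $\eps'$ can be taken small after $n$'s bound is known. Since Proposition 3.2 of \cite{green2008maximal} gives $n\ll_\eps1$ from $|A_i|\gg_\eps|A|$, this works. Note also that the two energy slots carry dilations $\lambda_i\cdot A_i$ with $\lambda_i\in\{a_1,a_2,a_3\}$, exactly matching the hypotheses, so each Cauchy--Schwarz application uses the appropriate coefficient pair.
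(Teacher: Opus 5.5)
Your proposal is correct and follows the paper's proof essentially step for step. The triples involving $A_0$ are absorbed into at most three terms like $T_{a_1,a_2,a_3}(A_0,A,A)$, which are bounded by the energy Cauchy--Schwarz estimate and the small-noise hypothesis; the $O_\eps(1)$ off-diagonal triples are bounded by the non-communication hypothesis, with $\eps'$ chosen small in terms of $\eps$ via $n\ll_\eps 1$. Your per-triple bound $\eps'^{1/2}|A|^2$ is in fact the accurate one, whereas the paper writes $\eps'|A|^2$; so the choice should be $\eps'\le \eps^2/n^6$ rather than $\eps'<\eps/n^3$, which changes nothing.
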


\begin{proof}
Decompose $T_{a_1,a_2,a_3}(A)$ into terms of the form $T_{a_1,a_2,a_3}(A_i, A_j, A_k)$. The total contribution from those terms with $i=0$ is
$$
\sum_{j,k} T_{a_1,a_2,a_3}(A_0, A_j, A_k) \leq T_{a_1,a_2,a_3}(A_0, A, A) \leq E(a_1\cdot A_0, a_2 \cdot A)^{1/2}|A|^{1/2} \leq \eps |A|^2,
$$
where we used \cite[Lemma 2.3]{Aa} to bound $T_{a_1,a_2,a_3}(A_0, A, A)$. The same bound also applies to the total contribution from those terms with $j=0$ or $k=0$. Hence
$$
T_{a_1,a_2,a_3}(A) = \sum_{1 \leq i,j,k \leq n} T_{a_1,a_2,a_3}(A_i,A_j,A_k) + O(\eps |A|^2).
$$
If $1 \leq i,j,k \leq n$ and $i,j,k$ are not all equal, say $i \neq j$, then
$$
T_{a_1,a_2,a_3}(A_i,A_j,A_k) \leq E(a_1\cdot A_i, a_2\cdot A_j)^{1/2} |A_k|^{1/2} \leq \eps' |A|^2,
$$
where we again used  \cite[Lemma 2.3]{Aa}. It follows that
$$
T_{a_1,a_2,a_3}(A) = \sum_{i=1}^nT_{a_1,a_2,a_3}(A_i) + O(n^3\eps' |A|^2) + O(\eps |A|^2).
$$
Since $n \ll_{\eps}1$, the desired conclusion follows if we choose $\eps' < \eps/n^3$.
\end{proof}

The following lemma allows us to model each $A_i$ for $1 \leq i \leq n$, which has small doubling, by a dense subset of an interval. 

\begin{lemma}\label{lem:rectification}
Let $B \subset \Z$ be a finite subset with $0 \in B$ and $|B+B| \leq K|B|$ for some $K \geq 2$. Then there exists an interval $I \subset \Z$ with $|I| \ll_K |B|$ and a subset $\wt{B} \subset I$ with $|\wt{B}| = |B|$ such that $T_{a_1,a_2,a_3}(\wt{B}) = T_{a_1,a_2,a_3}(B)$.
\end{lemma}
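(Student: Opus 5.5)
Our plan is to use Freiman's theorem together with a Freiman isomorphism of sufficiently high order, which preserves the relevant solution counts. Since $|B+B|\le K|B|$, Freiman's theorem (in the form of Green--Ruzsa, or just the integer case) places $B$ inside a proper generalized arithmetic progression
$$P=\{x_0+\ell_1m_1+\cdots+\ell_dm_d: 0\le \ell_i<L_i\}$$
with $d\ll_K 1$ and $|P|\ll_K|B|$. Because $0\in B$, we may translate so that the progression contains $0$ as a genuine point. We may also pass to the lattice version: $B\subset x_0+\{\sum \ell_im_i\}$ with the coordinate map $\phi:P\to\Z^d$, $\phi(x_0+\sum\ell_i m_i)=(\ell_1,\dots,\ell_d)$, injective because $P$ is proper.

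The main point is to choose $P$ to be proper not just once but $s$-proper for a suitable $s$ depending on $a_1,a_2,a_3$. Concretely, we want the map $\sum \ell_i m_i \mapsto (\ell_i)$ to be injective on the dilate $sP_0$ with $s=|a_1|+|a_2|+|a_3|$, where $P_0$ is the homogeneous part. Standard results (e.g. Bilu's or Tao--Vu's ``proper progression of rank $d$ can be refined to an $s$-proper one'') let us replace $P$ by an $s$-proper progression $P'\supset B$ with $|P'|\ll_{K,s}|B|$. Then $a_1x_1+a_2x_2+a_3x_3=0$ for $x_i\in B$ holds if and only if $\sum a_i\phi(x_i)=0$ in $\Z^d$, once we handle the base point $x_0$. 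Since $0\in B$, we may take $x_0$ so that $0$ corresponds to a lattice point, and the offset $x_0(a_1+a_2+a_3)$ is likewise represented faithfully by $s$-properness. Thus $\phi$ preserves solutions in both directions.

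Next we embed $\Z^d$ into $\Z$ by $\psi(\ell_1,\dots,\ell_d)=\sum \ell_i M^{i-1}$ with $M=s\max L_i$ large. Then $\psi$ is injective on the box $\prod[-sL_i,sL_i]$, so it preserves linear relations with coefficients bounded by $s$ on $\phi(B)$. We set $\wt B=\psi(\phi(B))$. This lies in an interval of length about $\prod (sL_i)\ll_{K} |P'|\ll_K|B|$, giving $|I|\ll_K|B|$. Note that the length is $M^{d}$-ish: we need $M^{d-1}L_d$, which is $\prod$-comparable only up to a factor $s^d\max L_i^{d}/\prod L_i$. To avoid that loss, we use mixed radix instead: $\psi(\ell)=\sum \ell_i\prod_{j<i}(2sL_j+1)$. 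Then $|I|\le\prod(2sL_j+1)\ll_{K}|P'|$.

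We expect the main obstacle to be obtaining the $s$-proper progression with controlled size. We would first invoke the known fact that any GAP of rank $d$ is contained in an $s$-proper GAP of rank at most $d$ and size $\ll_{d,s}|P|$ (Tao--Vu, Theorem 3.40). Bookkeeping with the base point is handled as described, using $0\in B$ and translation invariance of the equation up to the constant term.
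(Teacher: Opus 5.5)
Your argument is correct. Its overall strategy matches the paper's: obtain a Freiman isomorphism of order $s=|a_1|+|a_2|+|a_3|$ from $B$ onto a subset of an interval of length $\ll_K|B|$, and use $0\in B$ so that it respects the equation $a_1x_1+a_2x_2+a_3x_3=0$.

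The difference lies in how the isomorphism is obtained.
\begin{itemize}
\item The paper quotes a Freiman modelling/rectification theorem (Green--Ruzsa) as a black box. It then translates so that $\phi(0)=0$ and substitutes $x_1'=x_2'=x_3'=0$ into the isomorphism property.
\item You construct the isomorphism yourself. Freiman's theorem gives a proper GAP, and the Tao--Vu refinement makes it $s$-proper. The coordinate map carries $B$ into a box in $\Z^{d}$. The mixed-radix homomorphism with radices $2sL_j+1$ is injective on $\prod_j[-sL_j,sL_j]$, since each digit range is a complete residue system. This gives $|I|\le\prod_j(2sL_j+1)\le(3s)^d|P'|\ll_K|B|$.
\end{itemize}
In effect you reprove the modelling lemma the paper cites. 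The paper's route takes two lines; yours is self-contained modulo Freiman's theorem and makes the dependence on $K$ and the $a_i$ explicit.

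One place to tighten: the remark about ``the offset $x_0(a_1+a_2+a_3)$'' is unnecessary and a bit muddled. Re-index $P'$ so that $0\in B$ has coordinate vector $0$. Then every $x\in P'$ equals $\sum_i\phi(x)_i m_i'$ with $|\phi(x)_i|\le L_i-1$. Hence $\sum_ja_jx_j=\sum_i v_im_i'$, where $v=\sum_ja_j\phi(x_j)$ satisfies $|v_i|\le s(L_i-1)$. $s$-properness says exactly that such a $v$ lies in the kernel of $\ell\mapsto\sum_i\ell_im_i'$ only if $v=0$, so no base-point term ever appears. This recentring plays the same role as the paper's substitution $x_i'=0$.
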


\begin{proof}
Set $t = |a_1| + |a_2| + |a_3|$. By a Freiman modeling lemma (or rectification, such as Theorem 1.4 in \cite{GreenRuzsa}), there exists a Freiman $t$-isomorphism $\phi: B \rightarrow \wt{B}$ for some subset $\wt{B}$ of an interval $I$ with $|I| \ll_K |B|$. Thus
$$
\sum_{i=1}^3a_ix_i= \sum_{i=1}^3a_ix_i' \Longleftrightarrow \sum_{i=1}^3a_i\phi(x_i) = \sum_{i=1}^3a_i\phi(x_i')
$$
for $x_1,x_2,x_3,x_1',x_2',x_3' \in B$. By translating we may assume that $\phi(0) = 0$. Setting $x_1'=x_2'=x_3' = 0$ we have
$$
a_1x_1 + a_2x_2 + a_3x_3 = 0 \Longleftrightarrow a_1\phi(x_1) + a_2\phi(x_2) + a_3\phi(x_3) = 0
$$
for $x_1,x_2,x_3 \in B$. This implies that $T_{a_1,a_2,a_3}(\wt{B}) = T_{a_1,a_2,a_3}(B)$ as desired.    
\end{proof}

For each $1 \leq i \leq n$, by Lemma \ref{lem:rectification} applied to $A_i \cup \{0\}$, we obtain an interval $I_i \subset \Z$ with $|I_i| \ll_{\eps,\eps'} |A_i|$ and a subset $\wt{A}_i \subset I_i$ with $|\wt{A}_i| \leq |A_i|+1$ such that $T_{a_1,a_2,a_3}(A_i) \leq T_{a_1,a_2,a_3}(\wt{A}_i)$. Let $\eps'' > 0$ be a constant sufficiently small in terms of $\eps, \eps'$. By Proposition \ref{prop:main-dense-case} applied to $A_i'$, there exists a positive integer $q = q(\eps'')$ (which we can take to be independent of $i$), all of whose prime factors divide $a_1a_2a_3$, such that
$$
T_{a_1,a_2,a_3}(A_i) \leq \gamma_{a_1,a_2,a_3}(q)|A_i|^2 + \eps'' |I_i|^2 \leq (\gamma_{a_1,a_2,a_3}(q) + \eps)|A_i|^2,
$$
where the second inequality can be guaranteed once $\eps''$ is small enough in terms of $\eps, \eps'$. By Lemma \ref{lem:TA-TAi}, it follows that
$$
T_{a_1,a_2,a_3}(A) \leq \sum_{i=1}^n (\gamma_{a_1,a_2,a_3}(q) + \eps)|A_i|^2  + O(\eps |A|^2) \leq (\gamma_{a_1,a_2,a_3}(q) + O(\eps))|A|^2.
$$
This concludes the proof.

\section{Passing from $\Z$ to $X$ via the regularity lemma}\label{sec:proof2}

In this section we prove Proposition \ref{prop:proof2}. Throughout this section, we fix nonzero integers $a_1,a_2,a_3$ with $\gcd(a_1,a_2,a_3)=1$. We abbreviate $T,\wt{T}$ for $T_{a_1,a_2,a_3},\wt{T}_{a_1,a_2,a_3}$, respectively. We will apply the abelian case of the arithmetic regularity lemma in the following form. 

\begin{theorem}[Arithmetic regularity lemma]
Let $\eps \in (0,1/2)$ and let $\CF:\R_{>0}\rightarrow\R_{>0}$ be a growth function. Let $I \subset \Z$ be an interval of length $N$, and let $f: I\rightarrow [0,1]$ be a function. Then there exists $M \ll_{\eps,\CF}1$ such that one can decompose $f$ into
$$
f = f_{\str} + f_{\sml} + f_{\unf},
$$
where $f_{\str}, f_{\sml}, f_{\unf}: I \rightarrow [-1,1]$ satisfy the following properties:
\begin{enumerate}
\item $f_{\str}(n) = F(\pi(n))$, where $\pi: \Z\rightarrow X:= \Z/q\Z \times \R \times (\R/\Z)^d$ for some $q,d \leq M$ is defined by
$$
\pi(n) = \Big(n\Mod{q}, \frac{n}{N}, n\theta\Big)
$$
for some $\theta \in (\R/\Z)^d$ which is $(\CF(M), N)$-irrational, and $F: X\rightarrow [0,1]$ has Lipschitz norm at most $M$ and is supported on $\Z/q\Z \times (N^{-1}\cdot I) \times (\R/\Z)^d$.
\item $\|f_{\sml}\|_{\ell^2(I)} := (\E_{n \in I}|f_{\sml}(n)|^2)^{1/2}  \leq \eps$.
\item $\|f_{\unf}\|_{U^2(I)} \leq 1/\CF(M)$.
\end{enumerate}
\end{theorem}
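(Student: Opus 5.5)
This is the abelian case of the Green--Tao arithmetic regularity lemma, so the plan is to take it from \cite{GreenTao-regularity} (see also \cite{Eberhard-note}) and translate its output into the present normalisation. I would not run an energy-increment argument from scratch.

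The first step is to embed $I$ into a cyclic group. I view $f$ as a function on $\Z/N'\Z$ with $N'$ prime and $N' \asymp N$, say $N'\in[2N,4N]$, extended by zero. This makes the regularity lemma for $\Z/N'\Z$ (equivalently, the $s=1$ case for $[N]$ in \cite{GreenTao-regularity}) available. With growth function $\CF$, it produces a factor of complexity at most $M \ll_{\eps,\CF} 1$ and a decomposition of $f$ into three parts:
\begin{itemize}
\item a structured part, which is a Lipschitz function of a linear nilsequence, that is, of a polynomial sequence $n \mapsto (n\alpha_1,\dots,n\alpha_D) \in (\R/\Z)^D$;
\item a small part, with $\ell^2$ norm at most $\eps$;
\item a uniform part, with $U^2$ norm at most $1/\CF(M)$.
\end{itemize}
Here the Fourier-analytic version suffices. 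I would take the Bohr set generated by the large spectrum, together with an energy-increment iteration, to obtain $f_{\str} = f * \beta$ for a smooth Bohr-set mollifier $\beta$.

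The second step is to factor the frequency vector $\alpha$ into a rational part, an archimedean part and an irrational part. For this I would use the factorisation theorem from \cite{GreenTao-regularity} (the degree-one case, which is a simultaneous Dirichlet/Kronecker argument). It writes each $\alpha_j$ as a combination of three kinds of frequencies:
\begin{itemize}
\item rational frequencies with denominator $q \le M'$;
\item ``smooth'' frequencies of size $O(M'/N)$;
\item a $(\CF(M'),N)$-irrational vector $\theta \in (\R/\Z)^{d}$ with $d\le M'$.
\end{itemize}
The rational part gives the coordinate $n \bmod q$. The smooth part makes $n\alpha$ a Lipschitz function of $n/N$ with constant $O(M')$. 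Composing gives $f_{\str}(n) = F(n\bmod q, n/N, n\theta)$, where $F$ is Lipschitz with norm depending only on $M'$. I would redefine $M$ to dominate all these constants, and iterate the factorisation whenever the irrationality fails. That iteration terminates because the dimension drops each time, which is why the growth function $\CF$ must be fixed in advance.

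The third step is range and support bookkeeping. Values in $[0,1]$ follow from positivity of the mollifier. Restricting $F$ to the support $\Z/q\Z\times(N^{-1}I)\times(\R/\Z)^d$ is done by multiplying by a Lipschitz cutoff in the $\R$ coordinate of width $\eps$ around $N^{-1}I$. The error this introduces is supported on $O(\eps N)$ points, so it is absorbed into $f_{\sml}$ at a cost of $O(\eps^{1/2})$ in $\ell^2$; to compensate I would rescale $\eps$ at the start. All three pieces then take values in $[-1,1]$.

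The main obstacle is the second step: obtaining irrationality of $\theta$ relative to the growth function evaluated at the final complexity, while keeping $F$ Lipschitz in the new coordinates. Rather than redo this, I would quote it directly from \cite[Theorem 1.2 and Section 2]{GreenTao-regularity} specialised to $s=1$.
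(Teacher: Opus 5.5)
Your proposal is correct and is essentially the paper's argument. The paper likewise just quotes the abelian ($s=1$) Green--Tao regularity lemma, in the already-unpacked form \cite[Lemma A.2]{EberhardGreenManners}, which for $I=[N]$ is precisely the stated theorem; it handles a general interval $I$ by applying that lemma to a translate of $f$ and translating $F$ accordingly, so no cyclic embedding or separate factorisation step is needed.

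If you do keep the embedding into $\Z/N'\Z$, two points need care. Passing from $\|f_{\unf}\|_{U^2(\Z/N'\Z)}$ to $\|f_{\unf}\|_{U^2(I)}$ needs an extra smoothed-cutoff argument, which loses a power of the norm. Also, the growth function fed into the regularity step must be composed with the complexity bound produced by the factorisation. Both are absorbed by choosing a faster-growing function at the start.
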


\begin{remark}
When $I = [N]$, this is precisely \cite[Lemma A.2]{EberhardGreenManners}. For general $I$, it follows by applying \cite[Lemma A.2]{EberhardGreenManners} to a suitable translate of $f$. The definition of $(\CF(M),N)$-irrationality of $\theta$ can be found in \cite[Definition A.1]{EberhardGreenManners}. The definition of the Gowers $U^2(I)$ norm can be found in \cite[Definition A.7]{EberhardGreenManners} when $I = [N]$. For general $I$, there is an obvious definition of $\|f\|_{U^2(I)}$ in terms of the $U^2([N])$ norm of a shifted version of $f$. For a fuller discussion of Gowers norms, see \cite[Section 11.1]{TaoVu-book}.
\end{remark}

Let $\CF:\R_{>0}\rightarrow\R_{>0}$ be an increasing function growing sufficiently rapidly in terms of $\eps$. By the arithmetic regularity lemma applied to $1_A$, there exists $M \ll_{\eps,\CF}1$ and $q,d \leq M$ such that
$$
1_A = f_{\str} + f_{\sml} + f_{\unf}
$$ 
for $n \in I$, where $f_{\str} = F\circ \pi$ and $\pi: \Z\rightarrow X$ is defined by
$$
\pi(n) = \Big(n\Mod{q}, \frac{n}{N}, n\theta\Big)
$$
for some $\theta \in (\R/\Z)^d$ which is $(\CF(M), N)$-irrational, and $F: X\rightarrow [0,1]$ has Lipschitz norm at most $M$ and is supported on $\Z/q\Z \times (N^{-1}\cdot I)\times (\R/\Z)^d$. Moreover, $f_{\sml}$ and $f_{\unf}$ take values in $[-1,1]$ and satisfy 
$$
\|f_{\sml}\|_{\ell^2(I)} \leq \eps, \ \ \|f_{\unf}\|_{U^2(I)} \leq 1/\CF(M).
$$
By choosing $\CF$ to grow rapidly enough in terms of $\eps$, we may ensure that $\CF(M)$ is sufficiently large in terms of $\eps$ and $M$. We may also assume that $N$ is sufficiently large in terms of $\eps$ and $M$, since otherwise the conclusion of Proposition \ref{prop:proof2} is trivial. Since $\theta$ is $(\CF(M),N)$-irrational, it follows from \cite[Lemma A.4]{EberhardGreenManners} that
$$
\int F = \E_{n \in I} f_{\str}(n) + O(\eps).
$$
Since $\E_{n \in I}|f_{\sml}(n)| \leq \|f_{\sml}\|_{\ell^2(I)} \leq \eps$ and $\E_{n \in I} |f_{\unf}(n)| \ll \|f_{\unf}\|_{U^2(I)} \leq \eps$ (by \cite[Lemma A.8]{EberhardGreenManners}), we have
$$
\int F = \E_{n \in I} 1_A(n) + O(\eps) = \frac{|A|}{N} + O(\eps).
$$
We decompose $T(A)$ into a sum of nine terms, each of the form $T(f_1, f_2, f_3)$ where $f_i$ is either $f_{\str} = F\circ\pi$, $f_{\sml}$, or $f_{\unf}$.

\begin{lemma}\label{lem:l2-u2-control}
Let the notations and assumptions be as above. We have
$$
|T(f_1,f_2,f_3)| \leq \|f_i\|_{\ell^2(I)}\cdot N^2 \text{ and }|T(f_1,f_2,f_3)| \ll \|f_i\|_{U^2(I)}\cdot N^2
$$
for each $i\in\{1,2,3\}$.
\end{lemma}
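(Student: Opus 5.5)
The plan is to write $T(f_1,f_2,f_3)=\sum_{a_1n_1+a_2n_2+a_3n_3=0} f_1(n_1)f_2(n_2)f_3(n_3)$, with $n_j\in I$ and all $f_j$ bounded by $1$ in absolute value. Both bounds come from fixing two variables and using that the third is then determined.

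For the $\ell^2$ bound, suppose for instance $i=1$. For fixed $n_1$, the number of pairs $(n_2,n_3)\in I^2$ with $a_2n_2+a_3n_3=-a_1n_1$ is at most $N$, since $n_2$ determines $n_3$. Hence
$$|T|\le N\sum_{n_1\in I}|f_1(n_1)| = N^2\,\E_{n\in I}|f_1(n)| \le N^2\|f_1\|_{\ell^2(I)}$$
by Cauchy--Schwarz. The cases $i=2,3$ are symmetric.

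For the $U^2$ bound, I will use Fourier analysis on $\Z$. Write $T=\int_0^1 \hat f_1(a_1\alpha)\hat f_2(a_2\alpha)\hat f_3(a_3\alpha)\,d\alpha$, where $\hat f(\alpha)=\sum_n f(n)e(n\alpha)$. Taking $i=1$ (the others are symmetric), we get
$$|T|\le \sup_\alpha|\hat f_1(\alpha)|\int_0^1|\hat f_2(a_2\alpha)||\hat f_3(a_3\alpha)|\,d\alpha.$$
By Cauchy--Schwarz and Parseval, after the change of variables $\alpha\mapsto a_j\alpha$ (which preserves the $L^2$ norm on $\R/\Z$), this integral is at most $\|f_2\|_{\ell^2}\|f_3\|_{\ell^2}$ in the unnormalised sense, so at most $N$.

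It remains to bound $\sup_\alpha|\hat f_1(\alpha)|\ll N\|f_1\|_{U^2(I)}$. Recall that $\|f\|_{U^2(I)}$ is normalised, as in \cite{EberhardGreenManners}, by embedding $I$ into $\Z/N'\Z$ with $N'\asymp N$ (say $N'\ge 2N$ prime) and computing the $U^2$ norm there. Then $\|f\|_{U^2}^4=\sum_\xi|\E f(x)e(-x\xi/N')|^4\ge \sup_\xi|\E\cdot|^4$, which controls the Fourier coefficients at the rational frequencies $\xi/N'$.

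The main obstacle is passing from these frequencies $\xi/N'$ to arbitrary real $\alpha$. I would handle it by avoiding the real sup altogether: carry out the whole computation in $\Z/N'\Z$, with $N'$ a prime larger than $(|a_1|+|a_2|+|a_3|)N$. Then solutions mod $N'$ with $n_j\in I$ coincide with integer solutions, and multiplication by each $a_j$ is a bijection of $\Z/N'\Z$. Applying the finite Fourier identity, Parseval, and the standard inequality $|T|\le \|\hat f_1\|_\infty\|\hat f_2\|_2\|\hat f_3\|_2$ then gives $|T|\ll N^2\|f_1\|_{U^2}$. The constant depends on the $a_j$ through $N'/N$, which is acceptable since the $a_j$ are fixed. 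The only delicate point is consistency with the paper's normalisation of $U^2(I)$; any two choices of the ambient cyclic group of size $\asymp N$ give comparable norms, up to constants depending on $a_j$, which suffices here.
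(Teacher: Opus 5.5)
Your proposal is correct. The $\ell^2$ bound is the paper's argument: fixing $n_i$ and one other variable determines the third, so $|T|\le N\sum_{n\in I}|f_i(n)|$, and Cauchy--Schwarz finishes.

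For the $U^2$ bound the routes differ. The paper only says to embed $I$ into an appropriate cyclic group and then cites the generalized von Neumann theorem \cite[Lemma 11.4]{TaoVu-book}. You instead prove the needed instance directly by Fourier analysis in $\Z/N'\Z$, with $N'$ a prime exceeding $(|a_1|+|a_2|+|a_3|)N$. This gives $|T|\le N'^{-1}\|\hat f_1\|_\infty\|\hat f_2\|_2\|\hat f_3\|_2\le N\|\hat f_1\|_\infty\le NN'\|f_1\|_{U^2(\Z/N'\Z)}$, after which you compare normalisations.

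Your version is self-contained. It makes explicit the choice of modulus, the invertibility of each $a_j$ modulo $N'$, and the dependence of the implied constant on $a_1,a_2,a_3$, all of which the paper leaves implicit. You were right to abandon real frequencies: passing from $\xi/N'$ to arbitrary $\alpha$ would cost a logarithm or a worse exponent. The paper's citation buys brevity at the price of these details.

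Two small points to tidy up.

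\begin{itemize}
\item Your claim that solutions mod $N'$ with $n_j\in I$ coincide with integer solutions needs a case split. When $a_1+a_2+a_3\neq 0$ the equation is not translation-invariant, and $I$ may lie far from the origin. Consider the interval of values $\{a_1n_1+a_2n_2+a_3n_3: n_j\in I\}$. If it contains $0$, its length is less than $N'$, so it contains no other multiple of $N'$ and the two counts agree. If it does not contain $0$, then $T=0$ and there is nothing to prove.
\item Comparing your unnormalised $\|f_1\|_{U^2(\Z/N'\Z)}$ with the paper's $\|f_1\|_{U^2(I)}$ uses $N'\ge 2N$. This makes reduction mod $N'$ a Freiman $2$-isomorphism on $I$. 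The two norms then differ by a factor $\asymp (N/N')^{3/4}$, which is harmless since $N'\asymp_{a_1,a_2,a_3}N$. Your choice of $N'$ already guarantees this.
\end{itemize}
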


\begin{proof}
Without loss of generality we may assume that $i=1$. For the bound in terms of the $\ell^2(I)$ norm, we use the triangle inequality, the fact that $f_2,f_3$ are bounded pointwise by $1$, and the Cauchy-Schwarz inequality to obtain
$$
|T(f_1,f_2,f_3)| \leq \sum_{x_1,x_2 \in I} |f_1(x_1)| \leq  N^2 \Big(\E_{x_1 \in I}|f_1(x_1)|^2\Big)^{1/2} = \|f_1\|_{\ell^2(I)} \cdot N^2.
$$
The bound in terms of the $U^2(I)$ norm follows from an instance of the generalized von-Neumann theorem (see \cite[Lemma 11.4]{TaoVu-book}), after embedding $I$ into an appropriate cyclic group.    
\end{proof}

It follows from Lemma \ref{lem:l2-u2-control} that $T(A) = T(f_{\str}) + O(\eps N^2)$. Thus the proof of Proposition \ref{prop:proof2} is completed once we establish the following lemma connecting $T(f_{\str})$ with $\wt{T}(F)$.

\begin{lemma}\label{lem:T-Ttilde}
Let $\theta \in (\R/\Z)^d$ be $(A,N)$-irrational, and let $F_1,F_2,F_3: X \rightarrow [0,1]$ (where $X = \Z/q\Z \times \R \times (\R/\Z)^d$)  be functions with Lipschitz norm at most $M$ supported on $\Z/q\Z \times I \times (\R/\Z)^d$ for some interval $I \subset \R$ of length $1$. Let $f_i : \Z\rightarrow [0,1]$ be defined by $f_i(n) = F_i(\pi(n)) = F_i(n\Mod{q}, n/N, n\theta)$. Then, provided that $A,N$ are sufficiently large in terms of $q,d,M,\eps$, we have
\begin{equation}\label{eq:T-Ttilde}
T(f_1,f_2,f_3) = (\wt{T}(F_1,F_2,F_3) + O(\eps))N^2.
\end{equation}
\end{lemma}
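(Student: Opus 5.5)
We would prove \eqref{eq:T-Ttilde} by writing $T(f_1,f_2,f_3)$ as an average over a two-dimensional parametrisation of the solutions and applying equidistribution of a polynomial-type orbit in $X\times X$. Concretely, the solutions $(n_1,n_2,n_3)\in\Z^3$ of $a_1n_1+a_2n_2+a_3n_3=0$ form a rank-two lattice $\Lambda$. Since $\gcd(a_1,a_2,a_3)=1$, we choose a basis $v,w\in\Z^3$ of $\Lambda$ and write every solution uniquely as $(n_1,n_2,n_3)=sv+tw$ with $(s,t)\in\Z^2$. Then
$$T(f_1,f_2,f_3)=\sum_{(s,t)\in\Z^2}\prod_{i=1}^3 F_i\big(\pi(sv_i+tw_i)\big),$$
where the summand vanishes unless $(s,t)$ lies in a convex region of area $\asymp N^2$. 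The map $(s,t)\mapsto(\pi(sv_i+tw_i))_{i}$ is affine-linear in each of the coordinates $\Z/q\Z$, $\R$ and $(\R/\Z)^d$.

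We then handle the three coordinates separately. We first split $(s,t)$ according to residues modulo $q$, which fixes $(r_1,r_2,r_3)$ with $\sum a_ir_i\equiv 0$; each admissible triple arises from exactly $q^2/q^2\cdot(\text{index})$ classes, matching the normalisation $q^{-2}$ in Definition \ref{def:Ttilde}(3). On each class we make the substitution $s=qs'+s_0$, $t=qt'+t_0$. The real coordinate $(sv_i+tw_i)/N$ varies slowly, so we partition the $(s',t')$ region into boxes of side $\eta N$ (with $\eta$ small in terms of $\eps,M$). Because $F_i$ is $M$-Lipschitz, freezing the real coordinate on each box costs $O(M\eta)$. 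Inside a box it remains to average the torus part $\prod_i F_i(r_i,x_i,(s v_i+t w_i)\theta)$ over $(s',t')$. The pair $(s\theta,t\theta)$, restricted to a progression, equidistributes in the subtorus $\{(y_1,y_2,y_3)\in((\R/\Z)^d)^3:\ a_1y_1+a_2y_2+a_3y_3=0\}$ parametrised by $(y,z)\mapsto(yv_i+zw_i)$. This is exactly the measure used in $\E_{y_3:a_1y_1+a_2y_2+a_3y_3=0}$, with $|a_3|^d$ preimages.

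The equidistribution step uses the $(A,N)$-irrationality of $\theta$ via \cite[Lemma A.4]{EberhardGreenManners}, or its two-parameter analogue proved by the same Fourier argument. We would expand the Lipschitz functions in Fourier series truncated at frequency $K$ depending on $M,\eps$, and bound the exponential sums $\sum_{s',t'}e((k\cdot q\theta)s'+(l\cdot q\theta)t')$ over a box. Here the frequencies are $(k,l)$ ranging over the characters nontrivial on the subtorus. Irrationality ensures $\|q(k\cdot\theta)\|$ is at least $A/N$ whenever the relevant integer combination of frequencies is nonzero and bounded, so these sums are $o(\eta^2N^2)$.

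Summing over boxes then gives the Riemann sum of $\int F_1F_2\,\E F_3$. The Jacobian of $(s,t)\mapsto(n_1,n_2)$ on the lattice, divided out by the real measure, produces the factor $1/|a_3|$, and this should agree with the $\vol(\Gamma)=q^2|a_1a_2a_3|$ computation in Section \ref{sec:local}. The main obstacle is this bookkeeping. We must check that the residue-class sum, the $1/|a_3|$ normalisation and the subtorus Haar measure combine exactly to Definition \ref{def:Ttilde}. We must also verify that the one-dimensional irrationality hypothesis suffices for the two-parameter sums: the characters that are nontrivial on the subtorus have to correspond to nonzero bounded frequencies $m\cdot\theta$ with $m\in\Z^d$. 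We would attempt this by noting that a character $(k_1,k_2,k_3)$ restricted to the subtorus gives $s\mapsto(\sum k_iv_i)\cdot\theta$, which vanishes identically only when it is trivial on $\Lambda\otimes(\R/\Z)^d$.
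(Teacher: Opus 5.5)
Your argument is correct, and its analytic core is the same as the paper's. You freeze the real coordinate at a scale where the Lipschitz error is $O(\eps)$, separate residues mod $q$, expand the torus parts in truncated Fourier series, and use $(A,N)$-irrationality to discard every character except $(k_1,k_2,k_3)=(a_1m,a_2m,a_3m)$.

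What differs is the decomposition. The paper cuts the range of each variable separately into progressions $P_{r,j}=\pi^{-1}(\{r\}\times I_j\times(\R/\Z)^d)$. On each cell $\wt{T}$ then factorises as a torus factor times $\wt{T}(X_{r_1,j_1},X_{r_2,j_2},X_{r_3,j_3})$. The normalisation comes from the lattice-point count $T(P_{r_1,j_1},P_{r_2,j_2},P_{r_3,j_3})\sim N^2q^{-2}\wt{T}(I_{j_1},I_{j_2},I_{j_3})$, as in Section \ref{sec:local}, and the equidistribution is isolated in Lemma \ref{lem:equi-dist}. You instead parametrise the solution lattice $\Lambda$ by a basis $v,w$ and cut the parameter plane into boxes.

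Your route makes the exponential sums honest products of one-dimensional geometric sums with integer frequency vectors $q\sum_iv_ik_i$ and $q\sum_iw_ik_i$. By contrast, the paper's sum runs over $(n_1,n_2)$ with $a_3\mid a_1n_1+a_2n_2$, uses frequencies $m_i-\frac{a_i}{a_3}m_3$, and carries the non-box constraint $n_3\in P_3$. Making that step rigorous tacitly requires such a parametrisation anyway.

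The price is the bookkeeping you flagged, which closes up cleanly because $\Lambda=a^{\perp}\cap\Z^3$ is saturated and $a$ is primitive:
\begin{enumerate}
\item Reduction mod $q$ is a bijection from classes of $(s,t)$ mod $q$ onto admissible triples $(r_1,r_2,r_3)$. Both sets have $q^2$ elements, and surjectivity is the lifting argument of Section \ref{sec:local}. So each triple arises from exactly one class, not the garbled count you wrote.
\item $v\times w=\pm a$, so $|v_1w_2-v_2w_1|=|a_3|$. This is your factor $1/|a_3|$.
\item $(k_1,k_2,k_3)$ is trivial on $\Lambda\otimes(\R/\Z)^d$ if and only if $k_i=a_im$ for some $m\in\Z^d$. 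In that case the phase from the shift $(s_0,t_0)$ is $m\cdot\theta\,(s_0\,a\cdot v+t_0\,a\cdot w)=0$.
\end{enumerate}
Finally, $\eta$ must be small in terms of $q$ and the size of $v,w$, not just $\eps$ and $M$. A box of side $\eta N$ in $(s',t')$ moves the real coordinates by $O(q\eta(|v|+|w|))$.
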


\begin{proof}
In the case when $|a_1|=|a_2|=|a_3|=1$, this is essentially the conclusion in \cite[Lemma A.5]{EberhardGreenManners} when $F_1=F_2=F_3$. For general $a_1,a_2,a_3$, we proceed as follows.  Let $J = \lceil M/\eps\rceil$ and partition $I$ into $J$ subintervals $I_1,\cdots,I_J$ of length $1/J$. For $r\in\Z/q\Z$ and $1 \leq j \leq J$, let $X_{r,j} = \{r\} \times I_j \times (\R/\Z)^d$ and $P_{r,j} = \pi^{-1}(X_{r,j})$. Then $P_{r,j}$ is an arithmetic progression of step $q$ with $|P_{r,j}|\sim N/(qJ)$. We may decompose
$$
T(f_1,f_2,f_3) = \sum_{r_1,r_2,r_3 \in \Z/q\Z} \sum_{1 \leq j_1,j_2,j_3 \leq J} T(f_11_{P_{r_1,j_1}}, f_21_{P_{r_2,j_2}}, f_31_{P_{r_3,j_3}}).
$$
Note that if the summand is nonzero then there exists $n_i \in P_{r_i,j_i}$ for $i \in \{1,2,3\}$ satisfying $a_1n_1+a_2n_2+a_3n_3=0$. This implies that $a_1r_1+a_2r_2+a_3r_3\equiv 0\Mod{q}$ and $a_1I_{j_1}+a_2I_{j_2}+a_3I_{j_3} \subset [-O(1/J), O(1/J)]$. It follows that there are $O((qJ)^2)$ nonzero terms in this decomposition. Similarly, we can decompose
$$
\wt{T}(F_1,F_2,F_3) =  \sum_{r_1,r_2,r_3 \in \Z/q\Z} \sum_{1 \leq j_1,j_2,j_3 \leq J} \wt{T}(F_11_{X_{r_1,j_1}}, F_21_{X_{r_2,j_2}}, F_31_{X_{r_3,j_3}}),
$$
and there are $O((qJ)^2)$ nonzero terms in this decomposition. To prove \eqref{eq:T-Ttilde}, it suffices to prove for every $r_1,r_2,r_3 \in \Z/q\Z$ and $1 \leq j_1,j_2,j_3 \leq J$ that
\begin{equation}\label{eq:T-Ttilde2}
T(f_11_{P_{r_1,j_1}}, f_21_{P_{r_2,j_2}}, f_31_{P_{r_3,j_3}}) = \wt{T}(F_11_{X_{r_1,j_1}}, F_21_{X_{r_2,j_2}}, F_31_{X_{r_3,j_3}}) N^2 + O(\eps |P_{r_1,j_1}|^2).
\end{equation}

For each $r\in\Z/q\Z$ and $1 \leq j \leq J$, pick an arbitrary element $s_j \in I_j$ and define for $i \in \{1,2,3\}$ the function $F_i^{(r,j)}: (\R/\Z)^d\rightarrow \C$ by $F_i^{(r,j)}(t) = F_i(r, s_j, t)$ for $t \in (\R/\Z)^d$. Since $F_i$ has Lipschitz norm at most $M$, $F_i^{(r,j)}$ has Lipschitz norm at most $M$ for each $r,j$. Moreover, for all $x = (r,s,t)\in X_{r,j}$ we have
$$
|F_i(x) - F_i^{(r,j)}(t)| = |F(r,s,t) - F(r,s_j,t)| \leq M\cdot |s-s_j| \leq \frac{M}{J} \ll \eps
$$
by our choice of $J$. For $n \in P_{r,j}$, we have $\pi(n) \in X_{r,j}$ and hence
$$
|f_i(n) - F_i^{(r,j)}(n\theta)|  = |F_i(\pi(n)) - F_i^{(r,j)}(n\theta)| \ll \eps.
$$
It follows that we can replace $f_i1_{P_{r_i,j_i}}$ and $F_i1_{X_{r_i,j_i}}$ in \eqref{eq:T-Ttilde2} by $F_i^{(r_i,j_i)}(\cdot \theta)1_{P_{r_i,j_i}}$ and $F_i^{(r_i,j_i)}1_{X_{r_i,j_i}}$, respectively, at the cost of an acceptable error. This reduces matters to proving
\begin{multline}\label{eq:T-Ttilde3}
T(F_1^{(r_1,j_1)}(\cdot\theta)1_{P_{r_1,j_1}}, F_2^{(r_2,j_2)}(\cdot\theta)1_{P_{r_2,j_2}}, F_3^{(r_3,j_3)}(\cdot\theta)1_{P_{r_3,j_3}}) \\
 =\wt{T}(F_1^{(r_1,j_1)}1_{X_{r_1,j_1}}, F_2^{(r_2,j_2)}1_{X_{r_2,j_2}}, F_3^{(r_3,j_3)}1_{X_{r_3,j_3}}) N^2 + O(\eps |P_{r_1,j_1}|^2).
\end{multline}
Since $F_i^{(r_i,j_i)}(x)$ depends only on the third component of $x$ and $1_{X_{r_i,j_i}}(x)$ depends only on the first and second component of $x$, the $\wt{T}$ term on the right-hand side above can be factored as
$$
\wt{T}(F_1^{(r_1,j_1)}, F_2^{(r_2,j_2)}, F_3^{(r_3,j_3)}) \cdot \wt{T}(X_{r_1,j_1}, X_{r_2,j_2}, X_{r_3,j_3}).
$$
By the definition of $\wt{T}$, we have for $a_1r_1 + a_2r_2 +a_3r_3 \equiv 0\Mod{q}$ that
$$
\wt{T}(X_{r_1,j_1}, X_{r_2,j_2}, X_{r_3,j_3}) = \frac{1}{q^2} \wt{T}(I_{j_1}, I_{j_2}, I_{j_3}).
$$
On the other hand, since $P_{r_i,j_i}$ is the congruence class $r_i\Mod{q}$ in the interval $N\cdot I_{j_i}$ which has length $\sim N/J$, we have
$$
T(P_{r_1,j_1}, P_{r_2,j_2}, P_{r_3,j_3}) \sim \frac{1}{q^2} \wt{T}(N\cdot I_{j_1}, N \cdot I_{j_2}, N\cdot I_{j_3})= \frac{N^2}{q^2} \wt{T}(I_{j_1}, I_{j_2}, I_{j_3}) .
$$
It follows that
$$
T(P_{r_1,j_1},P_{r_2,j_2},P_{r_3,j_3}) \sim \wt{T}(X_{r_1,j_1}, X_{r_2,j_2}, X_{r_3,j_3})N^2.
$$
In view of the above, the desired estimate \eqref{eq:T-Ttilde3} follows once we apply the following lemma to the left-hand side, completing the proof of Proposition \ref{prop:proof2}.
\end{proof}

\begin{lemma}\label{lem:equi-dist}
Let $\theta \in (\R/\Z)^d$ be $(A,N)$-irrational, let $F_1,F_2,F_3: (\R/\Z)^d \rightarrow \C$ be functions with Lipschitz norm at most $M$, and let $P_1,P_2,P_3 \subset \Z$ be arithmetic progressions of length between $\eta N$ and $2\eta N$. Then, provided that $A,N$ are sufficiently large in terms of $d, M, \eta,\eps$, we have
$$
 \sum_{\substack{n_1 \in P_1,n_2\in P_2,n_3\in P_3 \\ a_1n_1+a_2n_2+a_3n_3=0}} F_1(n_1\theta) F_2(n_2\theta) F_3(n_3\theta)
 = \wt{T}(F_1, F_2, F_3)\cdot T(P_1,P_2,P_3) + O(\eps |P_1|^2).
 $$
\end{lemma}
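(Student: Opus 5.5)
Our plan is to Fourier-expand the functions $F_i$ on the torus and use the irrationality of $\theta$ to show that only the "resonant" frequency triples survive, then identify these with $\wt{T}(F_1,F_2,F_3)$.

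\textbf{Step 1: truncate.} Since each $F_i$ has Lipschitz norm at most $M$, Fejér (or de la Vallée Poussin) smoothing gives trigonometric polynomials $\Phi_i$ with frequencies $|k|\le K=K(d,M,\eps)$, coefficients bounded by $O_{d,M,\eps}(1)$ in $\ell^1$, and $\|F_i-\Phi_i\|_\infty\le\eps$. Replacing $F_i$ by $\Phi_i$ on both sides costs $O(\eps T(P_1,P_2,P_3)) = O(\eps|P_1|^2)$ (the number of solutions is $\ll |P_1||P_2|$, and the $|P_i|$ are comparable). On the right-hand side we use the trivial bound $|\wt{T}(G_1,G_2,G_3)|\le \prod\|G_i\|_\infty$.

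\textbf{Step 2: expand.} Write $\Phi_i(t)=\sum_{k}c_i(k)e(k\cdot t)$. The left-hand side becomes
$$\sum_{k_1,k_2,k_3}c_1(k_1)c_2(k_2)c_3(k_3)\sum_{\substack{n_i\in P_i\\ a_1n_1+a_2n_2+a_3n_3=0}} e\big((k_1n_1+k_2n_2+k_3n_3)\cdot\theta\big).$$
The solutions $(n_1,n_2,n_3)$ form a 2-dimensional lattice intersected with a convex region, so we parametrize them as $n_i=u_i+v_i s+w_i t$ with $(s,t)$ ranging over a convex polygon of area $\asymp T(P_1,P_2,P_3)$ (the coefficients depend on the steps of the $P_i$ and on the $a_i$). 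The phase is then linear in $(s,t)$, with frequencies $\alpha=\sum_i v_i k_i\cdot\theta$ and $\beta=\sum_i w_ik_i\cdot\theta$. Each inner sum is either $\asymp T(P_1,P_2,P_3)$ times a unimodular constant, when $\alpha,\beta\equiv 0$, or it is $o(|P_1|^2)$, by summing geometric series over the rows of the polygon, whenever $\|\alpha\|$ or $\|\beta\|$ is $\gg 1/(\eta N)$ times a large constant. \emph{This dichotomy is the main obstacle.} We have to use $(A,N)$-irrationality in the form of \cite[Definition A.1]{EberhardGreenManners}, namely $\|m\cdot\theta\|_{\R/\Z}\ge A/N$ for $0<|m|\le A$. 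The combined frequency $\sum_i v_ik_i$ is an integer vector of size $O_{K,a,\text{steps}}(1)$. The step sizes are bounded, since $|P_i|\ge \eta N$ and the progressions live in a bounded dilate of $[N]$ (in the application the steps equal $q$). So as long as the combined frequency is nonzero, it has norm $\le A$ for $A$ large, and we get the required lower bound on $\|\alpha\|$.

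\textbf{Step 3: identify the main term.} The surviving triples are exactly those with $\sum_i v_ik_i=\sum_iw_ik_i=0$, i.e.\ $(k_1,k_2,k_3)$ orthogonal to the solution lattice directions. This is equivalent to $(k_1,k_2,k_3)=\ell(a_1,a_2,a_3)$ for some $\ell\in\Q^d$ with $\ell a_i\in\Z^d$. For these, the phase is constant along solutions, but it may be a nontrivial root of unity coming from the offset $u$. We would handle this via the residues of the $P_i$: since $\ell a_i\in\Z^d$ and $\gcd(a_1,a_2,a_3)=1$, we get $\ell\in\Z^d$, so $\sum_i k_in_i=\ell\cdot\sum_i a_in_i=0$ exactly and the phase is $1$. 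Hence the main term equals $T(P_1,P_2,P_3)\sum_{\ell\in\Z^d}c_1(a_1\ell)c_2(a_2\ell)c_3(a_3\ell)$.

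\textbf{Step 4: verify the torus side.} Directly from Definition \ref{def:Ttilde}(1), integrating characters gives $\wt{T}(e(k_1\cdot),e(k_2\cdot),e(k_3\cdot))=1$ if $(k_1,k_2,k_3)=\ell(a_1,a_2,a_3)$ with $\ell\in\Z^d$, and $0$ otherwise. Here the average over the $|a_3|^d$ solutions $y_3$ kills the terms with $a_3\nmid k_3$, and integration over $y_1,y_2$ forces $k_1=a_1\ell$ and $k_2=a_2\ell$. So $\wt{T}(\Phi_1,\Phi_2,\Phi_3)$ equals the same sum, which matches Step 3. Finally, summing the $O_K(1)$ error terms and choosing $A,N$ large completes the proof.
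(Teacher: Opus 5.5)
Your proposal is correct and follows essentially the same route as the paper. Both arguments truncate the Fourier expansion, use $(A,N)$-irrationality to kill every non-resonant frequency triple, identify the resonant triples as $(a_1\ell,a_2\ell,a_3\ell)$ with $\ell\in\Z^d$ via $\gcd(a_1,a_2,a_3)=1$, and evaluate $\wt{T}$ on characters to match the main term. Your integer parametrization $(s,t)$ of the solution lattice is a slightly cleaner way to get genuine integer frequencies $m\cdot\theta$ than the paper's elimination of $n_3$, which writes $\frac{a_1}{a_3}m_3\cdot\theta$ on the torus. The bounded-step hypothesis you flag is genuinely needed: it is absent from the lemma as stated, but implicit in the paper, where the steps equal $q$.
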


\begin{proof}
This is a generalization of \cite[Lemma A.3]{EberhardGreenManners} and we follow the proof there. Write $e(y) := \exp(2\pi iy)$. Approximate each $F_i$ by a truncated Fourier expansion:
$$
F_i(x) = \sum_{m \in \Z^d} c_i(m)e(m\cdot x) + O(\eps),
$$
where $c_i(m)$ is supported on $|m| \ll_{d,M,\eps}1$, $|c_i(m)| \ll_{d,M}1$, and $c_i(0) = \int F_i$. Denote the left-hand side by $T$. We have
$$
T = \sum_{m_1,m_2,m_3 \in \Z^d}c_1(m_1)c_2(m_2)c_3(m_3) \sum_{\substack{n_1 \in P_1,n_2\in P_2,n_3\in P_3 \\ a_1n_1+a_2n_2+a_3n_3=0}} e((n_1m_1+n_2m_2+n_3m_3)\cdot \theta) + O(\eps |P_1|^2).
$$
The inner sum above can be rewritten as
$$
\sum_{\substack{a_3 \mid a_1n_1+a_2n_2 \\ n_1 \in P_1, n_2 \in P_2, -(a_1n_1+a_2n_2)/a_3 \in P_3}} e\Big(n_1\Big(m_1-\frac{a_1}{a_3}m_3\Big)\cdot\theta + n_2\Big(m_2-\frac{a_2}{a_3}m_3\Big)\cdot\theta\Big).
$$
Let $\eps' > 0$ be sufficiently small in terms of $d,M,\eps$. Since $\theta$ is $(A,N)$-irrational for some sufficiently large $A$, the sum above is $O(\eps' |P_1|^2)$ unless 
$$
m_1 - \frac{a_1}{a_3}m_3 = m_2 - \frac{a_2}{a_3}m_3 = 0,
$$
in which case $(m_1,m_2,m_3) = (a_1m, a_2m, a_3m)$ for some $m \in \Z^d$. Hence
$$
T = \sum_{m \in \Z^d} c_1(a_1m)c_2(a_2m)c_3(a_3m) \Big(\sum_{\substack{n_1 \in P_1,n_2\in P_2,n_3\in P_3 \\ a_1n_1+a_2n_2+a_3n_3=0}}1\Big) + O_{d,M,\eps}(\eps' |P_1|^2) + O(\eps |P_1|^2).
$$
By choosing $\eps' > 0$ to be small enough in terms of $d,M,\eps$, we may ensure that the error terms above are $O(\eps |P_1|^2)$, and we have
\begin{equation}\label{eq:equi-dist1}
T = T(P_1,P_2,P_3) \sum_{m \in \Z^d} c_1(a_1m)c_2(a_2m)c_3(a_3m) + O(\eps |P_1|^2).
\end{equation}
On the other hand, since
$$
\wt{T}(F_1,F_2,F_3) = \int F_1(x_1) F_2(x_2) \Big(\E_{x_3:a_1x_1+a_2x_2+a_3x_3=0} F_3(x_3)\Big) d\mu(x_1) d\mu(x_2),
$$
after plugging in the Fourier expansions we can rewrite the above as (at the cost of an error of $O(\eps)$)
$$
\sum_{m_1,m_2,m_3 \in \Z^d} c_1(m_1)c_2(m_2)c_3(m_3) \int e(m_1\cdot x_1+m_2\cdot x_2) \Big(\E_{x_3:a_1x_1+a_2x_2+a_3x_3=0} e(m_3\cdot x_3)\Big) d\mu(x_1) d\mu(x_2).
$$
The inner average over $x_3$ above vanishes unless $m_3 = a_3m$ for some $m \in \Z^d$, in which case the average is $e(m\cdot (-a_1x_1-a_2x_2))$ and the integral over $x_1,x_2$ becomes 
$$
\int e((m_1-a_1m)\cdot x_1+(m_2-a_2m)\cdot x_2) d\mu(x_1) d\mu(x_2).
$$
This integral vanishes unless $m_1=a_1m$ and $m_2 = a_2m$. It follows that
$$
\wt{T}(F_1,F_2,F_3) = \sum_{m \in \Z^d} c_1(a_1m)c_2(a_2m)c_3(a_3m) + O(\eps).
$$
Combining this with \eqref{eq:equi-dist1} completes the proof.
\end{proof}

\section{Maximizing the number of solutions in $X$}\label{sec:proof3}

In this section we prove Proposition \ref{prop:proof3} and Lemma \ref{lem:q-divide-a1a2a3}.  For $r\Mod{q}$, write $F_r: \R \times (\R/\Z)^d \rightarrow [0,1]$ for the function defined by $F_r(s,t) = F(r,s,t)$. Then $\E_{r\Mod{q}} \int F_r = \int F = \delta$ and 
$$
\wt{T}_{a_1,a_2,a_3}(F) = \frac{1}{q^2}\sum_{\substack{r_1,r_2,r_3\Mod{q} \\ a_1r_1+a_2r_2+a_3r_3\equiv 0\Mod{q}}} \wt{T}_{a_1,a_2,a_3}(F_{r_1},F_{r_2}, F_{r_3}).
$$

\begin{lemma}\label{lem:TF-TFa}
For finitely supported measurable functions $F_1, F_2, F_3: \R \times (\R/\Z)^d \rightarrow \C$, we have
$$
\wt{T}_{a_1,a_2,a_3}(F_1,F_2,F_3) = \frac{1}{|a_1a_2a_3|}\wt{T}_{1,1,1}(F_1^{(a_1)}, F_2^{(a_2)}, F_3^{(a_3)}),
$$ 
where $F^{(a)}: \R \times (\R/\Z)^d \rightarrow \C$ is defined by
$$
F^{(a)}(y) = \E_{x: y=ax} F(x).
$$
Moreover, we have $\int F^{(a)} = |a|\int F$.
\end{lemma}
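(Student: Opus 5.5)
We unfold Definition~\ref{def:Ttilde}(2) and apply a change of variables. Here $Y = \R \times (\R/\Z)^d$ and $\mu$ is Lebesgue measure on $Y$. For a nonzero integer $a$, let $\phi_a(x) = ax$; this map is $|a|^d$-to-one on the torus factor and a dilation by $|a|$ on the real factor.

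\textbf{The integral identity.} For integrable $F$,
$$
\int F^{(a)}\, d\mu(y) = \int \E_{x:ax=y} F(x)\, d\mu(y) = |a| \int F\, d\mu(x).
$$
This follows from the pushforward formula: the pushforward of $\mu$ under $\phi_a$ equals $|a|\mu$. On the $\R$ factor this is the Jacobian $|a|$ of the dilation. On the torus factor each fibre has $|a|^d$ points and $\phi_a$ preserves Haar measure. Averaging over the fibre, rather than summing, therefore leaves exactly the factor $|a|$. This settles the second claim.

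\textbf{The main identity.} Write $y_i = a_i x_i$. By definition,
$$
|a_3|\,\wt{T}_{a_1,a_2,a_3}(F_1,F_2,F_3) = \int\int F_1(x_1)F_2(x_2)\,\E_{x_3: a_3x_3 = -a_1x_1-a_2x_2} F_3(x_3)\, d\mu(x_1)\,d\mu(x_2).
$$
The inner average is exactly $F_3^{(a_3)}(-y_1-y_2)$. So the integrand depends on $x_1, x_2$ only through $F_1(x_1)$, $F_2(x_2)$ and $G(y_1+y_2)$, where $G(y) := F_3^{(a_3)}(-y)$.

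Now substitute $x_1 \mapsto y_1 = a_1x_1$ using the fibre-averaging identity
$$
\int H(x)\,K(ax)\, d\mu(x) = \frac{1}{|a|}\int H^{(a)}(y)\,K(y)\, d\mu(y).
$$
This is the same pushforward computation as above, applied to $HK\circ\phi_a$. Doing this in $x_1$ and then in $x_2$ gives
$$
|a_3|\,\wt{T}_{a_1,a_2,a_3}(F_1,F_2,F_3) = \frac{1}{|a_1a_2|}\int\int F_1^{(a_1)}(y_1)\,F_2^{(a_2)}(y_2)\,F_3^{(a_3)}(-y_1-y_2)\, d\mu(y_1)\,d\mu(y_2).
$$
With coefficients $(1,1,1)$ the inner average in Definition~\ref{def:Ttilde} is over the single point $y_3 = -y_1-y_2$. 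The right-hand side is therefore $|a_1a_2|^{-1}\,\wt{T}_{1,1,1}(F_1^{(a_1)},F_2^{(a_2)},F_3^{(a_3)})$, and dividing by $|a_3|$ gives the claim.

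\textbf{Remaining checks.} The only delicate point is justifying the fibre-averaging identity on the mixed space $\R\times(\R/\Z)^d$, including for negative $a$. Negative $a$ is harmless, since $x\mapsto -x$ preserves $\mu$.

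The identity is linear in $H\otimes K$, so it suffices to check it on products $h(s)u(t)\,k(s)v(t)$. On the $\R$ factor it is the substitution $s' = as$. On the torus factor it becomes $\int u(t)v(at)\,dt = \int \big(\E_{at'=t} u(t')\big) v(t)\,dt$, which can be checked on characters. Since $F_i$ is measurable and finitely supported, Fubini applies.
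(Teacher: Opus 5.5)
Your argument is correct. For the main identity it is the paper's argument: the paper also substitutes $y_i=a_ix_i$, and it handles the torus fibres by splitting $[0,a_i)^d$ into the unit cubes $k_i+[0,1)^d$. That is exactly your fibre-averaging identity $\int H(x)K(ax)\,d\mu(x)=|a|^{-1}\int H^{(a)}K\,d\mu$, written out in coordinates. The only difference is how the second claim is obtained. You prove $\int F^{(a)}=|a|\int F$ first and directly, as the case $K\equiv 1$ of the fibre identity. The paper proves the main identity first and then extracts the integral identity from it: it takes $F_2=F_3=1_{[-M,M]\times(\R/\Z)^d}$, divides by $2M$ and lets $M\to\infty$. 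Your ordering is more direct and avoids that limiting argument.

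One sentence needs fixing. The pushforward of $\mu$ under $\phi_a$ is $|a|^{-1}\mu$, not $|a|\mu$; for example $\phi_2^{-1}([0,1]\times(\R/\Z)^d)=[0,\tfrac12]\times(\R/\Z)^d$. Read literally, your claim gives $\int F\,d\mu=\int F^{(a)}\,d(\phi_a)_*\mu=|a|\int F^{(a)}\,d\mu$, which is the reciprocal of the constant you want. What you actually use is $\int F\,d\mu=|a|^{-1}\int F^{(a)}\,d\mu$. This is what your explicit checks (the substitution $s'=as$ and the character computation on the torus) correctly establish, and it matches your displayed fibre identity. State it that way.
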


\begin{proof}
Without loss of generality, we may assume that $a_1,a_2,a_3 > 0$. By definition, we have
$$
\wt{T}_{a_1,a_2,a_3}(F_1,F_2,F_3) = \frac{1}{a_3}\int F_1(x_1) F_2(x_2) F_3^{(a_3)}(-a_1x_1-a_2x_2) d\mu(x_1) d\mu(x_2).
$$
View $x_1,x_2$ as elements in $\R\times [0,1)^d$ in the natural way. By making a change of variables $y_i = a_ix_i$, we can rewrite the integral above as
$$
\frac{1}{(a_1a_2)^{d+1}}\int_{\R \times [0,a_1)^d} \int_{\R \times [0,a_2)^d} F_1\Big(\frac{y_1}{a_1}\Big) F_2\Big(\frac{y_2}{a_2}\Big) F_3^{(a_3)}(-y_1-y_2) d\mu(y_1) d\mu(y_2).
$$
After dividing $[0,a_i)^d$ into cubes of the form $k_i + [0,1)^d$ for some $k_i\in \{0,1,\cdots,a_i-1\}^d$, we can rewrite the above integral as
$$
\frac{1}{(a_1a_2)^{d+1}} \sum_{k_1,k_2} \int_{\R\times [0,1)^d} \int_{\R\times [0,1)^d} F_1\Big(\frac{y_1 + (0,k_1)}{a_1}\Big) F_2\Big(\frac{y_2 + (0,k_2)}{a_2}\Big) F_3^{(a_3)}(-y_1-y_2) d\mu(y_1) d\mu(y_2).
$$
Moving the sum over $k_1,k_2$ inside leads to
$$
\wt{T}_{a_1,a_2,a_3}(F_1,F_2,F_3) = \frac{1}{a_1a_2a_3} \int F_1^{(a_1)}(y_1) F_2^{(a_2)}(y_2) F_3^{(a_3)}(-y_1-y_2) d\mu(y_1) d\mu(y_2).
$$
This completes the proof of the identity relating $\wt{T}_{a_1,a_2,a_3}$ with $\wt{T}_{1,1,1}$. In particular, we have
$$
\wt{T}_{a,1,1}(F, F_0, F_0) = \frac{1}{|a|}\wt{T}_{1,1,1}(F^{(a)}, F_0, F_0).
$$
Take $F_0$ to be the characteristic function of $[-M, M] \times (\R/\Z)^d$ for some sufficiently large $M$. Dividing both sides by $2M$ the left-hand side and the right-hand side above tend to $\int F$ and $|a|^{-1}\int F^{(a)}$ as $M\rightarrow \infty$. This implies that $\int F^{(a)} = |a|\int F$.
\end{proof}

\begin{lemma}\label{lem:max-T111}
For finitely supported measurable functions $F_1, F_2, F_3: \R \times (\R/\Z)^d \rightarrow [0,1]$ with $\delta_i = \int F_i$ for $i \in \{1,2,3\}$, we have
$$
\wt{T}_{1,1,1}(F_1, F_2, F_3) \leq G(\delta_1, \delta_2, \delta_3).
$$
\end{lemma}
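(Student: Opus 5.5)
We need to prove $\wt{T}_{1,1,1}(F_1,F_2,F_3)\le G(\delta_1,\delta_2,\delta_3)$ on $\R\times(\R/\Z)^d$. The plan is to reduce to the integer rearrangement inequality of Lev \cite[Theorem 1]{lev1998number}, already used in the proof of Lemma \ref{lem:G-sumbound}. We discretize the torus and realize $\wt{T}_{1,1,1}$ as a limit of normalized counts of solutions to $x+y+z=0$ in $\Z$.

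\textbf{Step 1: regularize.} By density in $L^1$ and continuity of $\wt{T}_{1,1,1}$ in each argument (it is trilinear, with $|\wt{T}(F_1,F_2,F_3)|\le \|F_1\|_1\|F_2\|_1\|F_3\|_\infty$), we may assume each $F_i$ is a finite combination of indicators of boxes. Writing each $[0,1]$-valued step function as an average of indicators via the layer-cake formula $F=\int_0^1 1_{F>t}\,dt$ does not directly help, because $\wt{T}$ is trilinear rather than linear.

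A cleaner route is to replace each $F_i$ by the indicator of a set in a finer space. Take $X'=\R\times(\R/\Z)^d\times(\R/\Z)$ and $\wt F_i(s,t,u)=1_{u\in[0,F_i(s,t))}$. This does not preserve the trilinear form either, since the $u$-coordinates of a solution also satisfy $u_1+u_2+u_3=0$. So we should instead argue via the discrete model.

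\textbf{Step 2: discrete model.} Choose $\theta\in(\R/\Z)^d$ that is $(A,N)$-irrational with $A,N$ large, and set $f_i(n)=F_i(n/N,n\theta)$, using Lipschitz approximations of the $F_i$. Lemma \ref{lem:T-Ttilde} (with $q=1$ and all $a_j=1$) gives
$$T_{1,1,1}(f_1,f_2,f_3)=(\wt{T}_{1,1,1}(F_1,F_2,F_3)+O(\eps))N^2.$$
Moreover $\sum_n f_i(n)=(\delta_i+O(\eps))N$ by \cite[Lemma A.4]{EberhardGreenManners}. It then suffices to prove that for functions $f_i:\Z\to[0,1]$ with $\sum f_i=m_i$ we have $T_{1,1,1}(f_1,f_2,f_3)\le G(m_1,m_2,m_3)+O(m_1+m_2+m_3)$.

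\textbf{Step 3: fractional rearrangement.} To handle $[0,1]$-valued functions, pick a large integer $L$. Replace $f_i$ by the set $B_i=\{Ln+k: 0\le k< \lfloor Lf_i(n)\rfloor\}$, shifted so that every solution $x+y+z=0$ with $x,y,z\in\Z$ lifts to roughly $L^2$ solutions among the $B_i$. Concretely, consider the solutions $Ln_1+k_1+Ln_2+k_2+Ln_3+k_3=0$ with $k_j$ in a centered window. Each triple $(n_1,n_2,n_3)$ with $n_1+n_2+n_3=0$ then yields at least $\tfrac{1}{2}L^2 f_1f_2f_3$-type contributions. This is not immediately $\ge L^2 f_1(n_1)f_2(n_2)f_3(n_3)$, which is the main obstacle.

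I would instead first reduce to $0/1$ functions by convexity. For fixed $F_2,F_3$, the map $F_1\mapsto\wt T$ is linear, so its maximum over $\{0\le F_1\le1,\int F_1=\delta_1\}$ is attained at an indicator function (bathtub principle: take the superlevel set of $F_2*F_3$). Repeating this for $F_2$ and then $F_3$ shows that the supremum is approached by indicators $1_{S_1},1_{S_2},1_{S_3}$ with the same masses.

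For indicators of nice sets, Step 2 gives sets $A_i=\{n:(n/N,n\theta)\in S_i\}$ of sizes $(\delta_i+o(1))N$ with $T(A_1,A_2,A_3)=(\wt T+o(1))N^2$. Lev's rearrangement inequality bounds this count by the count for centered intervals, which is $(G(\delta_1,\delta_2,\delta_3)+o(1))N^2$. Letting $\eps\to0$ gives the lemma.

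The delicate point is Step 1 combined with the bathtub step. We need the sets $S_i$ to be approximable by Lipschitz functions in $L^1$ without changing $\wt T$ much, which follows from the trilinear $L^1$–$L^\infty$ bound above.
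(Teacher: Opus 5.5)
Your final argument (bathtub principle in each slot on $\R\times(\R/\Z)^d$, then discretize the resulting indicators, then Lev) takes a genuinely different route from the paper's. It is salvageable, but the step you flag as delicate is not justified by what you say.

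\textbf{Comparison with the paper.} The paper never discretizes an indicator. It applies Lemma \ref{lem:T-Ttilde} and the equidistribution of $\sum_n f_i(n)$ only to the Lipschitz approximants, which gives $[0,1]$-valued $f_i$ on $\Z$. It then converts these to sets by random rounding: each $n$ goes into $A_i$ independently with probability $f_i(n)$, so $\E\,T(A_1,A_2,A_3)=T(f_1,f_2,f_3)$ while the sizes concentrate. Lev's inequality then applies.

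You do the rounding on the continuous side instead, and that part is correct. $\wt{T}_{1,1,1}$ is symmetric and linear in each slot, with kernel $h(y_1)=\int F_2(y_2)F_3(-y_1-y_2)\,d\mu(y_2)$. Since $\mu$ is non-atomic, $F_1$ can be replaced by some $1_{S_1}$ with $\mu(S_1)=\delta_1$ without decreasing $\wt{T}$. This gives an exact, deterministic reduction to indicators. The price is that you must then transfer non-Lipschitz functions to $\Z$.

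\textbf{The gap.} That transfer is where your justification fails. The trilinear $L^1$--$L^\infty$ bound controls only $\wt{T}$, not the discrete count. The quantity $\sum_n|1_{S_i}-\phi_i|(\pi(n))$ is not controlled by $\int|1_{S_i}-\phi_i|$ when the integrand is not Lipschitz. Your bathtub sets need not be ``nice'' either. Since $\pi(\Z)$ is countable, altering $S_i$ on a null set can make $A_i=\{n:\pi(n)\in S_i\}$ empty without changing $\wt{T}$.

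To close the gap, proceed in three steps.
\begin{enumerate}
\item Replace each $S_i$ by a finite union of boxes $B_i$ with $\mu(S_i\triangle B_i)\le\eps$. This is the right place for the trilinear bound.
\item Sandwich $\phi_i^-\le 1_{B_i}\le\phi_i^+$ with Lipschitz $\phi_i^\pm$ and $\int(\phi_i^+-\phi_i^-)\le\eps$.
\item Use positivity: $|A_i|\le\sum_n\phi_i^+(\pi(n))\le(\delta_i+O(\eps))N$, and $T(A_1,A_2,A_3)\ge T(\phi_1^-\circ\pi,\phi_2^-\circ\pi,\phi_3^-\circ\pi)\ge(\wt{T}(1_{B_1},1_{B_2},1_{B_3})-O(\eps))N^2$.
\end{enumerate}

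\textbf{A shorter fix.} Apply your bathtub idea to the discrete statement you already reduced to at the end of Step 2. For fixed $f_2,f_3$, the map $f_1\mapsto T(f_1,f_2,f_3)$ is linear. So it is maximized by a function equal to $1$ on $\lfloor m_1\rfloor$ points and fractional at one more. Do this in each slot, then enlarge each function to the indicator of its support, which has size at most $\lceil m_i\rceil$. This produces sets to which Lev applies with an $O(N)$ error. It is a deterministic substitute for the paper's random rounding and needs no discretization of indicators. Your abandoned Steps 1 and 3 can simply be dropped.
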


\begin{proof}
We will prove that 
$$
\wt{T}_{1,1,1}(F_1, F_2, F_3) \leq G(\delta_1, \delta_2, \delta_3) + O(\eps)
$$
for every $\eps > 0$. By approximating each $F_i$ by a Lipschitz function of norm at most $O_{\eps}(1)$ with an $L^1$-error of at most $\eps$, we may assume that $F_1,F_2,F_3$ have Lipschitz norm at most $M$ for some $M$. Let $A$ be sufficiently large in terms of $\eps,M$, let $N$ be sufficiently large in terms of $A$, and let $\theta \in (\R/\Z)^d$ be $(A,N)$-irrational. For $i \in \{1,2,3\}$, let $f_i: \Z\rightarrow [0,1]$ be the function defined by
$$
f_i(n) = F_i\Big(\frac{n}{N}, n\theta\Big).
$$
Since $\theta$ is $(A,N)$-irrational, we have $\sum_n f_i(n) = (\delta_i + O(\eps))N$ by \cite[Lemma A.4]{EberhardGreenManners} and
$$
T_{1,1,1}(f_1,f_2, f_3)  = (\wt{T}_{1,1,1}(F_1,F_2,F_3) + O(\eps))N^2
$$
by Lemma \ref{lem:T-Ttilde}. For $i\in \{1,2,3\}$, let $A_i \subset \Z$ be a random subset where each integer $n$ is chosen to be in $A_i$ with probability $f_i(n)$. Then $\E |A_i| = (\delta_i + O(\eps))N$, and moreover, we have $|A_i| = (\delta_i + O(\eps))N$ with probability $1-o(1)$ by Chebyshev's inequality. Since
$$
\E T_{1,1,1}(A_1,A_2,A_3) = T_{1,1,1}(f_1,f_2,f_3),
$$
it follows again by Chebyshev's inequality that there exists $A_1,A_2,A_3 \subset \Z$ with $|A_i| = (\delta_i + O(\eps))N$ such that
$$
T_{1,1,1}(A_1,A_2,A_3) \geq T_{1,1,1}(f_1,f_2,f_3) - \eps N^2.
$$
On the other hand, by a rearrangement inequality (see, for example, \cite[Theorem 1]{lev1998number}), $T_{1,1,1}(A_1,A_2,A_3)$ is maximized when each $A_i$ is a balanced interval around $0$, and hence
$$
T_{1,1,1}(A_1,A_2,A_3) \leq  (G(\delta_1,\delta_2,\delta_3) + O(\eps))N^2.
$$
Combining the estimates above concludes the proof.
\end{proof}

\begin{proof}[Proof of Proposition \ref{prop:proof3}]
It follows from Lemma \ref{lem:TF-TFa} that
$$
\wt{T}_{a_1,a_2,a_3}(F) = \frac{1}{|a_1a_2a_3|q^2} \sum_{\substack{r_1,r_2,r_3\Mod{q} \\ a_1r_1+a_2r_2+a_3r_3\equiv 0\Mod{q}}}  \wt{T}_{1,1,1}(F_{r_1}^{(a_1)}, F_{r_2}^{(a_2)}, F_{r_3}^{(a_3)}).
$$
Let $g:\Z/q\Z \rightarrow [0,1]$ be the function defined by $g(r) = (q\delta)^{-1}\int F_r$, so that $\sum_r g(r) = \delta^{-1}\E_r \int F_r = \delta^{-1}\int F = 1$. Then $\int F_{r_i}^{(a_i)} = |a_i| \int F_{r_i} = q\delta |a_i|g(r_i)$. It follows from Lemma \ref{lem:max-T111} that
$$
\wt{T}_{a_1,a_2,a_3}(F) \leq \frac{1}{|a_1a_2a_3|q^2} \sum_{\substack{r_1,r_2,r_3\Mod{q} \\ a_1r_1+a_2r_2+a_3r_3\equiv 0\Mod{q}}} G(q\delta |a_1|g(r_1), q\delta |a_2|g(r_2), q\delta |a_3|g(r_3)).
$$
Since the summand above is equal to $(q\delta)^2 G(|a_1|g(r_1), |a_2|g(r_2), |a_3|g(r_3))$, this concludes the proof of Proposition \ref{prop:proof3}.
\end{proof}

\begin{proof}[Proof of Lemma \ref{lem:q-divide-a1a2a3}]
Clearly $\gamma_{a_1,a_2,a_3}(q) \leq \gamma_{a_1,a_2,a_3}(qm)$. To prove $\gamma_{a_1,a_2,a_3}(q) \geq \gamma_{a_1,a_2,a_3}(qm)$, let $g: \Z/qm\Z \rightarrow [0,1]$ be an arbitrary function with $\sum_r g(r) = 1$, and we will prove that
$$
\gamma_{a_1,a_2,a_3}(g) \leq \gamma_{a_1,a_2,a_3}(\wt{g}),
$$
where $\wt{g}: \Z/q\Z \rightarrow [0,1]$ is defined by
$$
\wt{g}(r) = \sum_{s \in \Z/m\Z} g(r,s),
$$
where we identify $\Z/qm\Z$ with $\Z/q\Z \times \Z/m\Z$ in the natural way since $(q,m)=1$. By definition of $\gamma_{a_1,a_2,a_3}(g)$, we have
$$
\gamma_{a_1,a_2,a_3}(g) = \frac{1}{|a_1a_2a_3|} \sum_{\substack{r_1,r_2,r_3\Mod{q} \\ \sum_i a_ir_i\equiv 0\Mod{q}}} \sum_{\substack{s_1,s_2,s_3\Mod{m} \\ \sum_ia_is_i\equiv 0\Mod{m}}} G(|a_1|g(r_1,s_1), |a_2|g(r_2,s_2), |a_3|g(r_3,s_3)).
$$
Since $(m,a_1a_2a_3)=1$, after a change of variables $s_i'=a_is_i$ we may rewrite the inner sum over $s_1,s_2,s_3$ as
$$
\sum_{\substack{s_1',s_2',s_3'\Mod{m} \\ s_1'+s_2'+s_3'\equiv 0\Mod{m} }} G(|a_1|f_1(s_1'), |a_2|f_2(s_2'), |a_3|f_3(s_3')),
$$
where $f_i: \Z/m\Z \rightarrow [0,1]$ is defined by $f_i(s) = g(r_i, a_i^{-1}s)$. Since $\sum_{s} f_i(s) = \wt{g}(r_i)$, Lemma \ref{lem:G-sumbound} implies that the inner sum over $s_1,s_2,s_3$ is at most $G(|a_1|\wt{g}(r_1), |a_2|\wt{g}(r_2), |a_3|\wt{g}(r_3))$. This esbalishes the desired claim that $\gamma_{a_1,a_2,a_3}(g) \leq \gamma_{a_1,a_2,a_3}(\wt{g})$.
\end{proof}

\section{Computation of $\gamma_{1,1,3}(3)$ and $\gamma_{1,-1,3}(3)$}\label{sec:47/122}

In this section we prove Theorem \ref{thm:gamma113}. We start with some basic properties concerning the function $G(u,v,w)$.

\begin{lemma}\label{lem:G-convexity}
For all $u,v \geq 0$, the function $w\mapsto G(u,v,w)$ is concave on $w \in [0,+\infty)$. In particular, we have
$$
\sum_{i=1}^n G(u,v,w_i) \leq n G\Big(u,v,\frac{1}{n}\sum_{i=1}^nw_i\Big)
$$
for all $w_1,\cdots,w_n \geq 0$.
\end{lemma}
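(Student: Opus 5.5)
We want to show that $w\mapsto G(u,v,w)$ is concave on $[0,\infty)$; the displayed inequality then follows from Jensen's inequality (the finite form of concavity) applied to $w_1,\dots,w_n$ with equal weights $1/n$.

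For concavity, I would work with the geometric description rather than the case formula. By Definition \ref{defn:G}, $G(u,v,w)$ is the area of the slices $\{x\in[-u/2,u/2]\}\cap\{y\in[-v/2,v/2]\}$ with $|x+y|\le w/2$. Substitute $s=x+y$. The map $(x,y)\mapsto(x,s)$ has Jacobian $1$, so
$$
G(u,v,w)=\int_{-w/2}^{w/2} h(s)\,ds, \qquad h(s)=\bigl|[-u/2,u/2]\cap[s-v/2,s+v/2]\bigr|,
$$
where $h(s)$ is the length of the $x$-range with $|x|\le u/2$ and $|s-x|\le v/2$. Differentiating in $w$ gives
$$
\frac{\partial G}{\partial w}=\tfrac12\bigl(h(w/2)+h(-w/2)\bigr)=h(w/2),
$$
using that $h$ is even.

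So it suffices that $h$ is nonincreasing on $[0,\infty)$. Now $h=1_{[-u/2,u/2]}*1_{[-v/2,v/2]}$ is a convolution of two symmetric intervals, so it is a symmetric trapezoid: constant equal to $\min(u,v)$ for $|s|\le |u-v|/2$, then decreasing linearly to $0$ at $|s|=(u+v)/2$. Hence $w\mapsto h(w/2)$ is nonincreasing. Since $G$ is continuous in $w$ with a nonincreasing derivative, $w\mapsto G(u,v,w)$ is concave.

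As a cross-check, assume WLOG $u\le v$. The explicit formula after Definition \ref{defn:G} gives the same piecewise shape:
\begin{itemize}
\item $G=uw$ for $w\le v-u$;
\item $G=uv-\tfrac14(u+v-w)^2$ on the middle range;
\item $G=uv$ for $w\ge u+v$.
\end{itemize}
The slopes are $u$, then $\tfrac12(u+v-w)$, then $0$, which are continuous and nonincreasing. The middle regime covers both orderings of $w$ relative to $u,v$ by symmetry of $G$.

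There is no real obstacle here. The only point needing care is that the piecewise formula in the paper assumes an ordering of the three arguments, and $w$ may sit anywhere in that order. The integral representation avoids this case analysis entirely.
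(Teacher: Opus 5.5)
Your proof is correct and follows essentially the same route as the paper. Both show that $\partial_w G(u,v,w)$ equals $\min(u,v)$, then $\tfrac12(u+v-w)$, then $0$, which is nonincreasing, so $G$ is concave in $w$. Your slicing identity $G=\int_{-w/2}^{w/2}h(s)\,ds$, with $h$ the trapezoid $1_{[-u/2,u/2]}*1_{[-v/2,v/2]}$, is simply a clean way of deriving that derivative, which the paper states without computation.
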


\begin{proof}
For fixed $u,v$, the derivative of the function $w\mapsto G(u,v,w)$ can be computed to be
$$
\frac{\partial}{\partial w} G(u,v,w) = \begin{cases} \min(u,v) & \text{if }w \leq |u-v|, \\ \frac{1}{2}(u+v-w) & \text{if } |u-v| \leq w \leq u+v, \\ 0 & \text{if }w \geq u+v, \end{cases}
$$
which is a decreasing function.
\end{proof}

\begin{lemma}\label{lem:G-uv}
For all $u,v,w \geq 0$ we have 
$$
G(u,v,w) \leq G\Big(\frac{u+v}{2}, \frac{u+v}{2}, w\Big).
$$
\end{lemma}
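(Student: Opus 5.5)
We want $G(u,v,w) \le G(m,m,w)$ with $m=(u+v)/2$. The plan is to use the geometric definition: $G(u,v,w)$ is the area of
$$\{(x,y): |x|\le u/2,\ |y|\le v/2,\ |x+y|\le w/2\}.$$
Equivalently, it is the integral over $t=x+y$ with $|t|\le w/2$ of the length of the intersection of the intervals $[-u/2,u/2]$ and $t-[-v/2,v/2]$. That length equals $(1_{[-u/2,u/2]}*1_{[-v/2,v/2]})(t)$, so
$$G(u,v,w)=\int_{-w/2}^{w/2} (1_{[-u/2,u/2]}*1_{[-v/2,v/2]})(t)\,dt .$$
The convolution of two centered intervals of lengths $u,v$ is a symmetric trapezoid. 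It has total mass $uv$, plateau height $\min(u,v)$, and plateau on $|t|\le |u-v|/2$, then decreases linearly with slope $1$ to zero at $|t|=(u+v)/2$.

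For $(m,m)$ we get the triangle $\max(0,m-|t|)$, with mass $m^2\ge uv$ by AM--GM and the same support $|t|\le m$. The key step is the pointwise comparison
$$\min(u,v,\,m-|t|)_+\le (m-|t|)_+ ,$$
which holds because the trapezoid equals $\min(\min(u,v),\, m-|t|)_+$. Integrating over $|t|\le w/2$ gives the claim directly.

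Alternatively, one can verify the inequality from the explicit formula. Assume $u\le v$. In each regime ($w\le v-u$, $v-u\le w\le u+v$, $w\ge u+v$) we compare with $G(m,m,w)$, which equals $mw-w^2/4$ for $w\le 2m$ and $m^2$ otherwise. For instance, when $v-u\le w\le u+v$,
$$G(m,m,w)-G(u,v,w)=\Big(mw-\tfrac{w^2}{4}\Big)-\Big(uv-\tfrac14(u+v-w)^2\Big)=\tfrac14(v-u)^2\ge0.$$

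The only real obstacle is correctly identifying the trapezoid form of the convolution, which is elementary. No earlier results are needed beyond Definition \ref{defn:G}.
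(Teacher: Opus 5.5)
Your proof is correct, and your main argument takes a different route from the paper's. The paper works with the closed form of $G$. If $u+v\le w$, it reduces to $uv\le \frac14(u+v)^2$. Otherwise it fixes $s=(u+v)/2$ and $w$, and writes $u\mapsto G(u,2s-u,w)$ on $[0,s]$ piecewise: $uw$ for $u\le s-\frac w2$, and $-(u-s)^2+sw-\frac{w^2}{4}$ for $s-\frac w2\le u\le s$. It then observes that this function is maximized at $u=s$.

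You instead slice the region along $t=x+y$, so that $G(u,v,w)$ becomes the integral over $|t|\le w/2$ of the trapezoidal profile $\min(\min(u,v),m-|t|)_+$. You then dominate this pointwise by the triangle $(m-|t|)_+$.

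What your version buys:
\begin{itemize}
\item It is case-free and never needs the explicit formula for $G$.
\item It proves more. The comparison survives integration against any nonnegative weight in $t$, not just $1_{[-w/2,w/2]}$.
\item Since the profile is nondecreasing in $\min(u,v)$ at fixed $m$, it also yields the monotonicity in $u$ along $u+v=2s$ that the paper reads off from its piecewise formula.
\end{itemize}

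What the paper's computation buys is the exact deficit $(s-u)^2=\frac14(v-u)^2$ in the regime $|u-v|\le w\le u+v$. Your secondary calculation recovers this. That calculation treats only one regime, but the remaining ones are routine: for $w\le v-u$ the difference is $w\bigl(\frac{v-u}{2}-\frac w4\bigr)\ge 0$, and for $w\ge u+v$ it is AM--GM. Both are in any case already covered by your main argument, so nothing is missing.
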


\begin{proof}
If $u+v \leq w$, then 
$$
G(u,v,w) = uv, \ \ G\Big(\frac{u+v}{2}, \frac{u+v}{2}, w\Big) = \frac{1}{4}(u+v)^2,
$$
and hence the conclusion holds. Now let $s = (u+v)/2$ and assume that $2s \geq w$. Without loss of generality, assume that $u \leq s$. For fixed $s,w$, the function $u\mapsto G(u, 2s-u, w)$ for $u \in [0,s]$ is defined by
$$
G(u, 2s-u, w) = \begin{cases} uw & \text{if }0 \leq u \leq s-\frac{w}{2}, \\ -(u-s)^2 + sw-\frac{w^2}{4} & \text{if }s-\frac{w}{2} \leq u \leq s. \end{cases}
$$
This function is clearly maximized at $u = s$.
\end{proof}

\subsection{Evaluating $\gamma_{1,1,\pm 3}$}

Let $g: \Z/3\Z\rightarrow [0,1]$ be a function with $g(0)+g(1)+g(2)=1$. Then
$$
\gamma_{1,1,\pm 3}(g) = \frac{1}{3} \sum_{x,z\Mod{3}} G(g(x), g(-x), 3g(z)).
$$
Let $u = g(0)$, $v = g(1)$, $w = g(2)$. The term with $x=z=0$ is $G(u,u,3u) = u^2$ and each of the four terms with $x,z \neq 0$ is $G(v,w,3g(z)) \leq vw$.  Hence
$$
\gamma_{1,1,\pm 3}(g) \leq \frac{1}{3}(u^2 + 4vw + 2G(v,w,3u) + G(u,u,3v) + G(u,u,3w)).
$$
By applying the AM-GM inequality to $4vw$, Lemma \ref{lem:G-uv} to $G(v,w,3u)$, and Lemma \ref{lem:G-convexity} to $G(u,u,3v) + G(u,u,3w)$, we obtain
\begin{align*}
\gamma_{1,1,\pm 3}(g) &\leq   \frac{1}{3}\Big(u^2 + (v+w)^2 + 2G\Big(\frac{v+w}{2},\frac{v+w}{2},3u\Big) + 2G\Big(u,u,\frac{3(v+w)}{2}\Big)\Big) \\
&= \frac{1}{3}\Big(u^2 + (1-u)^2 + 2G\Big(\frac{1-u}{2},\frac{1-u}{2},3u\Big) + 2G\Big(u,u,\frac{3(1-u)}{2}\Big)\Big).
\end{align*}
Let $f(u)$ be the right-hand side above. Divide into cases according to whether $1-u < 3u$ (or $u > 1/4$) and whether $2u < 3(1-u)/2$ (or $u < 3/7$). In each of the three regions $u \leq 1/4$, $1/4 \leq u \leq 3/7$, and $u \geq 3/7$, using the formulas for $G$ one can explicitly compute $f(u)$ as a quadratic function. This routine process leads to the conclusion that $f(u)$ is maximized at $u = 9/13$ and $f(9/13) = 5/13$.

\subsection{Evaluating $\gamma_{1,-1,\pm 3}$}

Let $g: \Z/3\Z\rightarrow [0,1]$ be a function with $g(0)+g(1)+g(2)=1$. Then
$$
\gamma_{1,-1,\pm 3}(g) = \frac{1}{3} \sum_{x,z\Mod{3}} G(g(x), g(x), 3g(z)).
$$
Let $u = g(0)$, $v = g(1)$, $w = g(2)$. Note that the right-hand side above is symmetric in $u,v,w$. Without loss of generality, assume that $u \leq v \leq w$. Using the formula
$$
G(\alpha,\alpha,\beta) = \alpha^2 - \Big(\alpha - \frac{\beta}{2}\Big)_+^2,
$$
where $\alpha_+= \max(\alpha,0)$, we obtain
$$
\gamma_{1,-1,\pm 3}(g) = u^2+v^2+w^2 - \frac{1}{3}\Big[\Big(v - \frac{3}{2}u\Big)_+^2 + \Big(w - \frac{3}{2}v\Big)_+^2 + \Big(w - \frac{3}{2}u\Big)_+^2\Big].
$$

\subsection*{Case 1: $v \geq (3/2)u$ and $w \geq (3/2)v$}

In this case, we have
$$
\gamma_{1,-1,\pm 3}(g) =  u^2+v^2+w^2 - \frac{1}{3}\Big[\Big(v - \frac{3}{2}u\Big)^2 + \Big(w - \frac{3}{2}v\Big)^2 + \Big(w - \frac{3}{2}u\Big)^2\Big].
$$
Direct computations show that
$$
\gamma_{1,-1,\pm 3}(g) = \frac{47}{122}(u+v+w)^2 - \frac{1}{366}\Big[18u - \frac{7}{3}(v+w)\Big]^2 - \frac{1}{27}\Big(w - \frac{7}{2}v\Big)^2.
$$
Since $u+v+w=1$, this is maximized when $w = (7/2)v$ and $18u = (7/3)(v+w)$, or when $(u,v,w) = (7/61, 12/61, 42/61)$, and we have $\gamma_{1,-1,\pm 3}(g) \leq 47/122 \approx 0.3852$.

\subsection*{Case 2: $v \geq (3/2)u$ and $w \leq (3/2)v$}

In this case, we have $v \geq (2/5)(1-u)$, $w \leq (3/5)(1-u)$, and
$$
\gamma_{1,-1,\pm 3}(g) = u^2+v^2+w^2 -\frac{1}{3}\Big[\Big(v - \frac{3}{2}u\Big)^2 + \Big(w - \frac{3}{2}u\Big)^2\Big] = u-\frac{3}{2}u^2 + \frac{2}{3}(v^2+w^2).
$$
Since
$$
2(v^2+w^2) = (v+w)^2 + (v-w)^2 \leq (1-u)^2 + \frac{1}{25}(1-u)^2 = \frac{26}{25}(1-u)^2,
$$
it follows that
$$
\gamma_{1,-1,\pm 3}(g) \leq u-\frac{3}{2}u^2 + \frac{26}{75}(1-u)^2 = -\frac{173}{150}\Big(u-\frac{23}{173}\Big)^2 + \frac{127}{346} \leq \frac{127}{346} \approx 0.367.
$$

\subsection*{Case 3: $v \leq (3/2)u$ and $w \geq (3/2)v$}

In this case, we have $u \geq (2/5)(1-w)$, $v \leq (3/5)(1-w)$, and
$$
\gamma_{1,-1,\pm 3}(g) =  u^2+v^2+w^2 - \frac{1}{3}\Big[\Big(w - \frac{3}{2}v\Big)^2 + \Big(w - \frac{3}{2}u\Big)^2\Big] = w-\frac{2}{3}w^2 + \frac{1}{4}(u^2+v^2).
$$
Since
$$
2(u^2+v^2) = (u+v)^2 + (u-v)^2 \leq (1-w)^2 + \frac{1}{25}(1-w)^2 = \frac{26}{25}(1-w)^2,
$$
it follows that
$$
\gamma_{1,-1,\pm 3}(g) \leq w -\frac{2}{3}w^2 + \frac{13}{100}(1-w)^2 = -\frac{161}{300}\Big(w - \frac{111}{161}\Big)^2 + \frac{62}{161} \leq \frac{62}{161} \approx 0.3851.
$$

\subsection*{Case 4: $v < (3/2)u$, $w <  (3/2)v$, and $w \geq (3/2)u$}

In this case, we have
$$
\gamma_{1,-1,\pm 3}(g) =  u^2+v^2+w^2 - \frac{1}{3}\Big(w - \frac{3}{2}u\Big)^2 = \frac{1}{4}u^2 + v^2 + \frac{2}{3}w^2 + uw.
$$
Let $F(u,v,w)$ be the function on the right-hand side above. Consider $F(u-\eps, v+\eps, w)$ for some sufficiently small $\eps > 0$. By Taylor expansion we have
$$
F(u-\eps, v+\eps, w) = F(u,v,w) + \Big(\frac{\partial F}{\partial v}(u,v,w) - \frac{\partial F}{\partial u}(u,v,w)\Big) \eps + O(\eps^2).
$$
Since
$$
\frac{\partial F}{\partial u}(u,v,w) = \frac{1}{2}u + w < \frac{1}{2}v + \frac{3}{2}v = 2v = \frac{\partial F}{\partial v}(u,v,w) = 2v,
$$
it follows that $F(u-\eps, v+\eps, w) > F(u,v,w)$ for $\eps > 0$ sufficiently small. This shows that $F(u,v,w)$ is maximized  when $v = (3/2)u$. Similarly, by considering $F(u, v-\eps, w+\eps)$ and noting that
$$
 \frac{\partial F}{\partial v}(u,v,w) = 2v < \frac{4}{3}w + u =  \frac{\partial F}{\partial w}(u,v,w),
$$
we may conclude that $F(u,v,w)$ is maximized when $w = (3/2)v$. This puts us in a situation already included in Case 1.

\subsection*{Case 5: $w \leq (3/2)u$}

In this case, we have $2/7 \leq v \leq 3/8$ and
$$
\gamma_{1,-1,\pm 3}(g) =  u^2+v^2+w^2 = v^2 + \frac{1}{2}(u+w)^2 + \frac{1}{2}(u-w)^2
$$
Since $w-u \leq (1/5)(u+w)$, we have
$$
\gamma_{1,-1,\pm 3}(g) \leq  v^2 + \frac{1}{2}(1-v)^2 + \frac{1}{50}(1-v)^2.
$$
On $v \in [2/7, 3/8]$, the right-hand side above is maximized at either of the two endpoints. After computations we see that it is maximized at $v = 2/7$ and $\gamma_{1,-1,\pm 3}(g) \leq 17/49 \approx 0.347$.

\section*{Acknowledgement}
Part of the work was done when M.W.X and S.Z
were members at SL Math 2025 Spring Extremal Combinatorics Program. We thank Jacob Fox and G\'abor Tardos for their interest. X.S. is supported by NSF grant DMS-2452462, M.W.X was supported by a Simons Junior Fellowship from the Simons Foundation.

\appendix	
	\section{Proof of Lemmas}

\begin{proof}[Proof of Lemma \ref{lemma ABAcupB}]
Write
\[
a:=|A|,\qquad b:=|B|.
\]
Since every element of $A$ is congruent to $1 \pmod 3$ and every element of $B$ is congruent to $2 \pmod 3$, every element of $3(A\cup B)$ is congruent to $3$ or $6 \pmod 9$. In particular,
\begin{equation}
\label{eq:no9}
3(A\cup B)\cap 9\mathbb Z=\varnothing.
\end{equation}

For $r\in\{1,4,7\}$ and $s\in\{2,5,8\}$, define
\[
A^{(r)}:=\{x\in A:x\equiv r \pmod 9\},\qquad
B^{(s)}:=\{y\in B:y\equiv s \pmod 9\}.
\]
Thus $\{A^{(1)},A^{(4)},A^{(7)}\}$ partitions $A$ and $\{B^{(2)},B^{(5)},B^{(8)}\}$ partitions $B$.

We say that a residue class is \emph{popular} in $A$ if it contains at least $\frac45a$ elements of $A$, and similarly for $B$.

\medskip
\noindent\textbf{Case 1: neither $A$ nor $B$ has a popular residue class.}

Then
\[
|A^{(r)}|<\frac45a\quad (r\in\{1,4,7\}),\qquad
|B^{(s)}|<\frac45b\quad (s\in\{2,5,8\}).
\]
Let $D$ denote the number of pairs $(x,y)\in A\times B$ such that $x+y\equiv 0\pmod 9$. It is clear that
\[
D=|A^{(1)}||B^{(8)}|+|A^{(4)}||B^{(5)}|+|A^{(7)}||B^{(2)}|.
\]
Every such pair satisfies $x+y\in 9\mathbb Z$, so by \eqref{eq:no9} it contributes nothing to $T(A,B,3(A\cup B))$. Hence
\begin{equation}
\label{eq:TleabminusD}
T(A,B,3(A\cup B))\le ab-D.
\end{equation}

We claim that
\[
D\ge \frac1{25}ab.
\]
Indeed, set
\[
\alpha_1:=\frac{|A^{(1)}|}{a},\quad
\alpha_2:=\frac{|A^{(4)}|}{a},\quad
\alpha_3:=\frac{|A^{(7)}|}{a},
\]
and
\[
\beta_1:=\frac{|B^{(8)}|}{b},\quad
\beta_2:=\frac{|B^{(5)}|}{b},\quad
\beta_3:=\frac{|B^{(2)}|}{b}.
\]
Then $\alpha_i,\beta_i\ge 0$, $\alpha_1+\alpha_2+\alpha_3=1$, $\beta_1+\beta_2+\beta_3=1$, and $\alpha_i,\beta_i\le \frac45$ for all $i$. Also
\[
\frac{D}{ab}=\alpha_1\beta_1+\alpha_2\beta_2+\alpha_3\beta_3.
\]
After relabelling the indices, we may assume
\[
\beta_1\le \beta_2\le \beta_3.
\]
For fixed $(\beta_1,\beta_2,\beta_3)$, the minimum of $\alpha_1\beta_1+\alpha_2\beta_2+\alpha_3\beta_3$ subject to
\[
\alpha_i\ge 0,\qquad \alpha_1+\alpha_2+\alpha_3=1,\qquad \alpha_i\le \frac45
\]
is attained at
\[
\alpha_1=\frac45,\qquad \alpha_2=\frac15,\qquad \alpha_3=0.
\]
Therefore
\[
\frac{D}{ab}\ge \frac45\beta_1+\frac15\beta_2\ge \frac15(\beta_1+\beta_2).
\]
Since $\beta_3\le \frac45$, we have $\beta_1+\beta_2=1-\beta_3\ge \frac15$, and so
\[
\frac{D}{ab}\ge \frac1{25}.
\]
This proves the claim.

Combining the claim with \eqref{eq:TleabminusD}, we obtain
\[
T(A,B,3(A\cup B))\le ab-\frac1{25}ab=\frac{24}{25}ab
\le \frac{24}{25}\cdot \frac{(a+b)^2}{4}
=\frac6{25}(a+b)^2.
\]

\medskip
\noindent\textbf{Case 2: at least one of $A$ and $B$ has a popular residue class.}

By multiplying all elements of $A\cup B$ with $2$ if needed, we may assume that $A$ has a popular residue class. Choose $r_0\in\{1,4,7\}$ such that
\[
|A^{(r_0)}|\ge \frac45a.
\]
Set
\[
A^{\star}:=A^{(r_0)},\qquad A^{\circ}:=A\setminus A^{\star},
\qquad a_{\star}:=|A^{\star}|,\qquad a_{\circ}:=|A^{\circ}|.
\]
Then
\begin{equation}
\label{eq:astar}
a_{\star}\ge \frac45a,\qquad a_{\circ}\le \frac15a.
\end{equation}

Let $s_0\in\{2,5,8\}$ be the unique residue class such that
\[
r_0+s_0\equiv 3\pmod 9.
\]
Set
\[
B^{\star}:=B^{(s_0)},\qquad B^{\circ}:=B\setminus B^{\star},
\qquad b_{\star}:=|B^{\star}|,\qquad b_{\circ}:=|B^{\circ}|.
\]

If $x\in A^{\star}$ and $y\in B^{\star}$, then $x+y\equiv 3\pmod 9$, hence any solution
\[
x+y=z\in 3(A\cup B)
\]
must satisfy $z\in 3A$. Likewise, if $x\in A^{\star}$ and $y\in B^{\circ}$, then $x+y\not\equiv 3\pmod 9$, so any solution with $z\in 3(A\cup B)$ must satisfy $z\in 3B$ (if $x+y\equiv 0\pmod 9$, then there is no solution by \eqref{eq:no9}). Therefore
\begin{equation}
\label{eq:decomp}
T(A,B,3(A\cup B))
=
T(A^{\star},B^{\star},3A)
+
T(A^{\star},B^{\circ},3B)
+
T(A^{\circ},B,3(A\cup B)).
\end{equation}
Also
\begin{equation}
\label{eq:lasttrivial}
T(A^{\circ},B,3(A\cup B))\le a_{\circ}b.
\end{equation}

We now distinguish subcases.

\medskip
\noindent\textbf{Subcase 2a: $b_{\star}>a_{\star}+a$.}

Then
\[
b\ge b_{\star}>a_{\star}+a\ge \frac45a+a=\frac95a.
\]
Hence
\[
T(A,B,3(A\cup B))\le ab\le \frac6{25}(a+b)^2,
\]
as $b\ge \frac95a\ge\frac32a$.

\medskip
\noindent\textbf{Subcase 2b: $b_{\star}\le a_{\star}+a$ and $a_{\star}\le b_{\circ}+b$.}

Then Lemma~\ref{lemma ABC} applies to both $T(A^{\star},B^{\star},3A)$ and $T(A^{\star},B^{\circ},3B)$. Thus
\[
T(A^{\star},B^{\star},3A)
\le
a_{\star}b_{\star}
-\frac14\max(a_{\star}+b_{\star}-a,0)^2
+1,
\]
and
\[
T(A^{\star},B^{\circ},3B)
\le
a_{\star}b_{\circ}
-\frac14\max(a_{\star}+b_{\circ}-b,0)^2
+1.
\]
Combining these with \eqref{eq:decomp} and \eqref{eq:lasttrivial},
\[
T(A,B,3(A\cup B))
\le
ab
-\frac14(u^2+v^2)
+2,
\]
where
\[
u:=\max(a_{\star}+b_{\star}-a,0),\qquad
v:=\max(a_{\star}+b_{\circ}-b,0).
\]
Since $b_{\star}+b_{\circ}=b$, we have
\[
(a_{\star}+b_{\star}-a)+(a_{\star}+b_{\circ}-b)=2a_{\star}-a.
\]
Hence
\[
u+v
\ge
\max\bigl((a_{\star}+b_{\star}-a)+(a_{\star}+b_{\circ}-b),\,0\bigr)
=
2a_{\star}-a.
\]
Using \eqref{eq:astar}, this gives
\[
u+v\ge 2a_{\star}-a\ge \frac35a.
\]
Therefore
\[
u^2+v^2\ge \frac12(u+v)^2\ge \frac12\left(\frac35a\right)^2=\frac{9}{50}a^2,
\]
and so
\[
T(A,B,3(A\cup B))
\le
ab-\frac14\cdot \frac{9}{50}a^2+2
=
ab-\frac{9}{200}a^2+2.
\]
Finally, it is easy to check that 
\[
ab-\frac{9}{200}a^2\le \frac6{25}(a+b)^2
.\]
Thus
\[
T(A,B,3(A\cup B))\le \frac6{25}(a+b)^2+2.
\]

\medskip
\noindent\textbf{Subcase 2c: $b_{\star}\le a_{\star}+a$ and $a_{\star}>b_{\circ}+b$.}

In this subcase Lemma~\ref{lemma ABC} still applies to $T(A^{\star},B^{\star},3A)$, so
\[
T(A^{\star},B^{\star},3A)
\le
a_{\star}b_{\star}
-\frac14\max(a_{\star}+b_{\star}-a,0)^2
+1.
\]
For the second term we use the trivial bound
\[
T(A^{\star},B^{\circ},3B)\le b_{\circ}b.
\]
Together with \eqref{eq:decomp} and \eqref{eq:lasttrivial}, this gives
\[
T(A,B,3(A\cup B))
\le
a_{\circ}b
+
a_{\star}b_{\star}
-\frac14\max(a_{\star}+b_{\star}-a,0)^2
+1
+
b_{\circ}b.
\]
Since $b_{\star}=b-b_{\circ}$ and $a_{\star} + b_{\star} - a = b_{\star} - a_{\circ}$, we obtain
\[
T(A,B,3(A\cup B))
\le
ab
-
(a_{\star}-b)b_{\circ}
-
\frac14\max(b_{\star}-a_{\circ},0)^2
+1.
\]
Discarding the nonnegative term $(a_{\star}-b)b_{\circ}$, we get
\begin{equation}
\label{eq:subcase2c-main}
T(A,B,3(A\cup B))
\le
ab
-
\frac14\max(b_{\star}-a_{\circ},0)^2
+1.
\end{equation}

Now $a_{\star}>b_{\circ}+b$ implies
\[
b_{\star}-a_{\circ}
=
b-b_{\circ}-a+a_{\star}
>
2b-a.
\]
It follows that
\[
\max(b_{\star}-a_{\circ},0)\ge \max(2b-a,0).
\]
So \eqref{eq:subcase2c-main} yields
\[
T(A,B,3(A\cup B))
\le
ab-\frac14\max(2b-a,0)^2+1.
\]
We claim that
\[
ab-\frac14\max(2b-a,0)^2\le \frac6{25}(a+b)^2.
\]
Indeed, setting $x = a / b$, this is equivalent to the elementary inequality
$$x - \frac{1}{4}\max\left(2 - x, 0\right)^{2} \leq \frac{6}{25}\left(x + 1\right)^{2}$$
which holds for any $x \in \mathbb{R}$. This completes the proof.
\end{proof}

\begin{proof}[Proof of Lemma~\ref{lem 3.4}]
We write
$$
n:=|S|,\qquad c:=|S_0|,\qquad s:=|S_1|+|S_2|=n-c.
$$
Recall that

$$T(S, S, 3 \cdot S) = 2 T(S_1, S_2, 3 \cdot S) + T(S_0, S_0, 3 \cdot (S_1 \cup S_2)) + T(S_0, S_0, 3 \cdot S_0).
$$
Bounding the third term using \cref{lem:half-bound}, we get
\begin{equation}
\label{eq:full}T(S, S, 3 \cdot S) \leq 2 T(S_1, S_2, 3 \cdot S) + T(S_0, S_0, 3 \cdot (S_1 \cup S_2)) + \frac{1}{2} |S_0|^2 + 1.
\end{equation}
We bound the middle term by Lemma~\ref{lemma ABC} with $A=B=S_0$ and $C=3(S_1\cup S_2)$ (so $|C|=|S_1\cup S_2|=s$):
\begin{equation}
\label{eq:middle}
T(S_0,S_0,3 \cdot (S_1\cup S_2))
\le c^2-\frac14\max(2c-s,0)^2+1
=c^2-\frac14\max(3c-n,0)^2+1.
\end{equation}

For the first term we have two bounds.
First, we have the trivial bound
\begin{equation}
\label{eq:trivial-first}
T(S_1,S_2,3S)\le |S_1||S_2| \leq \frac{s^2}{4}.
\end{equation}
Second, since $3S=3(S_1\cup S_2)\,\dot\cup\, 3S_0$, we have
\[
T(S_1,S_2,3S)=T(S_1,S_2,3(S_1\cup S_2))+T(S_1,S_2,3S_0).
\]
By Lemma~\ref{lemma ABAcupB},
\[
T(S_1,S_2,3(S_1\cup S_2))\le \frac{6}{25}s^2+2.
\]
Moreover, for each fixed $z\in 3S_0$ the number of pairs $(x,y)\in S_1\times S_2$ with $x+y=z$ is at most
$\min(|S_1|,|S_2|)\le \frac{s}{2}$, hence
\[
T(S_1,S_2,3S_0)\le |3S_0|\cdot \frac{s}{2}=c\cdot \frac{s}{2}.
\]
Therefore
\begin{equation}
\label{eq:aba-first}
2T(S_1,S_2,3S)\le \frac{12}{25}s^2+cs+4.
\end{equation}

We now split according to the size of $s$.

\medskip
\noindent\textbf{Case 1: $s\le \frac{50}{51}n$ (equivalently $c\ge \frac{n}{51}$).}
Use \eqref{eq:trivial-first} for the first term and \eqref{eq:middle} for the middle term in \eqref{eq:full} to get
\[
T(S,S,3S)\le \frac{s^2}{2}+\left(c^2-\frac14\max(3c-n,0)^2+1\right)+\frac12c^2+1
= \frac{s^2}{2}+\frac32c^2-\frac14\max(3c-n,0)^2+2.
\]
Write $x:=c/n\in\left[\frac{1}{51},\frac{2}{3}\right]$ and note $s=(1-x)n$.
If $x\le \frac13$, then $\max(3x-1,0)=0$ and
\[
\frac{1}{n^2}T(S,S,3S)\le \frac12(1-x)^2+\frac32x^2+\frac{2}{n^2}
=\frac12-x+2x^2+\frac{2}{n^2},
\]
which is maximized at an endpoint and it follows that  $\frac{1}{n^2}T(S,S,3S)\le \frac{2503}{5202}+\frac{2}{n^2}$.
If instead $x\ge \frac13$, then $\max(3x-1,0)=3x-1$ and a direct simplification yields
\[
\frac12(1-x)^2+\frac32x^2-\frac14(3x-1)^2
=\frac12-\frac14(1-x)^2,
\]
which is increasing in $x$ on $[1/3,2/3]$, so its maximum there is at $x=2/3$ and equals $\frac{17}{36}<\frac{2503}{5202}$.
Thus in Case~1,
\begin{equation}
\label{eq:case1}
T(S,S,3S)\le \frac{2503}{5202}n^2+2.
\end{equation}

\medskip
\noindent\textbf{Case 2: $s>\frac{50}{51}n$ (equivalently $c<\frac{n}{51}$).}
Here $x=c/n<1/51<1/3$, so $\max(3c-n,0)=0$ and \eqref{eq:middle} gives
$T(S_0,S_0,3(S_1\cup S_2))\le c^2+1$.
Using \eqref{eq:aba-first} and \eqref{eq:full} we obtain
\[
T(S,S,3S)\le \left(\frac{12}{25}s^2+cs+4\right)+(c^2+1)+\frac12c^2+1
= \frac{12}{25}s^2+cs+\frac32c^2+6.
\]
Again $s=(1-x)n$ and dividing by $n^2$ the quadratic becomes
\[
\frac{12}{25}(1-x)^2+x(1-x)+\frac32x^2
=\frac{12}{25}+\frac{1}{25}x+\frac{49}{50}x^2,
\]
which is increasing for $x\ge 0$; hence on $[0,1/51]$ it is maximized at $x=1/51$ which leads to that $\le \frac{2503}{5202}n^2+6.$

Combining with \eqref{eq:case1}, we obtain for all winnable $S$ that
\[
T(S,S,3S)\le \frac{2503}{5202}n^2+6 = \left(\frac12-\frac{49}{2601}\right)|S|^2+6,
\]
as claimed (take $\varepsilon=\frac{49}{2601}$).
\end{proof}

\bibliographystyle{plain}
\bibliography{solution}{}

@article{Aa,
  title={Maximising the number of solutions to a linear equation in a set of integers},
  author={Aaronson, James},
  journal={Bulletin of the London Mathematical Society},
  volume={51},
  number={2},
  pages={311--326},
  year={2019},
  publisher={Wiley Online Library}
}

@article{lev1998number,
  title={On the number of solutions of a linear equation over finite sets},
  author={Lev, Vsevolod F.},
  journal={Journal of Combinatorial Theory, Series A},
  volume={83},
  number={2},
  pages={251--267},
  year={1998},
  doi={10.1006/jcta.1998.2878}
}

@book{hardy1952inequalities,
  title={Inequalities},
  author={Hardy, G. H. and Littlewood, J. E. and P{\'o}lya, G.},
  year={1952},
  publisher={Cambridge University Press}}

@article{hardy1928notes,
  title={Notes on the theory of series (VIII): An inequality},
  author={Hardy, G. H. and Littlewood, J. E.},
  journal={Journal of the London Mathematical Society},
  volume={1},
  number={3},
  pages={105--110},
  year={1928}
}

@article{gabriel1931rearrangement,
  title={The rearrangement of positive Fourier coefficients},
  author={Gabriel, R. M.},
  journal={Proceedings of the London Mathematical Society},
  volume={2},
  number={33},
  pages={32--51},
  year={1931}
}

@article{green2008maximal,
  title={On the maximal number of 3-term arithmetic progressions in subsets of $\mathbb{Z}/N\mathbb{Z}$},
  author={Green, Ben and Sisask, Olof},
  journal={Bulletin of the London Mathematical Society},
  volume={40},
  number={6},
  pages={945--955},
  year={2008},
  doi={10.1112/blms/bdn074}
}

@article{lev2014solving,
  title   = {Solving $a \pm b = 2c$ in elements of finite sets},
  author  = {Lev, Vsevolod F. and Pinchasi, Rom},
  journal = {Acta Arithmetica},
  volume  = {163},
  number  = {2},
  pages   = {127--140},
  year    = {2014},
  doi     = {10.4064/aa163-2-2}
}

@article {GreenRuzsa,
    AUTHOR = {Green, B. and Ruzsa, I. Z.},
     TITLE = {Sets with small sumset and rectification},
   JOURNAL = {Bull. London Math. Soc.},
  FJOURNAL = {The Bulletin of the London Mathematical Society},
    VOLUME = {38},
      YEAR = {2006},
    NUMBER = {1},
     PAGES = {43--52},
      ISSN = {0024-6093,1469-2120},
   MRCLASS = {11B75},
  MRNUMBER = {2201602},
MRREVIEWER = {Mei\ Chu\ Chang},
       DOI = {10.1112/S0024609305018102},
       URL = {https://doi-org.ezproxy.uky.edu/10.1112/S0024609305018102},
}

@article{Eberhard-note,
      title={The abelian arithmetic regularity lemma}, 
      author={Eberhard, S.},
      year={2016},
      note={arXiv:1606.09303},
}

@incollection {GreenTao-regularity,
    AUTHOR = {Green, B. and Tao, T.},
     TITLE = {An arithmetic regularity lemma, an associated counting lemma,
              and applications},
 BOOKTITLE = {An irregular mind},
    SERIES = {Bolyai Soc. Math. Stud.},
    VOLUME = {21},
     PAGES = {261--334},
 PUBLISHER = {J\'anos Bolyai Math. Soc., Budapest},
      YEAR = {2010},
      ISBN = {978-963-9453-14-2; 978-3-642-14443-1},
   MRCLASS = {11B30 (05D05)},
  MRNUMBER = {2815606},
MRREVIEWER = {David\ Conlon},
       DOI = {10.1007/978-3-642-14444-8\_7},
       URL = {https://doi-org.ezproxy.uky.edu/10.1007/978-3-642-14444-8_7},
}

@article {EberhardGreenManners,
    AUTHOR = {Eberhard, S. and Green, B. and Manners, F.},
     TITLE = {Sets of integers with no large sum-free subset},
   JOURNAL = {Ann. of Math. (2)},
  FJOURNAL = {Annals of Mathematics. Second Series},
    VOLUME = {180},
      YEAR = {2014},
    NUMBER = {2},
     PAGES = {621--652},
      ISSN = {0003-486X,1939-8980},
   MRCLASS = {11B30},
  MRNUMBER = {3224720},
MRREVIEWER = {Olof\ Sisask},
       DOI = {10.4007/annals.2014.180.2.5},
       URL = {https://doi-org.ezproxy.uky.edu/10.4007/annals.2014.180.2.5},
}

@book {TaoVu-book,
    AUTHOR = {Tao, T. and Vu, V.},
     TITLE = {Additive combinatorics},
    SERIES = {Cambridge Studies in Advanced Mathematics},
    VOLUME = {105},
 PUBLISHER = {Cambridge University Press, Cambridge},
      YEAR = {2006},
     PAGES = {xviii+512},
      ISBN = {978-0-521-85386-6; 0-521-85386-9},
   MRCLASS = {11-02 (05-02 05D10 11B13 11P70 11P82 28D05 37A45)},
  MRNUMBER = {2289012},
MRREVIEWER = {Serge\u i\ V.\ Konyagin and Ilya\ D.\ Shkredov},
       DOI = {10.1017/CBO9780511755149},
       URL = {https://doi-org.ezproxy.uky.edu/10.1017/CBO9780511755149},
}

@misc{Korsky,
  author       = {Korsky, S.},
  title        = {Affine copies of three-point patterns in sets of integers},
  year         = {2026},
  eprint       = {2609.02308},
  archivePrefix = {arXiv},
  primaryClass = {math.CO},
  url          = {https://arxiv.org/abs/2609.02308}
}
\end{document}